\documentclass[12pt,reqno]{amsart}
\usepackage{amssymb, a4wide, amsmath, mathtools,xy}
\xyoption{all}
\usepackage{latexsym,pdfsync,xcolor,graphicx}
\usepackage{multirow}

\usepackage[T1]{fontenc}

\usepackage[utf8]{inputenc}

\usepackage{array}
\usepackage{upgreek}
\usepackage{pstricks-add}
\usepackage{enumerate}
\usepackage{orcidlink}

\usepackage[english]{babel}	
\usepackage{comment}
\allowdisplaybreaks

\usepackage{pgf,tikz}
\usepackage{wrapfig}
\usetikzlibrary{arrows}

\usepackage{float}
\usepackage{appendix}

\usepackage{hyperref}
\usepackage{rotating}

\theoremstyle{plain}%
\newtheorem{theorem}{Theorem}[section]% meant for sectionwise numbers
%% optional argument [theorem] produces theorem numbering sequence instead of independent numbers for Proposition
\newtheorem{proposition}[theorem]{Proposition}%
\newtheorem{lemma}[theorem]{Lemma}%
\newtheorem{corollary}[theorem]{Corollary}%

\theoremstyle{definition}
\newtheorem{definition}[theorem]{Definition}%
\newtheorem{example}[theorem]{Example}%

\theoremstyle{remark}
\newtheorem{remark}[theorem]{Remark}
%\def\two{\begin{matrix}\rra\\[-3mm]\rra\end{matrix}}
%\def\atwo#1#2{\begin{matrix}&\ovs{#1}{\rra}\\[-3mm]&\uns{#2}{\rra} \end{matrix}}
%\def\three{\begin{matrix}\rra\\[-2mm]\rra\\[-2mm]\rra\end{matrix}}
%\def\athree#1#2{\begin{matrix}\ovs{#1}{\rra}\\[-2mm]\rra\\[-2mm]\uns{#2}{\rra}\end{matrix}}
%\def\four{\begin{matrix}\rra\\[-2mm]\rra\\[-2mm]\rra\\[-2mm]\rra\end{matrix}}
%\def\dtwo{\begin{matrix}\rra\\[-3.5mm]\vdots\\[-3mm]\rra\end{matrix}}
%\def\kuku{\begin{matrix}\ar[r]\\[-3.5mm]\vdots\\[-3mm]\ar[r]\end{matrix}}
%\def\adtwo#1#2{\begin{matrix}&\ovs{#1}{\rra}\\[-3.5mm]&\vdots\\[-3mm]&\uns{#2}{\rra}\end{matrix}}
%\def\twod{\begin{matrix}&\overset{d_0^1}{\rra}\\[-3mm]&\underset{d_1^1}{\rra} \end{matrix}}
%\def\twodt{\begin{matrix}&\overset{d_0^1\otimes 1_N}{\rra}\\[-3mm]&
		%\underset{d_1^1\otimes 1_N}{\rra} \end{matrix}}
%\def\twodtpr{\begin{matrix}&\overset{d_0^1\otimes 1_{N'}}{\rra}\\[-3mm]&
		%\underset{d_1^1\otimes 1_{N'}}{\rra} \end{matrix}}
%\def\twol{\begin{matrix}&\overset{l_0}{\rra}\\[-3mm]&\underset{l_1}{\rra} \end{matrix}}
%%%\def\three{\begin{matrix}\longrightarrow \\[-2mm]\longrightarrow \\[-2mm]\longrightarrow \end{matrix}}
%\def\threed{\begin{matrix}\overset{d_0^2}{\rra}\\[-2mm]\rra \\[-2mm]\underset{d_2^2}\rra \end{matrix}}
%\def\threedt{\begin{matrix}\overset{d_0^2\otimes 1_N}{\rra}\\[-2mm]\rra \\[-2mm]
		%\underset{d_2^2\otimes 1_N}\rra \end{matrix}}
%\def\dtwod{\begin{matrix}\overset{d_0^k}{\rra}\\[-2mm]\vdots
		%\\[-2mm]\underset{d_k^k}{\rra}\end{matrix}}
%\numberwithin{equation}{section}

\DeclareMathOperator{\ann}{ann}
\DeclareMathOperator{\spa}{\textup{\textsf{span}}}

\DeclareMathOperator{\rank}{rank}
\DeclareMathOperator{\supp}{supp}

\DeclareMathOperator{\codim}{codim}

\providecommand{\abs}[1]{\lvert#1\rvert}

\numberwithin{equation}{section}
\pagenumbering{arabic}

\title[On the subalgebra lattice of solvable evolution algebras]{On the subalgebra lattice of solvable evolution algebras}

\author[M. Ladra]{Manuel Ladra\textsuperscript{1}\,\orcidlink{0000-0002-0543-4508}}
\address{\textsuperscript{1}Department of Mathematics \& CITMAga, Universidade de Santiago de Compostela, 15782 Santiago de Compostela, Spain}
\email{manuel.ladra@usc.es}

\author[P. Páez-Guillán]{Pilar Páez-Guillán\textsuperscript{2}\,\orcidlink{0000-0003-2761-7505}}
\address{\textsuperscript{2}Department of Mathematics, Universidade de Santiago de Compostela, 15782 Santiago de Compostela, Spain}
\email{ppaez51@gmail.com}

\author[A. Pérez-Rodríguez]{Andrés Pérez-Rodríguez\textsuperscript{3}\,\orcidlink{0009-0007-1095-5328}}
\address{\textsuperscript{3}Department of Mathematics \& CITMAga, Universidade de Santiago de Compostela, 15782 Santiago de Compostela, Spain}
\email{andresperez.rodriguez@usc.es}

\subjclass{17D92, 17A60, 06C05, 06C10, 06D05, 17B30}

\keywords{Evolution algebras, solvable  evolution algebras, subalgebra lattice, distributive lattice, modular lattice, semimodular lattice}

\begin{document}
	
	\begin{abstract}
		The main objective of this paper is to study the relationship between a solvable evolution algebra and its subalgebra lattice, emphasizing two of its main properties: distributivity and modularity. First, we will focus on the nilpotent case, where distributivity is characterised, and a necessary condition for modularity is deduced. Subsequently, we comment on some results for solvable non-nilpotent evolution algebras, finding that the ones with maximum index of solvability have the best properties. Finally, we characterise modularity in this particular case by introducing supersolvable evolution algebras and computing the terms of the derived series.  
	\end{abstract}

\maketitle

\section{Introduction}
Lattice theories have been developed in several algebraic structures by mainly working with two lattice-theoretic classical properties defined by identities: \textit{distributivity} and \textit{modularity}. By a well-known theorem of Birkhoff (\cite[Theorem 2.1.1]{Gr_98}), a lattice is distributive if and only if it does not contain a diamond or a pentagon as a sublattice. 
Similarly, Dedekind characterised a modular lattice if and only if it does not contain a pentagon as a sublattice (see \cite[Theorem 2.1.2]{Gr_98}). 
Particularly, the relationship between the properties of a group and the structure of its subgroup lattice has been deeply studied and found interesting in group theory (see \cite{Sc_94}). Ore's Theorem, which states that the subgroup lattice of a group $G$ is distributive if and only if $G$ is locally cyclic or the fact that the lattice of normal subgroups (and consequently the subgroup lattice of an abelian group) is modular highlight such a strong connection and make the case of groups especially representative. 

Along the same line, the lattice of right ideals of a ring (see \cite{Br_76}) and the lattice of submodules of a module (see \cite{Ca_75}) have also been investigated. Moreover, the lattice of ideals of an associative algebra has been addressed in \cite{Ja_57} due to its importance in representation theory. Last but not least, this kind of studies has also been carried out in non-associative structures such as Lie (see \cite{Ge_76,Ko_65}) or Leibniz (see \cite{ST_22}) algebras, and even in the context of restricted Lie algebras (see \cite{MPS_21,PST_23}). However, this relationship is not well known in genetic algebras (see \cite{MS_00}) and is completely unsettled in the particular case of evolution algebras. 

Evolution algebras are commutative non-associative structures which appeared for the first time in $2006$ (see \cite{TV_06}). In this paper, J. P. Tian and P.  Vojt\v{e}chovsk\'{y} showed how the multiplication of an evolution algebra mimics the self-reproduction rule of non-Mendelian genetics.
Two years later, J. P. Tian (\cite{Tian_08}) published a new monograph in which the properties and biological applications of the evolution algebras are studied in more detail.
Despite being an emerging research topic, numerous connections between evolution algebras and other fields of mathematics have been explored. For instance, \textit{graph theory} turns out to be a useful tool when studying the properties of evolution algebras (see \cite{EL_15,EL_21}). In addition, the relationship between evolution algebras and \textit{Markov chains} has been studied (see \cite{Pa_21}) and, in the context of \textit{group theory}, the question of whether any finite group may be realised as the automorphism group of an evolution algebra has also been developed (see \cite{CLTV_21,CMTV_23}).

Hence, the motivation for this paper is to establish such relations between evolution algebras and \textit{lattice theory}, particularly the investigation of how far the properties of an evolution algebra determine those of its subalgebra lattice and vice versa. Mainly, we centre our attention on solvable evolution algebras, in which distributivity and modularity will be studied by using, inter alia, the notion of quasi-ideal, supersolvability and the terms of the derived series.

The text is structured into six sections. After this introductory one, Section~\ref{preliminaries} is devoted to preliminaries. We recall some concepts on general evolution algebras and then we focus on the solvable case by reviewing nilpotent evolution algebras and introducing two wide families that contain solvable non-nilpotent evolution algebras. Finally, we revise the necessary lattice-theoretical framework for the development of the paper.

Sections~\ref{maxindexsolv} and \ref{supersoluble} pioneer the treatment of solvable evolution algebras with maximum index of solvability and supersolvable evolution algebras, respectively. In the first one, we characterise solvable evolution algebras with maximum index of solvability in terms of their derived subalgebras. Furthermore, we present several results about their subalgebras, which will be instrumental when studying their subalgebra lattice. In the second one, we show that supersolvability is a sufficient condition for lower semimodularity, and we characterise it in solvable evolution algebras.

In Section~\ref{nilpotentcase}, we focus on the nilpotent case. As the structure matrix of a nilpotent evolution algebra is strictly triangular, we characterise distributivity in terms of its index of nilpotency. Moreover, we deduce a necessary condition for modularity, which turns out to be a characterisation under an easily checkable additional hypothesis related with the annihilator of the algebra.

Finally, Section~\ref{solvablecase} is devoted to studying the subalgebra lattice of solvable non-nilpotent evolution algebras. We finish the paper by characterising the modularity of solvable evolution algebras with maximum index of solvability by using supersolvability.

\section{Background on evolution algebras and lattice theory}\label{preliminaries}
First, we establish some basic notations that will be useful throughout this paper. $\mathbb{K}$ will denote an arbitrary field, and $\mathbb{K}^*$ will stand for $\mathbb{K}\backslash\{0\}$. Let $\mathcal{A}$ be a $\mathbb{K}$-algebra and $S\subseteq\mathcal{A}$ a subset. We will use $\spa\{S\}$ to denote the $\mathbb{K}$-linear span of $S$ and the symbol $+$ for sums of vector-space structures. At the same time, $\langle S\rangle$ will stand for the subalgebra of $\mathcal{A}$ generated by $S$. Particularly, if a subspace is closed for the product, then it will be a subalgebra.

\subsection{Generalities about evolution algebras}
An \textit{evolution algebra} over a field $\mathbb{K}$ is a $\mathbb{K}$-algebra $\mathcal{E}$ which admits a basis $B=\{e_1,\dots,e_n, \dots\}$ such that $e_ie_j=0$ for any $i\neq j$. Such a basis is called a \textit{natural basis}.  Notice that all the evolution algebras considered in this work will be finite dimensional, i.e. $B$ will be a finite set. Fixed a natural basis $B$ in $\mathcal{E}$, the scalars $a_{ik}\in\mathbb{K}$ such that $e_i^2=\sum_{k=1}^na_{ik}e_k$ will be called \textit{structure constants of $\mathcal{E}$ relative to $B$}, and the matrix $M_B(\mathcal{E})=(a_{ik})$ is said to be the \textit{structure matrix of $\mathcal{E}$ relative to $B$}.

The annihilator of an evolution algebra $\mathcal{E}$ is characterised by \cite[Proposition 1.5.3]{Ca_16}, $\ann(\mathcal{E})=\spa\{e_i\in B\colon e_i^2=0\}$. If $\ann(\mathcal{E})=0$, we say that $\mathcal{E}$ is \textit{non-degenerate}. Otherwise, we say that $\mathcal{E}$ is \textit{degenerate}. Given an element $u=\sum_{i=1}^{n}\mu_ie_i\in\mathcal{E}$ we define its \textit{support relative to $B$} as $\supp_B(u)\coloneqq\{i\in\{1,\dots,n\}\colon \mu_i\neq0\}$. Moreover, an element $u\in\mathcal{E}$ is called \textit{idempotent} if $u^2=u$ and \textit{absolute nilpotent} if $u^2=0$.  Finally, recall that an ideal $I$ of $\mathcal{E}$ is called a \textit{basic ideal of $\mathcal{E}$ relative to $B$} if it admits a natural basis consisting of vectors from $B$.

Knowing that evolution algebras are closed with respect to quotients by ideals (see~\cite[Lemma 1.4.11]{Ca_16}), the following remark will be useful in the end of this paper.

\begin{remark}\label{rem:quo_bas_i}
	Let $\mathcal{U}$ be a subalgebra of an evolution algebra $\mathcal{E}$ and $I$ a basic ideal of $\mathcal{E}$ such that $I\subset U$. Then, it is easy to check that $U/I$ is a basic ideal of $\mathcal{E}/I$ if and only if $U$ is a basic ideal of $\mathcal{E}$.
\end{remark}

We focused our investigation on the nilpotent and solvable cases. Given an evolution algebra $\mathcal{E}$, we define the following sequences of subalgebras:
\begin{align*}
	\mathcal{E}^1&=\mathcal{E},\qquad  \quad  \ \ \mathcal{E}^{k+1}=\sum_{i=1}^k\mathcal{E}^i\mathcal{E}^{k+1-i};	\\
	\mathcal{E}^{(1)}&=\mathcal{E},\qquad \quad  \mathcal{E}^{(k+1)}=\mathcal{\mathcal{E}}^{(k)}\mathcal{E}^{(k)}.
\end{align*}
An evolution algebra $\mathcal{E}$ is called \textit{nilpotent} if there exists $n\in\mathbb{N}$ such that $\mathcal{E}^n=0$ (and the minimal such number is called \textit{index of nilpotency}), and \textit{solvable} if there exists $n\in\mathbb{N}$ such that $\mathcal{E}^{(n)}=0$ (and the minimal such number is called \textit{index of solvability}). As in Lie algebras, the sequence $0=\mathcal{E}^{(n)}\subsetneq\mathcal{E}^{(n-1)}\subsetneq\dots\subsetneq\mathcal{E}^{(2)}\subsetneq\mathcal{E}$ will be called the \textit{derived series of $\mathcal{E}$} and, particularly, $\mathcal{E}^{(2)}$ will be called the \textit{derived subalgebra of $\mathcal{E}$}. Moreover, as introduced in \cite{FGN_22}, we define the \textit{principal powers} of an element $u\in\mathcal{E}$, recursively, as $u^1=u$, $u^n=u^{n-1}u$; and we define its \textit{plenary powers} as $u^{(0)}=u$, $u^{(n)}=u^{(n-1)}u^{(n-1)}$.

Recall that every nilpotent evolution algebra is also solvable but the converse is not true in general (\cite[Example 2.8]{CGOT_13}). A key feature of nilpotent evolution algebras is that their structure matrix can be assumed to be strictly (upper or lower) triangular by~\cite[Theorem~2.7]{CLOR14}.
Unlike the nilpotent case, there is no characterisation for solvable evolution algebras in terms of their structure matrix. Consequently, we will define two families of solvable evolution algebras that will be instrumental in the study of their subalgebra lattice. Notice that both families contain non-nilpotent evolution algebras.

The first family has been originally introduced in \cite{CGOT_13}.
Let $\mathbb{K}$ be a field of characteristic  different from $2$ and $V$ a vector space over $\mathbb{K}$ with basis $\{e_1,\dots,e_n\}$. Given scalars $\lambda_1,\dots,\lambda_n\in\mathbb{K}$ such that $\sum_{j=1}^{n}\abs{\lambda_j}\neq0$ and $\sum_{j=1}^{k}\lambda_j=0$ for a $k\in\{1,\dots,n\}$, we define the evolution algebra 
\begin{align}\label{def:solv_1}
	\mathcal{E}_k(\lambda_1,\dots,\lambda_n)\colon e_i^2=\lambda_i(e_1+\dots+e_k),\text{ for any } i\in\{1,\dots,n\}.
\end{align}

As explained in \cite[Remark 2.6]{CGOT_13}, the evolution algebras described in \eqref{def:solv_1} can be divided in two disjoint classes. First, when $\lambda_i=0$ for all $1\leq i\leq k$, this evolution algebra is nilpotent. The second one is when $\lambda_i\neq0$ for some $1\leq i\leq k$. Then, by a natural basis transformation, one can assume that $e_1^2=e_1+\dots+e_k$ and hence, these evolution algebras are not nilpotent.

The importance of this type of solvable algebras lies in the fact that every solvable algebra whose derived subalgebra is one-dimensional is isomorphic to one of them \cite[Proposition 2.5]{CGOT_13}.
Let $\mathcal{E}$ be an $n$-dimensional evolution algebra such that $\dim{\mathcal{E}^{(2)}}=1$. Then $\mathcal{E}^{(3)}=0$ if and only if $\mathcal{E}\cong\mathcal{E}_k(\lambda_1,\dots,\lambda_n)$ for some scalars $\lambda_1,\dots,\lambda_n\in\mathbb{K}$ and $k\in\mathbb{N}$.

Next, we introduce a new family of solvable evolution algebras which generalises~\eqref{def:solv_1}.
\begin{definition}\label{def:solv_2}
	Let $\mathcal{E}_1,\dots,\mathcal{E}_r$ be evolution algebras of the type described in \eqref{def:solv_1} with natural bases $B_1,\dots,B_r$, respectively. Suppose that $r\geq1$. Then, it will be said that an evolution algebra $\mathcal{E}$ belongs to the family $\mathcal{F}(\mathcal{E}_1,\dots,\mathcal{E}_r)$ if there exists a natural basis $B$ such that $\cup_{i=1}^rB_i\subseteq B$ and
	\begin{align}\label{matriz_solv}
		M_{B}(\mathcal{E})=\begin{pmatrix*}[l] A_{m\times m}&0_{m\times(n-m)}\\C_{(n-m)\times m}&L_{(n-m)\times(n-m)} \end{pmatrix*},
	\end{align}
	where $m=\abs{\cup_{i=1}^rB_i}$, $A$ is block diagonal (in which each block is $M_{B_i}(\mathcal{E}_i)$ for $1\leq i\leq r$), $L$ is strictly lower triangular and $C$ can be any matrix. Notice that $\oplus_{i=1}^r\mathcal{E}_i\subseteq\mathcal{E}$ is an ideal, and the quotient $\mathcal{E}/(\oplus_{i=1}^r\mathcal{E}_i)$ is a nilpotent evolution algebra.
\end{definition}
The following lemma shows that the solvable evolution algebras $\mathcal{E}_1,\dots,\mathcal{E}_r$ can be assumed to be non-nilpotent without loss of generality.
\begin{lemma}\label{lem:spg_mod}
	Let $\mathcal{E}_1,\dots,\mathcal{E}_r$ be evolution algebras of the type described in Definition~\ref{def:solv_1}. If for some $i\in\{1,\dots,r\}$, $\mathcal{E}_i$  is nilpotent, then $\mathcal{F}(\mathcal{E}_1,\dots,\mathcal{E}_r)\subset\mathcal{F}(\mathcal{E}_1,\dots,\widehat{\mathcal{E}_i},\dots,\mathcal{E}_r)$.
\end{lemma}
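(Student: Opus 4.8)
The plan is to establish the inclusion in two stages: first the containment $\mathcal{F}(\mathcal{E}_1,\dots,\mathcal{E}_r)\subseteq\mathcal{F}(\mathcal{E}_1,\dots,\widehat{\mathcal{E}_i},\dots,\mathcal{E}_r)$, and then its strictness. The guiding idea is that, since $\mathcal{E}_i$ is nilpotent, its diagonal block in \eqref{matriz_solv} is already strictly lower triangular and can therefore be absorbed into the strictly lower triangular tail $L$, at the cost of one fewer solvable summand.

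For the containment I would fix an arbitrary $\mathcal{E}\in\mathcal{F}(\mathcal{E}_1,\dots,\mathcal{E}_r)$ with a natural basis $B$ realising \eqref{matriz_solv}, writing $I_s$ for the indices of each $B_s$ and $I_0$ for the indices governed by $L$. Direct inspection of \eqref{def:solv_1} in the nilpotent case (where $\lambda_j=0$ for $j\le k$) shows that $M_{B_i}(\mathcal{E}_i)$ is strictly lower triangular---alternatively this is \cite[Theorem~2.7]{CLOR14}. I would then reorder $B$ by listing first the indices $I_s$ with $s\neq i$ and then the indices $I_i\cup I_0$, setting $A'=\operatorname{diag}(M_{B_s}(\mathcal{E}_s))_{s\neq i}$ and letting $L'$ be the trailing block indexed by $I_i\cup I_0$. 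It then remains to check the four resulting blocks: the new upper-right block vanishes because $e_j^2$ is supported in $I_s$ for every $j\in I_s$ with $s\neq i$; the new $C'$ is unconstrained; and $L'$ has the shape $\left(\begin{smallmatrix} M_{B_i}(\mathcal{E}_i) & 0 \\ * & L \end{smallmatrix}\right)$, with strictly lower triangular diagonal blocks, zero upper-right block, and an arbitrary lower-left block lying below the diagonal. Hence $L'$ is strictly lower triangular, which places $\mathcal{E}$ in the smaller-index family.

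The main---though mild---obstacle is the bookkeeping of the block decomposition of $L'$, and in particular confirming that its upper-right $(I_i,I_0)$ block is genuinely zero; this is precisely where one uses that $\oplus_{s=1}^r\mathcal{E}_s$ is an ideal, equivalently the vanishing of the top-right block of \eqref{matriz_solv} restricted to $I_i\times I_0$.

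Finally, for strictness I would exhibit a witness lying in the larger family but not the smaller. The algebra $\mathcal{E}'=\oplus_{s\neq i}\mathcal{E}_s$, taken with empty nilpotent tail, clearly belongs to $\mathcal{F}(\mathcal{E}_1,\dots,\widehat{\mathcal{E}_i},\dots,\mathcal{E}_r)$, yet $\dim\mathcal{E}'=m-\dim\mathcal{E}_i<m=\abs{\cup_{s=1}^rB_s}$ since $\dim\mathcal{E}_i\ge1$. Because every member of $\mathcal{F}(\mathcal{E}_1,\dots,\mathcal{E}_r)$ has dimension at least $m$ (its natural basis must contain $\cup_{s=1}^rB_s$), and dimension is basis-independent, we conclude $\mathcal{E}'\notin\mathcal{F}(\mathcal{E}_1,\dots,\mathcal{E}_r)$. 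This yields the strict inclusion, under the standing assumption $r\ge2$ that both families are defined.
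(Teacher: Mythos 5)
The paper states this lemma without proof, so there is no argument of its own to compare against; judged on its merits, your proof is correct and is surely the intended one. The key observation --- that in the nilpotent case of \eqref{def:solv_1} one has $\lambda_j=0$ for $j\le k$, so that $M_{B_i}(\mathcal{E}_i)$ is strictly lower triangular as it stands (rows $1,\dots,k$ vanish and the remaining rows are supported in columns $1,\dots,k$) and can therefore be absorbed into the tail $L$ after moving the indices of $B_i$ just ahead of those of $L$ --- is exactly what makes the containment work, and your block-by-block verification is complete, including the one point that genuinely needs checking, namely that the $(I_i,I_0)$ block of $L'$ vanishes because $e_j^2\in\spa\{B_i\}$ for $j\in I_i$. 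Your strictness witness $\oplus_{s\neq i}\mathcal{E}_s$ with the dimension count, and the explicit caveat that $r\ge2$ is needed for the right-hand family to be defined under Definition~\ref{def:solv_2}, correctly settle the details that the paper glosses over.
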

Finally, it is worth noting  that Definition~\ref{def:solv_2} describes a quite varied family of solvable evolution algebras since we can find solvable evolution algebras of any dimension and with any index of solvability. For instance, let $\mathcal{E}$ be an evolution algebra with natural basis $\{e_1,\dots,e_n\}$ and the following multiplication:
\[e_1^2=-e_2^2=e_1+e_2\qquad\text{and}\qquad e_i^2=e_{i-1}, \ \text{ for all } \ 3\leq i\leq k\leq n.\] 
$\mathcal{E}$ is a solvable non-nilpotent evolution algebra with index of solvability $k+1$.

\subsection{Basic ideas on lattice theory}
For the reader's convenience, we now recall  some basic lattice-theoretical definitions that will appear in our study, already in the framework of evolution algebras. Let $\mathcal{E}$ be an evolution algebra. We say that $\mathcal{E}$ is \textit{distributive} if, for all subalgebras $U,V,W$ of $\mathcal{E}$, one has $\langle U,V\cap W\rangle=\langle U, V\rangle\cap\langle U, W\rangle$, and that $\mathcal{E}$ is \textit{modular} if $\langle U,V\cap W\rangle=\langle U, V\rangle\cap W$ for all subalgebras $U,V,W$ of $\mathcal{E}$ with $U\subseteq W$. Notice that every distributive lattice is also modular, but the converse is not necessarily true. 
We say that a subalgebra $U$ of $\mathcal{E}$ is \textit{upper semimodular} in $\mathcal{E}$ if $U$ is maximal in $\langle U, V\rangle$ for every subalgebra $V$ of $\mathcal{E}$ such that $U\cap V$ is maximal in $V$, and that $U$ is \textit{lower semimodular} in $\mathcal{E}$ if $U\cap V$ is maximal in $U$ for every subalgebra $V$ of $\mathcal{E}$ such that $V$ is maximal in $\langle U, V\rangle$. The algebra $\mathcal{E}$ is called \textit{upper semimodular} (resp. \textit{lower semimodular}) if all of its subalgebras are upper semimodular (resp. lower semimodular) in $\mathcal{E}$. Notice that if $\mathcal{E}$ is modular, then it is upper and lower semimodular.

If $U,V$ are subalgebras of $\mathcal{E}$ with $U\subseteq V$, a \textit{$J$-series} (or \textit{Jordan-Dedekind series}) for $(U,V)$ is a series $U=U_0\subsetneq U_1\subsetneq\dots\subsetneq U_r=V$ of subalgebras such that $U_i$ is a maximal subalgebra of $U_{i+1}$ for $0\leq i\leq r-1$. This series has a \textit{length} equal to $r$. We shall call $\mathcal{E}$ a \textit{$J$-algebra} if all $J$-series for $(U,V)$ have the same finite length whenever $U$ and $V$ are subalgebras of $\mathcal{E}$ with $U\subsetneq V$.

With the purpose of studying modularity in evolution algebras,
we finish this section by stating the following results (see \cite[Theorem 2.2]{An_94} and \cite[Theorem 5.1]{ST_22}) in the wider context of solvable algebras. They follow from the existence of a complete flag of subalgebras in every finite-dimensional solvable algebra. Just recall that a subalgebra $U$ of $\mathcal{E}$ is called a \textit{quasi-ideal of $\mathcal{E}$} if $\langle U,V\rangle=U+V$ for all subalgebras $V$ of $\mathcal{E}$. Notice that every ideal is a quasi-ideal but the converse is not necessarily true (see below Example~\ref{ex:CR_algebra}, for instance).
\begin{proposition}[\cite{An_94}]\label{prop:equiv_mod_qi}
	Let $\mathcal{E}$ be a finite-dimensional solvable (non-necessarily evolution) algebra.
	Then, $\mathcal{E}$ is modular if and only if it is upper semimodular and if and only if every subalgebra is a quasi-ideal.
\end{proposition}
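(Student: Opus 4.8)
The plan is to prove the statement by closing the cycle of implications
\[
\text{modular} \;\Longrightarrow\; \text{upper semimodular} \;\Longrightarrow\; \text{every subalgebra is a quasi-ideal} \;\Longrightarrow\; \text{modular},
\]
after which the three properties are equivalent. Two of the three arrows are soft and valid for any finite-dimensional algebra, so all the weight of the solvability hypothesis should fall on the middle arrow. I would set up the proof so that the easy arrows are dispatched first and the complete-flag machinery is reserved for the implication from upper semimodularity to quasi-ideals.

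For the first arrow there is nothing to do beyond invoking the general lattice fact already recorded in the text, namely that a modular lattice is upper (and lower) semimodular. For the third arrow, suppose every subalgebra is a quasi-ideal and take subalgebras $U\subseteq W$ together with an arbitrary subalgebra $V$. Since $V$ and $V\cap W$ are again subalgebras and $U$ is a quasi-ideal, both joins collapse to vector-space sums: $\langle U,V\cap W\rangle=U+(V\cap W)$ and $\langle U,V\rangle=U+V$, so that $\langle U,V\rangle\cap W=(U+V)\cap W$. The lattice of subspaces of a $\mathbb{K}$-vector space obeys Dedekind's modular law, so from $U\subseteq W$ one gets $(U+V)\cap W=U+(V\cap W)$; comparing the two computations yields $\langle U,V\cap W\rangle=\langle U,V\rangle\cap W$, i.e. $\mathcal{E}$ is modular.

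The real content is the middle arrow, and the hard part will be to convert upper semimodularity into the equality $\langle U,V\rangle=U+V$. First I would note that the subalgebra lattice $L(\mathcal{E})$ has finite length (bounded by $\dim\mathcal{E}$), so upper semimodularity gives the Jordan--Dedekind chain condition and a well-defined height function $h$ on $L(\mathcal{E})$ satisfying the semimodular inequality $h(\langle U,V\rangle)+h(U\cap V)\le h(U)+h(V)$. Next I would use solvability: every subalgebra $A$ of $\mathcal{E}$ is again solvable, and refining its derived series through the abelian quotients $A^{(i)}/A^{(i+1)}$ (where every intermediate subspace $S$ with $A^{(i+1)}\subseteq S\subseteq A^{(i)}$ satisfies $S\cdot S\subseteq A^{(i+1)}\subseteq S$ and is therefore a subalgebra) produces a complete flag $0=A_0\subsetneq A_1\subsetneq\dots\subsetneq A_m=A$ with $\dim A_i=i$. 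Such a flag is a maximal chain of length $\dim A$ in $L(\mathcal{E})$, so the chain condition forces $h(A)=\dim A$ for \emph{every} subalgebra $A$. Substituting $h=\dim$ into the semimodular inequality gives $\dim\langle U,V\rangle\le\dim U+\dim V-\dim(U\cap V)=\dim(U+V)$, while $U+V\subseteq\langle U,V\rangle$ gives the reverse inequality; hence $\langle U,V\rangle=U+V$, and since $V$ was arbitrary, $U$ is a quasi-ideal.

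I expect the main obstacle to be precisely the identification $h=\dim$ on all of $L(\mathcal{E})$. The semimodular inequality on its own only controls the abstract height function, which a priori could be strictly smaller than the vector-space dimension (subalgebras may have maximal subalgebras of codimension greater than one); it is solvability, through the guaranteed existence of a complete flag realizing the full dimension of every subalgebra, that upgrades the Jordan--Dedekind condition to the uniform statement $h(A)=\dim A$. The points that will need genuine care are therefore the verification that the refinement of the derived series produces subalgebras of $\mathcal{E}$ (not merely of a quotient) and that the resulting codimension-one steps are maximal in $L(\mathcal{E})$, so that the chain condition legitimately applies.
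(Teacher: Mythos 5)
Your proposal is correct. The paper never proves this proposition itself: it is imported from \cite[Theorem 2.2]{An_94}, accompanied only by the remark that it follows from the existence of a complete flag of subalgebras in every finite-dimensional solvable algebra. Your cycle of implications is a correct, self-contained proof along exactly those lines: the two soft arrows (modularity implies upper semimodularity; the quasi-ideal property plus Dedekind's law in the subspace lattice implies modularity) are handled correctly, and in the crucial middle arrow you rightly use the refinement of the derived series (every subspace $S$ with $A^{(i+1)}\subseteq S\subseteq A^{(i)}$ is a subalgebra) to get a complete flag through each subalgebra, the Jordan--Dedekind chain condition of a semimodular lattice of finite length to force the height function to equal dimension, and the submodular height inequality to conclude $\langle U,V\rangle=U+V$.
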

\begin{proposition}[\cite{ST_22}]\label{prop:equiv_ls}
	Let $\mathcal{E}$  be a finite-dimensional solvable (non-necessarily evolution) algebra. Then, $\mathcal{E}$ is lower semimodular if and only if it is a $J$-algebra.
\end{proposition}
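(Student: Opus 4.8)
The plan is to establish the two implications separately, invoking the general theory of lattices of finite length for the forward direction and the existence of a complete flag of subalgebras for the reverse one. Recall that, phrased lattice-theoretically, lower semimodularity of $\mathcal{E}$ asserts that whenever a subalgebra $V$ is maximal in $\langle U,V\rangle$ one has $U\cap V$ maximal in $U$, while $\mathcal{E}$ being a $J$-algebra is precisely the Jordan--Dedekind chain condition. Finite-dimensionality guarantees that the subalgebra lattice has finite length, bounded by $\dim\mathcal{E}$, so both conditions are meaningful. For the implication lower semimodular $\Rightarrow$ $J$-algebra I would appeal to the classical fact that a lattice of finite length which is semimodular (in either sense) satisfies the Jordan--Dedekind chain condition \cite{Gr_98}; the argument is the dual of the usual one. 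Comparing the two maximal subalgebras $C,D$ lying just below $V$ in two maximal chains of an interval $[U,V]$, either $C=D$ and one recurses into $[U,C]$, or $C\neq D$, in which case $\langle C,D\rangle=V$ and lower semimodularity applied to the pair $(C,D)$ yields that $C\cap D$ is maximal in both $C$ and $D$, producing equally long chains through $C\cap D$ and closing an induction on the length.

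The substance of the statement is the converse, and this is where the complete flag enters. First I would recall why every finite-dimensional solvable algebra $\mathcal{A}\neq 0$ admits such a flag: since $\mathcal{A}$ is solvable, its derived subalgebra $\mathcal{A}^{(2)}=\mathcal{A}\mathcal{A}$ is a proper ideal, and the quotient $\mathcal{A}/\mathcal{A}^{(2)}$ has zero multiplication; hence any hyperplane of $\mathcal{A}$ containing $\mathcal{A}^{(2)}$ is a codimension-one subalgebra, necessarily solvable, and iterating produces a chain $0=A_0\subsetneq A_1\subsetneq\dots\subsetneq A_{\dim\mathcal{A}}=\mathcal{A}$ with $\dim A_i=i$ in which each term is maximal in the next. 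Applying this to an arbitrary subalgebra $V$ of $\mathcal{E}$, which is again solvable, furnishes a maximal chain from $0$ to $V$ of length $\dim V$.

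Assuming now that $\mathcal{E}$ is a $J$-algebra, the chain just produced forces every maximal chain from $0$ to $V$ to have length $\dim V$; equivalently, every maximal inclusion of subalgebras has codimension one, and the length of a maximal chain inside any interval $[U,V]$ equals $\dim V-\dim U$. Lower semimodularity then reduces to a dimension count. Suppose $V$ is maximal in $W=\langle U,V\rangle$, so that $\dim W=\dim V+1$. From $U+V\subseteq W$ we obtain $\dim U+\dim V-\dim(U\cap V)=\dim(U+V)\leq\dim W=\dim V+1$, whence $\dim U-\dim(U\cap V)\leq 1$. Since $U\not\subseteq V$ (otherwise $\langle U,V\rangle=V$), the inclusion $U\cap V\subsetneq U$ is strict, so $\dim U-\dim(U\cap V)=1$; as codimension-one inclusions are maximal, $U\cap V$ is maximal in $U$, as required.

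The main obstacle is precisely the step in the converse where the Jordan--Dedekind chain condition is upgraded to the identity that the rank function of the lattice coincides with the dimension: in an arbitrary lattice of finite length the Jordan--Dedekind condition does \emph{not} entail lower semimodularity, and it is solvability---materialised through the complete flag---that closes this gap. Once the rank-equals-dimension identity is secured, the remaining verification is the elementary inequality $\dim(U+V)\leq\dim\langle U,V\rangle$ together with the Grassmann formula.
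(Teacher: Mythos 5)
Your proposal is correct and takes essentially the approach the paper relies on: the paper gives no proof of Proposition~\ref{prop:equiv_ls}, citing instead \cite{ST_22} and noting that the result follows from the existence of a complete flag of subalgebras in every finite-dimensional solvable algebra, which is precisely the ingredient your converse direction constructs (via hyperplanes containing $\mathcal{E}^{(2)}$) and exploits to show that maximal inclusions have codimension one. Your forward direction (lower semimodularity implies the Jordan--Dedekind chain condition, a purely lattice-theoretic fact about finite-length lattices) together with the Grassmann-formula count for the converse faithfully reconstructs the cited argument, so there is nothing to object to.
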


\section{Evolution algebras with maximum index of solvability}\label{maxindexsolv}

In this section, we prove some results about solvable evolution algebras with maximum index of solvability, which will be helpful for our main results. Recall that if a solvable evolution algebra $\mathcal{E}$ of dimension $n$ has maximum index of solvability, then its index of solvability is $n+1$ (see \cite[Proposition~4.2]{CGOT_13}). First, we characterise these solvable evolution algebras.

\begin{proposition}\label{prop:caract_mis}
	Let $\mathcal{E}$ be an $n$-dimensional solvable evolution algebra. Then, $\mathcal{E}$ has maximum index of solvability if and only if $\codim{\mathcal{E}^2}=1$.
\end{proposition}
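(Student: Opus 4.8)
The plan is to read everything off the derived series $\mathcal{E}=\mathcal{E}^{(1)}\supseteq\mathcal{E}^{(2)}\supseteq\dots$, recalling that $\mathcal{E}^{(2)}=\mathcal{E}^2$, and to bookkeep the dimensions $d_k:=\dim\mathcal{E}^{(k)}$. Since $\mathcal{E}$ is solvable the series reaches $0$, and while it is nonzero each inclusion is strict (if $\mathcal{E}^{(k)}=\mathcal{E}^{(k+1)}\neq0$ then $\mathcal{E}^{(k)}$ is idempotent under squaring and the series is stationary, contradicting solvability). Hence the $d_k$ strictly decrease from $d_1=n$ until they hit $0$; if $s$ is the index of solvability this already yields $s-1\le n$, i.e. $s\le n+1$, recovering that $n+1$ is the maximum. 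The whole question thus reduces to deciding when every consecutive drop $d_k-d_{k+1}$ equals $1$.

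For the forward implication I would argue by pure counting. If $\mathcal{E}$ has maximum index $s=n+1$, then $0=\mathcal{E}^{(n+1)}\subsetneq\dots\subsetneq\mathcal{E}^{(1)}=\mathcal{E}$ has exactly $n$ strict inclusions; since the dimensions drop by at least $1$ at each of them while summing to $n$, every drop is exactly $1$. In particular the first drop is $1$, that is $\dim\mathcal{E}-\dim\mathcal{E}^2=1$, which is precisely $\codim\mathcal{E}^2=1$.

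The reverse implication is the substantial one. Assuming $\codim\mathcal{E}^2=1$, the first drop is $1$, and I must propagate this down the series: the strictness already gives $d_k-d_{k+1}\ge1$, so it suffices to prove the upper bound $d_k-d_{k+1}\le1$ for all $k$. Here the evolution structure is indispensable, and it is what supplies the upper bound: the analogous statement is \emph{false} for general solvable algebras, since a three-dimensional solvable Lie algebra with $[\mathcal{E},\mathcal{E}]$ of codimension $1$ but abelian derived subalgebra has index $3<4$. The tool I would exploit is the explicit product in a natural basis, $xy=\sum_{k}x_ky_k\,e_k^2$, which exhibits $\mathcal{E}^{(k+1)}$ as the image of the ``squaring'' map $z\mapsto\sum_k z_k e_k^2$ (encoded by the transpose of the structure matrix) applied to the span of the coordinatewise (Hadamard) products of pairs of elements of $\mathcal{E}^{(k)}$. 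Rephrasing $\codim\mathcal{E}^2=1$ as ``the structure matrix has rank $n-1$'', the plan is an induction in which this rank-one-deficiency is shown to be inherited at each step, forcing $\dim\mathcal{E}^{(k)}/\mathcal{E}^{(k+1)}\le1$.

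The main obstacle is that this induction cannot be run naively, because the derived subalgebra $\mathcal{E}^2$ of an evolution algebra need not itself be an evolution algebra, so one cannot simply invoke the statement for $\mathcal{E}^2$ in lower dimension. I expect the crux to be circumventing this: either by carrying out the descent entirely inside $\mathcal{E}$ through the global coordinatewise product formula and a support/rank analysis of the iterated Hadamard squares $\bigl(\mathcal{E}^{(k)}\bigr)^{\odot2}$, or by selecting a one-dimensional basic ideal $I\subseteq\mathcal{E}^2$ for which the evolution quotient $\mathcal{E}/I$ retains $\codim(\mathcal{E}/I)^2=1$ and lowers the index by exactly one, so that Remark~\ref{rem:quo_bas_i} together with the inductive hypothesis applies. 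Pinning down such an $I$ --- equivalently, proving that a codimension-one derived subalgebra cannot collapse by more than one dimension under squaring --- is where the real work lies.
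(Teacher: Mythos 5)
Your forward implication is complete and coincides with the paper's argument: the derived series of a solvable algebra is strictly decreasing while nonzero, so $n$ strict drops summing to $n$ forces every drop to equal $1$, and in particular $\codim\mathcal{E}^2=1$. The gap is in the converse, which you yourself call ``the substantial one'': you correctly reduce it to the inequality $\dim\mathcal{E}^{(s)}-\dim\mathcal{E}^{(s+1)}\le 1$ for every $s$, but you never prove that inequality --- you sketch two possible routes and explicitly defer the key step as ``where the real work lies''. That step is the entire content of the proposition, and the paper settles it with a short linear-algebra argument which is exactly your first route carried out: reorder the natural basis so that $\{e_1^2,\dots,e_{n-1}^2\}$ is linearly independent (possible since $\dim\mathcal{E}^2=n-1$); pick a basis $u_1,\dots,u_l$ of $\mathcal{E}^{(s)}$ whose coordinate matrix $(\mu_{ij})$ is in row echelon form, and by elementary operations arrange that only one of them, say $u_l$, has $n$ in its support; since the product is coordinatewise, $u_i^2=\sum_{k=1}^{n-1}\mu_{ik}^2e_k^2$ for $i<l$, and these are linearly independent because the rows $(\mu_{ik}^2)$ still have their pivots in distinct columns and the $e_k^2$ are independent. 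Hence $\dim\mathcal{E}^{(s+1)}\ge\dim\mathcal{E}^{(s)}-1$, which together with strictness closes the induction. Nothing in your text substitutes for this computation.

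Moreover, your second proposed route (induction via a one-dimensional \emph{basic} ideal $I\subseteq\mathcal{E}^2$ and Remark~\ref{rem:quo_bas_i}) would fail as stated, not merely remain unexecuted. In a solvable evolution algebra a basic ideal $\spa\{e_i\}$ forces $e_i^2=\lambda e_i$, and $\lambda\neq0$ would make $\spa\{e_i\}$ a non-solvable subalgebra; so one-dimensional basic ideals exist only in degenerate algebras. Already $\mathcal{E}_2(1,-1)$, i.e. $e_1^2=-e_2^2=e_1+e_2$, is non-degenerate, satisfies $\codim\mathcal{E}^2=1$ and has maximum index of solvability, yet its only one-dimensional ideal $\spa\{e_1+e_2\}$ is not basic, so the proposed induction cannot even begin there. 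In summary: the easy direction is done, the hard direction is reduced correctly and the right tool (coordinatewise squaring in a natural basis) is named, but the essential step is left unproved, and one of the two fallback strategies is unworkable.
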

\begin{proof}
	If $\mathcal{E}$ has maximum index of solvability, then the derived series has $n$ non-zero terms, that is,
	\begin{equation}\label{dim}
		n=\dim{\mathcal{E}}>\dim{\mathcal{E}^{(2)}}>\dots>\dim{\mathcal{E}^{(n)}}>\dim{\mathcal{E}^{(n+1)}}=0.
	\end{equation}
	From~\eqref{dim}, it holds that $\dim{\mathcal{E}^{(2)}}=\dim{\mathcal{E}^2}=\dim{\big(\spa\{e_1^2,\dots,e_n^2\}\big)}=n-1$. Conversely, if $\codim\mathcal{E}^2=1$, then there exists a reordering of the natural basis such that $\{e_1^2,\dots,e_{n-1}^2\}$ is a linearly independent subset. Consider $\mathcal{E}^{(s)}=\spa\{u_1,\dots,u_l\}$, where $u_i=\sum_{j=1}^n\mu_{ij}e_j$ with $\mu_{ij}\in\mathbb{K}$. Without loss of generality, suppose that the matrix $(\mu_{ij})_{i,j=1}^{l,n}$ is in row echelon form. Moreover, at least one element exists such that
	$n$ belongs to its support, say $u_l$. Then, make zeros in the last column, performing elementary operations. Finally, after all these changes and considering that $\{e_1^2,\dots,e_{n-1}^2\}$ is a linearly independent subset, we have that the squares $u_i^2=\sum_{k=1}^{n-1}\mu_{ik}^2e_k^2$ with $i=1,\dots,l-1$ also constitute a linearly independent subset. As $s$ was taken arbitrarily, it holds that $\dim{\mathcal{E}^{(s+1)}}=\dim{\mathcal{E}^{(s)}}-1$ for any $1\leq s\leq n$, which yields the claim.
\end{proof}

The following result determines subalgebras of solvable evolution algebras with maximum index of solvability.

\begin{proposition}\label{prop:cant_sub}
	Let $\mathcal{E}$ be a solvable evolution algebra with maximum index of solvability over any field of characteristic different from $2$. Then, there exists a reordering of the natural basis such that $e_n^2=-\lambda_1^2e_1^2-\dots-\lambda_{m}^2e_{m}^2$ with $m\leq n-1$ and $\lambda_1,\dots,\lambda_m\in\mathbb{K}^*$, and $\spa\{\pm\lambda_1e_1\pm\dots\pm\lambda_{m}e_{m}\pm e_n\}$ (with the signs taken interchangeably) are all the one-dimensional subalgebras. Consequently, $\mathcal{E}$ has $2^m$ one-dimensional subalgebras.
\end{proposition}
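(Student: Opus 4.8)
The plan is to reduce the determination of all one-dimensional subalgebras to solving the single equation $u^2=0$, and then to read the answer off from the unique linear dependence among the squares of the basis vectors. To begin, I would record the consequences of Proposition~\ref{prop:caract_mis}: since $\mathcal{E}$ has maximum index of solvability, $\codim\mathcal{E}^2=1$, so $\dim\mathcal{E}^2=n-1$. As $\mathcal{E}^2=\spa\{e_1^2,\dots,e_n^2\}$, the squares satisfy a single linear dependence relation (unique up to scalar), i.e. the kernel of the map $(x_i)\mapsto\sum_i x_ie_i^2$ is one-dimensional. Everything will be extracted from this kernel.

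Next I would show that every one-dimensional subalgebra is spanned by an absolute nilpotent element. Let $\spa\{u\}$ be one-dimensional with $u=\sum_i\mu_ie_i\neq0$; then $u^2=\sum_i\mu_i^2e_i^2\in\spa\{u\}$, say $u^2=\alpha u$. If $\alpha\neq0$, then $u=\alpha^{-1}u^2\in\mathcal{E}^2=\mathcal{E}^{(2)}$, and inductively $u\in\mathcal{E}^{(k)}$ for every $k$ (since $u\in\mathcal{E}^{(k)}$ forces $u^2=\alpha u\in\mathcal{E}^{(k+1)}$); because $\mathcal{E}^{(n+1)}=0$ this yields $u=0$, a contradiction. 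Hence $\alpha=0$ and $u^2=0$. In particular $\mathcal{E}$ has no nonzero idempotent. Since the derived series drops dimension by exactly one at each step, the bottom term $\mathcal{E}^{(n)}$ is one-dimensional, and $\mathcal{E}^{(n+1)}=\mathcal{E}^{(n)}\mathcal{E}^{(n)}=0$ shows its generator $w=\sum_i\omega_ie_i$ is absolute nilpotent. Thus $\sum_i\omega_i^2e_i^2=0$, so the entrywise square $(\omega_1^2,\dots,\omega_n^2)$ spans the one-dimensional kernel found above; this is what lets me normalise the relation to one with \emph{square} coefficients.

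Then I would reorder the basis. Writing $S=\supp_B(w)$, I reorder so that one index of $S$ becomes $n$ and the remaining ones are $1,\dots,m$, with $m=\abs{S}-1\leq n-1$; dividing the relation by the nonzero coefficient of $e_n^2$ and setting $\lambda_i=\omega_i/\omega_n\in\mathbb{K}^*$ gives the asserted form $e_n^2=-\lambda_1^2e_1^2-\dots-\lambda_m^2e_m^2$, and the kernel is spanned by $(\lambda_1^2,\dots,\lambda_m^2,0,\dots,0,1)$. Finally, $u=\sum_i\nu_ie_i$ spans a one-dimensional subalgebra iff $u^2=0$, iff $(\nu_1^2,\dots,\nu_n^2)$ is a scalar multiple of this kernel vector; this forces $\nu_i=0$ for $m<i<n$, forces $\nu_n\neq0$ for a nonzero solution, and gives $\nu_i=\pm\nu_n\lambda_i$ for $i\le m$. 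Scaling the $e_n$-coordinate to $1$, these are precisely $\pm\lambda_1e_1\pm\dots\pm\lambda_me_m+e_n$; as $\lambda_i\neq0$ and $\mathrm{char}\,\mathbb{K}\neq2$, the $2^m$ sign patterns give $2^m$ distinct subalgebras. A quick check confirms each is a subalgebra: squaring kills the signs and $e_n^2$ cancels $\sum_i\lambda_i^2e_i^2$.

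I expect the main obstacle to be the coefficient normalisation. Over an arbitrary field one cannot a priori write the coefficients of the dependence relation as $-\lambda_i^2$, because $-c_i$ need not be a square. The device that overcomes this is exhibiting a concrete absolute nilpotent element, namely the generator of $\mathcal{E}^{(n)}$, whose squared coordinates automatically provide a square-coefficient representative of the one-dimensional space of relations; the hypothesis $\mathrm{char}\,\mathbb{K}\neq2$ then enters only to guarantee that distinct sign patterns yield distinct vectors, so that the count is exactly $2^m$.
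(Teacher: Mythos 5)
Your proof is correct, but it follows a genuinely different route from the paper's. The paper splits the statement in two: it declares the existence of the normalised form $e_n^2=-\lambda_1^2e_1^2-\dots-\lambda_m^2e_m^2$ ``obvious'', then runs an induction on $m$, building the $2^{m+1}$ elements $u_i\pm\lambda_{m+1}e_{m+1}$ from the $2^m$ elements of the previous step, and only at the very end recovers $\lambda_1,\dots,\lambda_m\in\mathbb{K}^*$ indirectly, from the completeness of the list together with the existence of a complete flag of subalgebras in any solvable algebra. You instead argue in one pass by linear algebra: since $\codim\mathcal{E}^2=1$ (Proposition~\ref{prop:caract_mis}), the squares $e_1^2,\dots,e_n^2$ satisfy a unique dependence relation up to scalar; one-dimensional subalgebras are exactly the lines of absolute nilpotent elements (your observation that $u^2=\alpha u$ with $\alpha\neq0$ forces $u\in\bigcap_k\mathcal{E}^{(k)}=0$, which the paper leaves implicit); and the generator $w$ of the one-dimensional bottom term $\mathcal{E}^{(n)}$ is absolute nilpotent, so its coordinate squares $(\omega_1^2,\dots,\omega_n^2)$ exhibit a representative of the relation with \emph{square} coefficients --- precisely the normalisation the paper calls obvious, and which is not automatic over a field that is not quadratically closed. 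From there, any $u$ with $u^2=0$ has coefficient squares proportional to $(\lambda_1^2,\dots,\lambda_m^2,0,\dots,0,1)$, which pins down the support, forces $\nu_n\neq0$ and $\nu_i=\pm\nu_n\lambda_i$, and makes the enumeration provably exhaustive. What each approach buys: the paper's induction is shorter on the page, but your argument is more self-contained --- it justifies the ``first part'', gets $\lambda_i\in\mathbb{K}^*$ directly from $\supp(w)$ rather than a posteriori, and closes the completeness gap that the paper's construction-by-induction does not explicitly address; in both arguments, the hypothesis $\operatorname{char}\mathbb{K}\neq2$ enters only to ensure distinct sign patterns span distinct lines.
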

\begin{proof}
	The first part is obvious. For the second part, we use induction on $m$. When $m=1$, it is trivially true because $\spa\{\lambda_1e_1+e_n\}$
	and $\spa\{\lambda_1e_1-e_n\}$ are the only one-dimensional subalgebras. Then suppose the assertion is true for $m$,
	i.e. there exist elements $u_i$ such that $u_i^2=0= e_n^2+\lambda_1^2e_1^2+\dots+\lambda_{m}^2e_{m}^2$ for $i=1,\dots,2^m$.
	Then, if $e_n^2=-\lambda_1^2e_1^2-\dots-\lambda_{m+1}^2e_{m+1}^2$,
	the elements $u_i+\lambda_{m+1}e_{m+1}$ and $u_i-\lambda_{m+1}e_{m+1}$ for $i=1,\dots,2^m$, span $2\cdot2^m=2^{m+1}$ different one-dimensional subalgebras.
	Finally, the fact that $\lambda_1,\dots,\lambda_m\in\mathbb{K}^*$ is a consequence of the fact that $\spa\{\pm\lambda_1e_1\pm\dots\pm\lambda_{m}e_{m}\pm e_n\}$ are all the one-dimensional subalgebras. Otherwise, there would not exist any one-dimensional subalgebra, which contradicts the existence of a complete flag made up of subalgebras in every solvable algebra.
\end{proof}
\begin{remark}\label{rem:mis}
	As $\lambda_1,\dots,\lambda_m\in\mathbb{K}^*$ in the previous lemma, with the suitable natural basis change ($f_i=\lambda_i e_i$ for any $i=1,\dots,m$) one can assume that $e_n^2=-e_1^2-\dots-e_m^2$ for some $m\leq n-1$. Particularly, the one-dimensional subalgebras are $\spa\{\pm e_1\pm\dots\pm e_{m}\pm e_n\}$.
\end{remark}

Notice that \cite[Proposition 4.2]{CGOT_13} establishes an equivalence between maximum index of nilpotency and maximum index of solvability in the nilpotent case. Moreover, \cite[Theorem~4.5]{CGOT_13} states that a nilpotent evolution algebra reaches its maximum index of nilpotency if and only if the first (upper or lower) diagonal of its structure matrix has no zero elements.

\begin{corollary}\label{cor:sub_chain}
	Let $\mathcal{E}$ be a nilpotent evolution algebra with maximum index of nilpotency, $B=\{e_1,\dots,e_n\}$ a natural basis of $\mathcal{E}$ such that $M_B(\mathcal{E})$ is strictly upper triangular and $U$ a subalgebra. Then $U=\spa\{e_k,\dots,e_n\}$ for some $k=1,\dots,n$.
\end{corollary}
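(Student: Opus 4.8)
The plan is to exploit two features of the hypotheses: that the superdiagonal of $M_B(\mathcal{E})$ has no zero entries, and that squaring an element shifts the minimum of its support upward by exactly one. First, by \cite[Theorem~4.5]{CGOT_13}, the maximality of the index of nilpotency together with the strictly upper triangular shape of $M_B(\mathcal{E})$ forces every superdiagonal entry to be nonzero, so that $e_i^2=a_{i,i+1}e_{i+1}+\dots+a_{in}e_n$ with $a_{i,i+1}\neq0$ for $1\leq i\leq n-1$, while $e_n^2=0$. A direct check shows that each $\spa\{e_k,\dots,e_n\}$ is closed under products and is therefore a subalgebra, so the candidate subalgebras are well defined.

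The crux is the following observation on supports. If $v\neq0$ has $m\coloneqq\min\supp_B(v)<n$, then writing $v=\sum_{i\geq m}\nu_ie_i$ with $\nu_m\neq0$, the only contribution to the coefficient of $e_{m+1}$ in $v^2=\sum_{i\geq m}\nu_i^2e_i^2$ comes from $\nu_m^2e_m^2$ and equals $\nu_m^2a_{m,m+1}\neq0$, whereas every other summand has support contained in $\{m+2,\dots,n\}$. Hence $\min\supp_B(v^2)=m+1$ exactly. Applying this repeatedly to the plenary powers of a fixed element $u$ with $\min\supp_B(u)=k$, I obtain $\min\supp_B(u^{(j)})=k+j$ for $0\leq j\leq n-k$; in particular $u^{(n-k)}$ is a nonzero scalar multiple of $e_n$. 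Note that no restriction on the characteristic is needed, since $\nu_m\neq0$ already gives $\nu_m^2\neq0$.

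Now let $U\neq0$ be a subalgebra and set $k=\min\{\min\supp_B(u)\colon 0\neq u\in U\}$. The inclusion $U\subseteq\spa\{e_k,\dots,e_n\}$ is immediate from the definition of $k$. For the reverse inclusion, fix $u\in U$ with $\min\supp_B(u)=k$; since $U$ is a subalgebra, all its plenary powers $u^{(0)},\dots,u^{(n-k)}$ lie in $U$ and, by the support observation, $u^{(i-k)}$ has minimum support exactly $i$ for each $i\in\{k,\dots,n\}$. I then argue by downward induction on $i$: the top power $u^{(n-k)}$ gives $e_n\in U$, and if $e_{i+1},\dots,e_n\in U$ are already known to lie in $U$, subtracting the appropriate combination of them from $u^{(i-k)}$ leaves a nonzero multiple of $e_i$ in $U$, whence $e_i\in U$. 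This yields $\spa\{e_k,\dots,e_n\}\subseteq U$ and hence equality, the degenerate case $U=0$ corresponding to the empty span.

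The only genuinely delicate point is the support-shift observation, which is exactly where the non-vanishing of the superdiagonal—equivalently, the maximality of the index of nilpotency—is indispensable; without it the minimum support could jump by more than one and the plenary powers would fail to realise every index between $k$ and $n$. Once that step is secured, the two inclusions reduce to routine bookkeeping with the downward induction.
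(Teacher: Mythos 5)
Your proof is correct and follows essentially the same route as the paper's: both exploit the non-vanishing superdiagonal (via \cite[Theorem~4.5]{CGOT_13}) to show that squaring shifts the minimum of the support up by exactly one, apply this to the plenary powers $u^{(0)},\dots,u^{(n-k)}$ of an element of minimal support, and then recover $e_n,e_{n-1},\dots,e_k$ by downward induction. Your version is slightly tidier in two respects—you isolate the support-shift fact as a standalone observation instead of writing out the explicit leading coefficients $\mu_k^{2^i}a_{k(k+1)}^{2^{i-1}}\cdots a_{(k+i-1)(k+i)}$, and you make explicit the passage from the singly generated subalgebra $\langle u\rangle$ to an arbitrary subalgebra $U$ (which the paper leaves implicit)—but these are refinements of presentation, not a different argument.
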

\begin{proof}
	First, by Proposition~\ref{prop:cant_sub}, since $e_n^2=0$, the only one-dimensional subalgebra is $\spa\{e_n\}$. 
	To study the other subalgebras,  consider an arbitrary element $u=\sum_{i=1}^{n}\mu_ie_i\in\mathcal{E}$ and define $k=\min\{\supp(u)\}$. Hence, we have that
	\begin{align*}
		u^{(0)}&=u=\mu_ke_k+v_{k+1},\ v_{k+1}\in\spa\{e_{k+1},\dots,e_n\};\\
		&\:\ \vdots \\
		u^{(i)}&=\mu_k^{2^{i}}a_{k(k+1)}^{2^{i-1}}\dots a_{(k+i-1)(k+i)}e_{k+i}+v_{k+i+1}, \ v_{k+i+1}\in\spa\{e_{k+i+1},\dots,e_n\};\\
		&\:\ \vdots \\
		u^{(n-k)}&=\mu_k^{2^{n-k}}a_{k(k+1)}^{2^{n-k-1}}\dots a_{(n-1)n}e_{n};
	\end{align*}
	with $\mu_k^{2^{i}}a_{k(k+1)}^{2^{i-1}}\dots a_{(k+i-1)(k+i)}\neq0$ for all $0< i\leq n-k$. As all of the previous elements are in $\langle u\rangle$, it holds that $e_n\in\langle u\rangle$. In the same way, as $u^{(n-k-1)}=c_1e_{n-1}+c_2e_n\in \langle u\rangle$ with $c_1,c_2\in\mathbb{K}$, $c_1\neq0$, and $e_n\in\langle u\rangle$, it also holds that $e_{n-1}\in\langle u\rangle$. Inductively, it comes that $\langle u\rangle=\spa\{e_k,\dots,e_n\}$.
\end{proof}

Finally, the following two lemmas will also be instrumental throughout our study.

\begin{lemma}\label{lem:sub_2}
	Let $\mathcal{E}$ be an $n$-dimensional solvable evolution algebra with maximum index of solvability. Then $\mathcal{E}^{(n-1)}$ has at most two subalgebras.
\end{lemma}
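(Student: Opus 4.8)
The plan is to reduce the statement to counting the projective zeros of a single binary quadratic form. First I would fix the dimensions. By Proposition~\ref{prop:caract_mis}, maximum index of solvability is equivalent to $\codim\mathcal{E}^2=1$, and the computation in its proof shows that every step of the derived series drops the dimension by exactly one, so $\dim\mathcal{E}^{(s)}=n-(s-1)$. Writing $W\coloneqq\mathcal{E}^{(n-1)}$, this gives $\dim W=2$, while $W^{(2)}=\mathcal{E}^{(n)}$ is one-dimensional and $W^{(3)}=\mathcal{E}^{(n+1)}=0$. Since $W^2=W^{(2)}$ (both equal $WW$), I would record the two structural facts I will use: $\dim W^2=1$, say $W^2=\spa\{z\}$, and $W^2W^2=W^{(3)}=0$, so in particular $z^2=0$. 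This is the precise point where the maximum-index hypothesis enters.

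Next I would describe the one-dimensional subalgebras of $W$ intrinsically, without needing $W$ itself to be an evolution algebra. Choosing a basis $\{u,v\}$ of $W$ and using commutativity, for $w=xu+yv$ one has $w^2=x^2u^2+2xy\,uv+y^2v^2=Q(x,y)\,z$ for a binary quadratic form $Q$. The crucial reduction is that a line $\spa\{w\}$ is a subalgebra if and only if $w^2=0$: indeed, if $w^2\neq0$, then $w^2\in\spa\{w\}$ would force $z\in\spa\{w\}$, i.e.\ $w\in\spa\{z\}=W^2$, whence $w^2=0$ because $z^2\in W^2W^2=0$, a contradiction. Thus the one-dimensional subalgebras of $W$ are exactly the lines on which $Q$ vanishes, and since $\dim W=2$ these lines are the only proper nonzero subalgebras of $W$, which is the content of the lemma.

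It then remains to bound the projective zero set of $Q$. Provided $Q\not\equiv0$, it is a nonzero homogeneous polynomial of degree two in two variables, hence has at most two distinct projective roots, giving at most two one-dimensional subalgebras. That $Q$ is nonzero is immediate when the characteristic of $\mathbb{K}$ is different from $2$: writing $u^2=az$, $uv=bz$, $v^2=cz$, one has $Q=ax^2+2bxy+cy^2$, and $Q\equiv0$ would force $a=b=c=0$, i.e.\ $W^2=0$, contradicting $\dim W^2=1$. Alternatively, one can bypass the explicit form and simply apply Proposition~\ref{prop:cant_sub} to $W$, which is a two-dimensional solvable evolution algebra of maximum index of solvability (its index is $3=2+1$): the proposition yields $2^m$ one-dimensional subalgebras with $m\le\dim W-1=1$, hence at most two.

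The main obstacle is twofold. The conceptual crux is the reduction in the second paragraph, showing that the only candidate one-dimensional subalgebras are the absolute nilpotent lines $w^2=0$; this hinges entirely on the relation $W^2W^2=0$ inherited from $\mathcal{E}^{(n+1)}=0$, so the hypothesis cannot be weakened. The technical point is the nonvanishing of $Q$: in characteristic two the cross term $2b\,xy$ disappears, and $Q$ could a priori vanish identically while $W^2\neq0$ (the degenerate case $u^2=v^2=0$, $uv\neq0$). Under the standing assumption that $\mathbb{K}$ has characteristic different from $2$ — operative throughout this section via Proposition~\ref{prop:cant_sub} — this degeneracy does not arise and the intrinsic argument closes the proof.
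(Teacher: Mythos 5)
Your proof is correct and takes essentially the same route as the paper: both identify $\mathcal{E}^{(n-1)}$ as two-dimensional with $\big(\mathcal{E}^{(n-1)}\big)^2=\mathcal{E}^{(n)}$ spanned by a square-zero element, reduce one-dimensional subalgebras to absolute nilpotent lines, and bound these by the at most two projective zeros of the resulting binary quadratic form --- the paper simply solves the equation $\beta(\beta k_1+2\alpha k_2)=0$ explicitly instead of invoking the zero-count abstractly. Your justification that subalgebra lines must be nilpotent lines, and your remark that characteristic different from $2$ is what rules out the degenerate case $u^2=v^2=0$, $uv\neq0$, make explicit two points the paper's proof leaves implicit, but they do not change the argument.
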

\begin{proof}
	By~\eqref{dim}, we can assume that $\mathcal{E}^{(n)}=\spa\{v\}$ and $\mathcal{E}^{(n-1)}=\spa\{v,u\}$, where  $v^2=0$, $u^2=k_1v$ and $vu=uv=k_2v$ with $k_1,k_2\in\mathbb{K}$. Hence, any subalgebra of $\mathcal{E}^{(n-1)}$ will be spanned by $\alpha v+\beta u$ such that 
	\[0=(\alpha v+\beta u)^2=\alpha^2 v^2+\beta^2 u^2+ 2\alpha\beta vu=\beta(\beta k_1+2\alpha k_2)v.\]
	Then, it is easy to deduce that, if $k_1\neq0$, $\mathcal{E}^{(n-1)}$ has at most two different subalgebras: $\spa\{v\}$ and $\spa\{v-\frac{2k_2}{k_1}u\}$; and if $k_1=0$ it has two different subalgebras: $\spa\{v\}$ and $\spa\{u\}$.
\end{proof}
\begin{lemma}\label{lem:clas_solv_anyfield}
	Any two-dimensional solvable evolution algebra over any field $\mathbb{K}$ is abelian or isomorphic to one of the following non-isomorphic algebras:
	\begin{enumerate}[\rm (i)]
		\item $e_1^2=-e_2^2=e_1+e_2$;
		\item $e_1^2=e_2$, $e_2^2=0$ %(or $e_1^2=0$, $e_2^2=e_1$).
	\end{enumerate}
\end{lemma}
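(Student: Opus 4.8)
The plan is to fix a natural basis $\{e_1,e_2\}$ and write the structure equations $e_1^2=a_{11}e_1+a_{12}e_2$ and $e_2^2=a_{21}e_1+a_{22}e_2$, so that $\mathcal{E}^{(2)}=\mathcal{E}\mathcal{E}=\spa\{e_1^2,e_2^2\}$ since the mixed product $e_1e_2$ vanishes. The first step is to translate solvability into a constraint on these squares. If $\dim\mathcal{E}^{(2)}=2$ then $\mathcal{E}^{(2)}=\mathcal{E}$ and the derived series stabilises, so $\mathcal{E}$ is not solvable; hence $\dim\mathcal{E}^{(2)}\le1$. When $\dim\mathcal{E}^{(2)}=0$ we obtain $e_1^2=e_2^2=0$, the abelian algebra. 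When $\dim\mathcal{E}^{(2)}=1$, writing $\mathcal{E}^{(2)}=\spa\{w\}$, one has $\mathcal{E}^{(3)}=\spa\{w^2\}$ with $w^2\in\mathcal{E}^{(2)}$, so $w^2=\gamma w$; solvability forces $\gamma=0$, that is, $w$ is absolute nilpotent. This reduces the problem to the non-abelian situation in which at least one of $e_1^2,e_2^2$ is nonzero and the generator $w$ of $\mathcal{E}^{(2)}$ satisfies $w^2=0$.

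Next I would split according to how many of $e_1^2,e_2^2$ vanish. If exactly one does, then after possibly swapping $e_1\leftrightarrow e_2$ (a natural-basis change) I may assume $e_2^2=0\ne e_1^2=:w$. Expanding gives $w^2=a_{11}^2e_1^2=a_{11}^2w$, and imposing $w^2=0$ forces $a_{11}=0$, so $e_1^2=a_{12}e_2$ with $a_{12}\ne0$; rescaling $f_1=e_1$, $f_2=a_{12}e_2$ then yields $f_1^2=f_2$, $f_2^2=0$, which is algebra (ii). If both squares are nonzero they are proportional, say $e_2^2=\mu e_1^2$ with $\mu\ne0$, and with $w=e_1^2=a_{11}e_1+a_{12}e_2$ the identity $w^2=(a_{11}^2+\mu a_{12}^2)w$ and the condition $w^2=0$ force $a_{11}^2+\mu a_{12}^2=0$; a short argument (if either $a_{11}$ or $a_{12}$ vanished, the relation would make both vanish and give $w=0$) shows $a_{11},a_{12}\ne0$. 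A diagonal natural-basis change $f_1=pe_1$, $f_2=qe_2$ with $p=1/a_{11}$ and $q=a_{12}/a_{11}^2$ is then designed so that $e_1^2\mapsto f_1+f_2$, and a direct computation confirms it simultaneously sends $e_2^2\mapsto -(f_1+f_2)$, giving algebra (i).

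Finally I would check that the three outputs are genuinely distinct: the abelian algebra has zero multiplication, while (i) and (ii) do not; moreover (ii) is nilpotent of index $3$, whereas (i) satisfies $\mathcal{E}^k=\mathcal{E}^{(2)}\ne0$ for all $k\ge2$ and is therefore not nilpotent, so (i)$\not\cong$(ii) over every field since nilpotency is an isomorphism invariant. The step I expect to require the most care is the explicit normalisation in the case where both squares are nonzero: the scalars $p,q$ must be chosen so that the computation for $e_2^2$ closes up over an \emph{arbitrary} field, and I would double-check characteristic $2$ in particular, where $-1=1$ collapses algebra (i) to $e_1^2=e_2^2=e_1+e_2$ but where the basis change and the relation $a_{11}^2+\mu a_{12}^2=0$ remain valid. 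The only other characteristic-sensitive point is the passage from $a_{11}^2=0$ to $a_{11}=0$ in the single-square case, which is harmless since a field has no nonzero nilpotents.
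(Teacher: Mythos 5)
Your proof is correct and follows essentially the same route as the paper's: parametrise the products in a natural basis, use solvability to force the squares to span a space of dimension at most one with absolute nilpotent generator, split into cases according to which squares vanish, and normalise by a diagonal change of natural basis. You are in fact more thorough than the paper, which leaves implicit the solvability constraint $k=\alpha^2/\beta^2$ in its case (a) and omits the verification that the algebras (i) and (ii) are non-isomorphic (your nilpotency argument), but these are refinements of the same argument rather than a different approach.
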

\begin{proof}
	Notice that if $\mathcal{E}$ is a two-dimensional solvable evolution algebra, then its product is given by $e_1^2=\alpha e_1+\beta e_2$ and $e_2^2=-k(\alpha e_1+\beta e_2)$, with $\alpha,\beta,k\in\mathbb{K}$. We have the following possibilities:
	\begin{enumerate}[\rm (a)]
		\item If $\alpha\neq0$ and $\beta\neq0$, then we can take an appropriate change of basis such that $f_1^2=-f_2^2=f_1+f_2$.
		\item If $\alpha=0$ and $\beta\neq0$, then $k=0$ and again, we can take an appropriate change of basis such that $f_1^2=f_2$ and $f_2^2=0$.
		\item If $\alpha=0$ and $\beta=0$ then $\mathcal{E}$ is abelian.
	\end{enumerate}
\end{proof}
\section{Supersolvable evolution algebras}\label{supersoluble}

When studying modularity, we will naturally find another type of evolution algebras. An evolution algebra $\mathcal{E}$ is said to be \textit{supersolvable} if there exists a complete flag made up of ideals, that is, there exists a chain $0=I_0\subsetneq I_1\subsetneq\dots\subsetneq I_n=\mathcal{E}$ of ideals such that $\dim{I_i}=i$ for every $0\leq i\leq n$. For example, notice that every nilpotent evolution algebra is supersolvable. If we consider, without loss of generality, that its structure matrix is strictly upper triangular, then
\[
0\subsetneq\spa\{e_n\}\subsetneq\spa\{e_{n-1,}e_n\}\subsetneq\dots\subsetneq\spa\{e_2,\dots,e_n\}\subsetneq\mathcal{E}
\]
is trivially a complete flag made up of ideals. In fact, every solvable evolution algebra with a structure matrix \eqref{matriz_solv} is supersolvable.
\begin{proposition}
	Let $\mathcal{E}$ be a solvable evolution algebra of the kind described in Definition~\ref{def:solv_2}. Then, $\mathcal{E}$ is supersolvable.
\end{proposition}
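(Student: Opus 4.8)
The plan is to build the required complete flag of ideals in two segments, splitting at the ideal $\mathcal{I}\coloneqq\oplus_{i=1}^{r}\mathcal{E}_i=\spa\{e_1,\dots,e_m\}$, which by Definition~\ref{def:solv_2} is an ideal of $\mathcal{E}$ of dimension $m$ whose quotient $\mathcal{E}/\mathcal{I}$ is nilpotent. First I would construct a complete flag of ideals of $\mathcal{E}$ running from $0$ to $\mathcal{I}$ (dimensions $0,1,\dots,m$), and then one running from $\mathcal{I}$ to $\mathcal{E}$ (dimensions $m,m+1,\dots,n$); concatenating the two produces the sought flag.

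The upper segment is the easy one. Since $\mathcal{E}/\mathcal{I}$ is nilpotent, it is supersolvable, so it admits a complete flag of ideals $\overline{0}=\overline{J}_0\subsetneq\dots\subsetneq\overline{J}_{n-m}=\mathcal{E}/\mathcal{I}$. Pulling these back along the canonical projection and using the standard correspondence between ideals of $\mathcal{E}/\mathcal{I}$ and ideals of $\mathcal{E}$ containing $\mathcal{I}$ yields ideals $\mathcal{I}=J_0\subsetneq J_1\subsetneq\dots\subsetneq J_{n-m}=\mathcal{E}$ with $\dim J_t=m+t$, which is exactly the upper segment.

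The lower segment rests on one key observation, which I expect to be the crux: \emph{every ideal of the subalgebra $\mathcal{I}$ is automatically an ideal of $\mathcal{E}$}. Indeed, if $I\trianglelefteq\mathcal{I}$ and $x\in I$, then $xe_p\in I\mathcal{I}\subseteq I$ for every basis vector $e_p$ with $p\le m$, whereas for $p>m$ each $l\le m$ is distinct from $p$, so $e_le_p=0$ and therefore $xe_p=0$; hence $x\mathcal{E}\subseteq I$. This is precisely what neutralises the arbitrary coupling block $C$, the only place where the nilpotent part acts on $\mathcal{I}$. It thus suffices to exhibit a complete flag of ideals of $\mathcal{I}$ alone. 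Assuming, by Lemma~\ref{lem:spg_mod}, that each $\mathcal{E}_i$ is non-nilpotent, write $c_i\coloneqq\sum_{j=1}^{k_i}e_j^{(i)}$ for the generator of $\mathcal{E}_i^2$; using $\sum_{j\le k_i}\lambda_j^{(i)}=0$ one gets $c_i^2=0$ and $\mathcal{I}^2=\spa\{c_1,\dots,c_r\}$. Consequently each partial sum $\spa\{c_1,\dots,c_t\}$ is an ideal, and every subspace containing $\mathcal{I}^2$ is an ideal of $\mathcal{I}$ (as $I\mathcal{I}\subseteq\mathcal{I}^2\subseteq I$). Refining
$$0\subsetneq\spa\{c_1\}\subsetneq\dots\subsetneq\spa\{c_1,\dots,c_r\}=\mathcal{I}^2\subsetneq\dots\subsetneq\mathcal{I}$$
to a complete chain of subspaces delivers a complete flag of ideals of $\mathcal{I}$, and hence of $\mathcal{E}$, of dimensions $0,1,\dots,m$.

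The only genuine difficulty I anticipate is the verification that subspaces sitting inside $\mathcal{I}$ remain ideals of the full algebra in spite of the arbitrary block $C$; once the orthogonality observation above is recorded, everything else reduces to routine dimension bookkeeping together with the correspondence theorem for the nilpotent quotient.
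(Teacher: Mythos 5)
Your proof is correct, but it takes a genuinely different route from the paper's. The paper argues by induction on $\dim\mathcal{E}$: since $r\geq1$, the one-dimensional subspace $\mathcal{E}_1^2$ is an ideal of $\mathcal{E}$, the quotient $\mathcal{E}/\mathcal{E}_1^2$ is checked to be either nilpotent or again a member of such a family, and the induction hypothesis finishes the argument. You instead build the whole flag at once, split at $\mathcal{I}=\oplus_{i=1}^r\mathcal{E}_i$, and your crux --- that \emph{every} ideal of the subalgebra $\mathcal{I}$ is an ideal of all of $\mathcal{E}$, because $xe_p=0$ for $x\in\mathcal{I}$ and $p>m$ by naturality of the basis --- is a structural fact the paper never isolates; the paper only uses (without proof) its special case $\mathcal{E}_1^2\trianglelefteq\mathcal{E}$, once per induction step. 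What your approach buys is explicitness: the flag is exhibited concretely, the irrelevance of the coupling block $C$ is made transparent rather than absorbed into an ``easy to check'' claim about the quotient, and no induction within the family is needed --- only the already established supersolvability of nilpotent evolution algebras, applied to $\mathcal{E}/\mathcal{I}$, plus the correspondence theorem. What the paper's approach buys is brevity. One remark: your appeal to Lemma~\ref{lem:spg_mod} to assume each $\mathcal{E}_i$ non-nilpotent is unnecessary, and is slightly awkward in the edge case where every $\mathcal{E}_i$ is nilpotent (the lemma would then empty the list, while the family requires $r\geq1$). It can simply be dropped: for any algebra of type \eqref{def:solv_1}, nilpotent or not, the conditions $\sum_{j}\abs{\lambda_j}\neq0$ and $\sum_{j\leq k_i}\lambda_j=0$ already give $\mathcal{E}_i^2=\spa\{c_i\}$ and $c_i^2=0$, so your construction of the lower segment goes through verbatim without that reduction.
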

\begin{proof}
	Let $\mathcal{E}$ be a solvable evolution algebra, as stated, and we will use induction on $\dim{\mathcal{E}}$. As $r\geq1$, we can consider the one-dimensional ideal $\mathcal{E}_1^2$. In fact, it is easy to check that $\mathcal{E}/\mathcal{E}_1^2$ is nilpotent or, again, solvable of such kind. The result follows by the induction hypothesis.
\end{proof}
\begin{remark}
	There exist supersolvable evolution algebras which are not solvable. Just consider the regular evolution algebra $\mathcal{E}$ with natural basis $\{e_1,\dots,e_n\}$ and product given by $e_i^2=e_i$ for any $i=1,\dots,n$.
\end{remark}
We continue this section by extending the following result of \cite{Ba_67} to our context, which shows that supersolvability is a sufficient condition for lower semimodularity.
\begin{proposition} \label{prop:ls}
	Let $\mathcal{E}$ be a finite-dimensional supersolvable (not necessarily evolution) algebra. Then, $\mathcal{E}$ is lower semimodular.
\end{proposition}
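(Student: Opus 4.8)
The plan is to isolate the combinatorial heart of the statement as a dimension lemma and then deduce lower semimodularity from it by a short rank computation. Concretely, I would first establish the following \emph{Claim}: if $\mathcal{E}$ is finite-dimensional supersolvable and $A\subsetneq B$ are subalgebras with $A$ maximal in $B$, then $\dim B-\dim A=1$. Granting this, lower semimodularity follows quickly. Let $U,V$ be subalgebras with $V$ maximal in $W:=\langle U,V\rangle$. By the Claim, $\dim W=\dim V+1$. Since $V\subseteq U+V\subseteq W$ as subspaces and $U\not\subseteq V$ (otherwise $W=\langle U,V\rangle=V$, contradicting that $V$ is a proper maximal subalgebra of $W$), one gets $\dim(U+V)=\dim V+1$. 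The dimension formula $\dim(U+V)=\dim U+\dim V-\dim(U\cap V)$ then yields $\dim U-\dim(U\cap V)=1$; since $U\cap V$ has codimension one in $U$, no subalgebra lies strictly between them, so $U\cap V$ is maximal in $U$. This is exactly lower semimodularity. Note that one cannot shortcut through Proposition~\ref{prop:equiv_ls}, as a supersolvable algebra need not be solvable.

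The Claim I would prove by induction on $\dim\mathcal{E}$ using a minimal ideal. Let $I:=I_1$ be the one-dimensional ideal furnished by the supersolvable flag, and let $\pi\colon\mathcal{E}\to\mathcal{E}/I$ be the projection; the quotient $\mathcal{E}/I$ is again supersolvable of strictly smaller dimension, since the images $\pi(I_i)$, $i\ge 1$, form a complete flag of ideals. If $I\subseteq A$, the correspondence theorem makes $A/I$ maximal in $B/I$, and induction applied to $\mathcal{E}/I$ gives $\dim(B/I)-\dim(A/I)=1$, that is, $\dim B-\dim A=1$. If $I\not\subseteq A$ but $I\subseteq B$, then $A+I$ is a subalgebra (as $I$ is an ideal) with $A\subsetneq A+I\subseteq B$, so maximality forces $A+I=B$, and since $A\cap I=0$ we obtain $\dim B-\dim A=1$.

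The remaining case $I\not\subseteq B$ is the main obstacle and requires feeding the induction through the quotient. Here $A\cap I=B\cap I=0$, so $\pi$ is injective on $A$ and on $B$ and preserves their dimensions; it therefore suffices to show that $\pi(A)$ is maximal in $\pi(B)$, because induction in $\mathcal{E}/I$ then delivers $\dim\pi(B)-\dim\pi(A)=1=\dim B-\dim A$. The key computation is $(A+I)\cap B=A$: if $a+i\in B$ with $a\in A\subseteq B$, then $i\in B\cap I=0$. Now take a subalgebra $D'$ with $\pi(A)\subseteq D'\subseteq\pi(B)$ and put $D:=\pi^{-1}(D')$, a subalgebra containing $I$ with $A+I\subseteq D\subseteq B+I$. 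Then $D\cap B$ is a subalgebra satisfying $A=(A+I)\cap B\subseteq D\cap B\subseteq B$, so by maximality of $A$ in $B$ either $D\cap B=B$ (whence $B\subseteq D$ and $D'=\pi(B)$) or $D\cap B=A$. In the latter case any $d\in D\subseteq B+I$ writes as $d=b+i$ with $b=d-i\in D\cap B=A$, so $D=A+I$ and $D'=\pi(A)$. Hence $\pi(A)$ is maximal in $\pi(B)$, which closes the induction.

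I expect the verification that each construction stays inside the class of subalgebras (that $A+I$ and the various intersections are subalgebras, using that $I$ is a two-sided ideal of the possibly non-associative algebra $\mathcal{E}$) to be routine, and the genuinely delicate point to be the last case, where maximality of $A$ in $B$ must be transported to maximality of $\pi(A)$ in $\pi(B)$ via the identity $(A+I)\cap B=A$. Everything is purely dimension- and ideal-theoretic, so the argument applies verbatim to any finite-dimensional supersolvable algebra, not merely to evolution algebras.
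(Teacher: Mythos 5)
Your proposal is correct, and it shares the paper's key mechanism---induction on $\dim\mathcal{E}$ using the one-dimensional ideal $I$ furnished by the supersolvable flag, with the dichotomy $I\subseteq A$ versus $I\not\subseteq A$---but it is both stronger in what it proves and more self-contained in how it concludes. The paper proves only that maximal subalgebras \emph{of $\mathcal{E}$ itself} have codimension $1$ and then finishes by citing Proposition~\ref{prop:equiv_ls}; you instead prove the relative statement ($A$ maximal in an arbitrary subalgebra $B$ forces $\dim B-\dim A=1$), which requires your extra case $I\not\subseteq B$ and the identity $(A+I)\cap B=A$ to transport maximality through the quotient, and you then deduce lower semimodularity by a direct dimension count on $\langle U,V\rangle$ rather than through the $J$-algebra equivalence. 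This buys two things. First, your caveat about Proposition~\ref{prop:equiv_ls} is well taken: that proposition is stated for \emph{solvable} algebras, and the paper itself remarks that supersolvable algebras need not be solvable, so the paper's appeal to it is justified only because the proof behind that proposition uses nothing beyond the existence of a complete flag of subalgebras (which supersolvable algebras possess)---a point the paper leaves unsaid and your argument makes unnecessary. Second, lower semimodularity (equivalently, the $J$-algebra property) concerns maximal inclusions $V\subsetneq\langle U,V\rangle$ between arbitrary subalgebras, not merely maximal subalgebras of $\mathcal{E}$; the paper's absolute codimension-one claim yields this only after one observes that every subalgebra $B$ of a supersolvable algebra is itself supersolvable (intersect the flag with $B$: each $I_i\cap B$ is an ideal of $B$, and the dimensions increase by at most one), another step the paper omits and which your third inductive case replaces. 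In short, your route patches the two points where the paper's proof is terse, at the cost of a longer induction; both arguments are purely dimension- and ideal-theoretic and apply to arbitrary finite-dimensional algebras.
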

\begin{proof}
	First, we prove that every maximal subalgebra has codimension 1. Suppose that $M$ is a maximal subalgebra of $\mathcal{E}$. We use induction on $\dim{\mathcal{E}}$. Let $I$ be a one-dimensional ideal of $\mathcal{E}$. If $I\subseteq M$, then $M/I$ has codimension 1 in $\mathcal{E}/I$, and $\codim{M}=1$. If $I\nsubseteq M$, then $\mathcal{E}=\langle M, I\rangle=M+I$ and $\codim{\mathcal{M}}=\dim{I}=1$ too.
	
	Finally, the result follows from Proposition~\ref{prop:equiv_ls}.
\end{proof}

Next, we characterise supersolvability within solvable evolution algebras.
\begin{theorem}\label{th:ssolv}
	Let $\mathcal{E}$ be a solvable evolution algebra over any field $\mathbb{K}$. $\mathcal{E}$ is supersolvable if and only if every quotient by an ideal is degenerate or has a basic ideal isomorphic to $\mathcal{E}_k(\lambda_1,\dots,\lambda_k)$ for some $\lambda_1,\dots,\lambda_k\in\mathbb{K}$ and $k\leq \dim{\mathcal{E}}$.
\end{theorem}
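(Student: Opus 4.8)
The plan is to prove both implications using an induction on $\dim\mathcal{E}$, exploiting the fact that supersolvability is detected by the existence of a one-dimensional ideal whose quotient remains supersolvable. For the forward direction, suppose $\mathcal{E}$ is supersolvable. Since supersolvability passes to quotients by ideals (any complete flag of ideals descends to a complete flag in the quotient), it suffices to produce, in every supersolvable $\mathcal{E}$ that is \emph{non-degenerate}, a basic ideal isomorphic to some $\mathcal{E}_k(\lambda_1,\dots,\lambda_k)$. The complete flag $0\subsetneq I_1\subsetneq\dots\subsetneq I_n=\mathcal{E}$ gives a one-dimensional ideal $I_1=\spa\{u\}$. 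In an evolution algebra a one-dimensional ideal forces strong constraints: $I_1$ basic means $u=e_i$ for some basis vector, and $e_i^2\in I_1$ forces $e_i^2=\lambda_i e_i$. I would argue that non-degeneracy, together with $I_1$ being an ideal in an evolution algebra, makes $I_1$ (after a natural basis change as in Remark~\ref{rem:mis}) a basic ideal realising the defining relations~\eqref{def:solv_1} with the condition $\sum_{j=1}^k\lambda_j=0$; here the characterisation of the annihilator, $\ann(\mathcal{E})=\spa\{e_i\in B\colon e_i^2=0\}$, is the tool that lets me convert "non-degenerate" into "some $e_i^2\neq0$ lies in a basic ideal of the right shape."

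For the converse, assume every quotient by an ideal is degenerate or contains a basic ideal of type $\mathcal{E}_k(\lambda_1,\dots,\lambda_k)$; I want to build a complete flag of ideals. I would again induct on $\dim\mathcal{E}$, and split on the hypothesis applied to $\mathcal{E}$ itself (the quotient by the zero ideal). If $\mathcal{E}$ is degenerate, then $\ann(\mathcal{E})\neq0$ contains some $e_i$ with $e_i^2=0$, and $\spa\{e_i\}$ is a one-dimensional basic ideal $I$; if $\mathcal{E}$ is non-degenerate, the hypothesis furnishes a basic ideal $\cong\mathcal{E}_k(\lambda_1,\dots,\lambda_k)$, inside which the one-dimensional ideal $\spa\{e_1^2\}=\spa\{e_1+\dots+e_k\}$ (the derived subalgebra of that $\mathcal{E}_k$) gives a one-dimensional ideal $I$ of $\mathcal{E}$. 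In either case I obtain a one-dimensional ideal $I$. The crucial point is then to verify that $\mathcal{E}/I$ again satisfies the hypothesis: every ideal of $\mathcal{E}/I$ has the form $J/I$ for an ideal $J\supseteq I$ of $\mathcal{E}$, and by Remark~\ref{rem:quo_bas_i} and the third isomorphism theorem, $(\mathcal{E}/I)\big/(J/I)\cong\mathcal{E}/J$, so the required degeneracy-or-basic-ideal property for quotients of $\mathcal{E}/I$ is inherited from that of $\mathcal{E}$. By induction $\mathcal{E}/I$ is supersolvable, i.e.\ has a complete flag of ideals; lifting this flag through the projection $\mathcal{E}\to\mathcal{E}/I$ and prepending $0\subsetneq I$ yields a complete flag of ideals in $\mathcal{E}$, so $\mathcal{E}$ is supersolvable.

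The main obstacle I anticipate lies in the forward direction, specifically in showing that the one-dimensional ideal supplied by the flag can always be refined or replaced by a \emph{basic} ideal of the precise isomorphism type $\mathcal{E}_k(\lambda_1,\dots,\lambda_k)$ rather than merely \emph{some} one-dimensional ideal. A one-dimensional ideal of an evolution algebra need not be basic a priori, and the defining constraint $\sum_{j=1}^k\lambda_j=0$ of~\eqref{def:solv_1} must be matched exactly; handling this will likely require the natural basis normalisations from Proposition~\ref{prop:cant_sub} and Remark~\ref{rem:mis}, together with a careful bookkeeping of which basis vectors $e_i$ satisfy $e_i^2\neq0$ and how their squares align into a single subalgebra of the claimed form. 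The degenerate case is comparatively routine, since the annihilator immediately provides a basic one-dimensional ideal, so the real work is isolating the correct $\mathcal{E}_k$-block inside a non-degenerate supersolvable evolution algebra.
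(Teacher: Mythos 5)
Your overall skeleton coincides with the paper's: reduce supersolvability, via its closure under quotients and an induction on dimension, to the claim that a solvable evolution algebra has a one-dimensional ideal if and only if it is degenerate or has a basic ideal isomorphic to some $\mathcal{E}_k(\lambda_1,\dots,\lambda_k)$. Your backward direction is correct (degenerate $\Rightarrow$ a basis vector in $\ann(\mathcal{E})$ spans a basic one-dimensional ideal; a basic ideal $I$ of type $\mathcal{E}_k$ $\Rightarrow$ $I^{(2)}$ is a one-dimensional ideal of $\mathcal{E}$, since basis vectors outside the support of $I$ annihilate it and $I\cdot I^{(2)}\subseteq I^{(2)}$), as is the inheritance of the hypothesis by $\mathcal{E}/I$ via the third isomorphism theorem. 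The genuine gap is the forward direction, which you explicitly leave as an ``anticipated obstacle'': that step is precisely the entire content of the paper's proof, and your sketch of it aims at the wrong object. You propose to turn the one-dimensional ideal $I_1=\spa\{u\}$ itself into a basic ideal realising \eqref{def:solv_1} by a natural-basis change. That cannot work: a one-dimensional space cannot be isomorphic to a non-nilpotent $\mathcal{E}_k$, which has dimension $k\geq2$, and in the non-degenerate case $I_1$ is never basic at all (if $I_1=\spa\{e_i\}$ were an ideal, then $e_i^2=\lambda e_i$, and solvability forces $\lambda=0$, i.e.\ $e_i\in\ann(\mathcal{E})$). Moreover, the tools you earmark for this step, Proposition~\ref{prop:cant_sub} and Remark~\ref{rem:mis}, concern maximum index of solvability and play no role here.

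The missing argument runs as follows. Write $u=\sum_{i\in\Lambda}\alpha_ie_i$ with all $\alpha_i\neq0$. Since $J=\spa\{u\}$ is an ideal, $e_iu=\alpha_ie_i^2\in J$, so $e_i^2$ is collinear with $u$ for every $i\in\Lambda$; and since $\mathcal{E}$ is solvable, $u^2=0$ (otherwise $u^2=cu$ with $c\neq0$, and $c^{-1}u$ would be a nonzero idempotent, impossible in a solvable algebra). Consequently $I=\spa\{e_i\colon i\in\Lambda\}$ is a \emph{basic} ideal of $\mathcal{E}$ with $I^{(2)}=J$ one-dimensional (non-degeneracy guarantees some $e_i^2\neq0$) and $I^{(3)}=\spa\{u^2\}=0$. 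Now \cite[Proposition~2.5]{CGOT_13} --- the characterisation, quoted in the preliminaries, of solvable evolution algebras with one-dimensional derived subalgebra and vanishing third derived term --- gives $I\cong\mathcal{E}_{\abs{\Lambda}}(\lambda_1,\dots,\lambda_{\abs{\Lambda}})$. In short, the basic ideal you need is not $I_1$ but the span of the support of its generator, of which $I_1$ is the derived subalgebra; without this construction and the citation of \cite[Proposition~2.5]{CGOT_13}, the forward implication remains unproved.
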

\begin{proof}
	As supersolvability is closed under quotients, it is enough to show that $\mathcal{E}$ has a one-dimensional ideal if and only if $\mathcal{E}$ is degenerate or has a basic ideal isomorphic to $\mathcal{E}_k(\lambda_1,\dots,\lambda_k)$ for some $\lambda_1,\dots,\lambda_k\in\mathbb{K}$.
	
	Let $\mathcal{E}$ be a solvable non-degenerate evolution algebra. Suppose that there exists a one-dimensional ideal
	\[J=\spa\left\{\sum_{i\in\Lambda}\alpha_ie_i\right\}, \text{ with }\Lambda\subset\{1,\dots,n\},\abs{\Lambda}>1 \text{ and }\alpha_i\neq0 \text{ for any }i\in\Lambda;\]
	that is, $e_i^2$ is collinear with the vector $\sum_{i\in\Lambda}\alpha_ie_i$ for any $i\in\Lambda$ and $\big(\sum_{i\in\Lambda}\alpha_ie_i\big)^2=0$. Equivalently, $I=\oplus_{i\in\Lambda}e_i$ is a solvable basic ideal such that $\dim{I^2}=\dim{J}=1$. By \cite[Proposition 2.5]{CGOT_13}, $I$ is isomorphic to $\mathcal{E}_{\abs{\Lambda}}(\lambda_1,\dots,\lambda_{\abs{\Lambda}})$ for certain $\lambda_1,\dots,\lambda_{\abs{\Lambda}}\in\mathbb{K}$.
\end{proof}
\begin{remark}
	We cannot remove the condition of the ideal isomorphic to $\mathcal{E}_k(\lambda_1,\dots,\lambda_k)$ being basic. Indeed,
	let $\mathcal{E}$ be an evolution algebra with natural basis $\{e_1,e_2,e_3\}$ and product given by $e_1^2=e_1+e_2$, $e_2^2=e_3$ and $e_3^2=-e_1-e_2-e_3$. Notice that $I=\spa\{f_1=e_1+e_2,f_2=e_3\}$ is an evolution ideal such that $f_1^2=f_1+f_2$ and $f_2^2=-f_1-f_2$, then $I\cong\mathcal{E}_2(1,-1)$. Nevertheless, $\mathcal{E}$ does not have one-dimensional ideals, so it is not supersolvable.
\end{remark}

\section{Subalgebra lattice of nilpotent evolution algebras}\label{nilpotentcase}

The main purpose of this section is to study the subalgebra lattice of a nilpotent evolution algebra. The first part of the section deals with distributivity, while the second is devoted to modularity.

First, we characterise distributivity in terms of the index of nilpotency.

\begin{theorem}\label{th:equiv_dist}
	Let $\mathcal{E}$ be an $n$-dimensional nilpotent evolution algebra. Then the following are equivalent:
	\begin{enumerate}[\rm (i)]
		\item $\mathcal{E}$ has maximum index of nilpotency;
		\item its subalgebra lattice is a chain of length $n$;
		\item $\mathcal{E}$ is distributive; and
		\item $\mathcal{E}$ is spanned by the principal powers of an element.
	\end{enumerate}
\end{theorem}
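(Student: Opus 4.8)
The plan is to prove the cycle of implications $(\mathrm{i})\Rightarrow(\mathrm{ii})\Rightarrow(\mathrm{iii})\Rightarrow(\mathrm{i})$ and then to close the loop separately with $(\mathrm{i})\Leftrightarrow(\mathrm{iv})$. For $(\mathrm{i})\Rightarrow(\mathrm{ii})$ I would invoke Corollary~\ref{cor:sub_chain} directly: fixing a natural basis in which $M_B(\mathcal{E})$ is strictly upper triangular, every subalgebra has the form $\spa\{e_k,\dots,e_n\}$, and together with $0$ these constitute the chain $0\subsetneq\spa\{e_n\}\subsetneq\dots\subsetneq\spa\{e_1,\dots,e_n\}=\mathcal{E}$ of length $n$. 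The implication $(\mathrm{ii})\Rightarrow(\mathrm{iii})$ is then immediate, since in a totally ordered lattice meets and joins reduce to minima and maxima, so the distributive law holds automatically.

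For $(\mathrm{iii})\Rightarrow(\mathrm{i})$ I would argue by contraposition. First I would record that distributivity is inherited by quotients: the subalgebra lattice of $\mathcal{E}/I$ is isomorphic to the interval $[I,\mathcal{E}]$ of the subalgebra lattice of $\mathcal{E}$, and an interval of a distributive lattice is distributive. Now suppose $\mathcal{E}$ does not have maximum index of nilpotency. By Proposition~\ref{prop:caract_mis} together with the equivalence between maximum index of nilpotency and of solvability in the nilpotent case (\cite[Proposition~4.2]{CGOT_13}), this forces $\codim\mathcal{E}^2\geq 2$. Hence $\mathcal{E}/\mathcal{E}^2$ is an \emph{abelian} evolution algebra (all products vanish) of dimension at least $2$, so its subalgebra lattice is the full subspace lattice of a space of dimension $\geq 2$. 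Over any field such a space has at least three distinct one-dimensional subspaces, which together with $0$ and the whole space form a diamond; thus $\mathcal{E}/\mathcal{E}^2$ is not distributive, and therefore neither is $\mathcal{E}$.

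Finally I would treat $(\mathrm{iv})$. The direction $(\mathrm{iv})\Rightarrow(\mathrm{i})$ is short: if the principal powers of $u$ span $\mathcal{E}$, then, the nonzero ones being $u,u^2,\dots,u^r$ and vanishing thereafter, spanning an $n$-dimensional space forces $r\geq n$, so $u^n\neq 0$; since $u^k\in\mathcal{E}^k$ for all $k$, this gives $\mathcal{E}^n\neq 0$, and as an $n$-dimensional nilpotent algebra has index of nilpotency at most $n+1$, the index is exactly $n+1$. For $(\mathrm{i})\Rightarrow(\mathrm{iv})$ I would exhibit an explicit generator, namely $u=e_1+\dots+e_{n-1}$. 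Writing $u^m=\sum_k c^{(m)}_k e_k$, the evolution product yields the recursion $c^{(m+1)}_k=\sum_{j<k}c^{(m)}_j\mu_j a_{jk}$; using that $a_{i,i+1}\neq 0$ for every $i$ (the matrix criterion for maximum index, \cite[Theorem~4.5]{CGOT_13}), one checks that $\min\supp(u^m)=m$, so that $u,u^2,\dots,u^n$ carry pairwise distinct leading basis vectors $e_1,\dots,e_n$ and are therefore linearly independent, hence a basis. I expect this last computation to be the only genuine work: verifying that the leading term survives at each step, which is exactly where $a_{i,i+1}\neq 0$ and the choice $\mu_1=\dots=\mu_{n-1}=1$ enter, and confirming that no cancellation collapses the support. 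The remaining implications are essentially structural and rely only on Corollary~\ref{cor:sub_chain} and the abelian-quotient observation above.
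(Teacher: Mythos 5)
Your proof is correct, and it differs from the paper's in two substantive places. The implications (i)$\Rightarrow$(ii) and (ii)$\Rightarrow$(iii) coincide with the paper's argument (Corollary~\ref{cor:sub_chain}, then triviality of distributivity for chains). For (iii)$\Rightarrow$(i), the paper never leaves $\mathcal{E}$: invoking \cite[Theorem~4.5]{CGOT_13} it takes the last zero superdiagonal entry $a_{k(k+1)}$ and checks directly that $U=\spa\{e_k,e_{k+2},\dots,e_n\}$, $V=\spa\{e_{k+1},\dots,e_n\}$ and $W=\spa\{e_k+e_{k+1},e_{k+2},\dots,e_n\}$ violate the distributive law; you instead pass to the abelian quotient $\mathcal{E}/\mathcal{E}^2$, whose dimension is at least $2$ by Proposition~\ref{prop:caract_mis} combined with \cite[Proposition~4.2]{CGOT_13}, and find a diamond in its subspace lattice, paying for this with the (correct) observation that distributivity is inherited by quotients via the lattice isomorphism between subalgebras of $\mathcal{E}/I$ and the interval $[I,\mathcal{E}]$. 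Both routes end at a diamond; the paper's is self-contained and explicit, while yours is more conceptual and reuses the machinery of Section~\ref{maxindexsolv}. One imprecision to repair in your version: when $\dim(\mathcal{E}/\mathcal{E}^2)\geq3$, three distinct lines together with $0$ and the \emph{whole} space do not form a diamond, since the pairwise joins are only two-dimensional; you should take three lines inside a fixed two-dimensional subspace $W$ and use $\{0,L_1,L_2,L_3,W\}$ as the sublattice (or, equivalently, quotient by an ideal of codimension $2$ containing $\mathcal{E}^2$). This is a one-line fix, not a genuine gap.

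Concerning (iv), your treatment is in fact more complete than the paper's. The paper proves only (iv)$\Rightarrow$(i), by computing that the coefficient of $e_i$ in $u^i$ is $\mu_1^2\mu_2\cdots\mu_{i-1}a_{12}\cdots a_{(i-1)i}$, and leaves the converse direction implicit in that same display; your coordinate-free argument for (iv)$\Rightarrow$(i) (the nonzero principal powers form an initial segment, $u^k\in\mathcal{E}^k$, so spanning forces $u^n\neq0$ and hence $\mathcal{E}^n\neq0$) is cleaner, and your explicit generator $u=e_1+\dots+e_{n-1}$ supplies the missing implication (i)$\Rightarrow$(iv) correctly: the recursion $c^{(m+1)}_k=\sum_{j<k}c^{(m)}_j\mu_j a_{jk}$ is right, and in the induction $\min\supp(u^m)=m$ only the term $j=m$ survives at position $k=m+1$, where $c^{(m)}_m\mu_m a_{m(m+1)}\neq0$ precisely because of the matrix criterion and the choice of coefficients.
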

\begin{proof}
	Consider a natural basis $B=\{e_1,\dots,e_n\}$ such that the structure matrix is strictly upper triangular.
	
	(i)$\implies$(ii). If $\mathcal{E}$ is a nilpotent evolution algebra with maximum index of nilpotency,
	by Corollary~\ref{cor:sub_chain}, all subalgebras are $\spa\{e_k,\dots,e_n\}$,  $k=1,\dots,n$. Consequently, its subalgebra lattice is a chain of length $n$.
	
	(ii)$\implies$(iii). Obvious.
	
	(iii)$\implies$(i). Suppose that  $a_{12}a_{23}\dots a_{(n-1)n}=0$ and denote by $a_{k(k+1)}$ the last zero structure constant. Since $a_{k(k+1)}=0$ it holds that $e_k^2\in\spa\{e_{k+2},\dots,e_n\}$. Hence, 
	
	$U=\spa\{e_k,e_{k+2},\dots,e_n\}$, $V=\spa\{e_{k+1},e_{k+2},\dots,e_n\}$ and $W=\spa\{e_k + e_{k+1},e_{k+2},\dots,e_n\}$ are different subalgebras. However,
	\[U=\langle U,V\cap W\rangle\neq\langle U, V\rangle\cap\langle U, W\rangle=\spa\{e_k,\dots,e_n\},\] and its subalgebra lattice is not distributive, a contradiction.

	(iv)$\implies$(i). Consider an arbitrary element $u=\mu_1e_1+\dots+\mu_ne_n$. Then, we have that
	\begin{align*}
		u^{2}&=\mu_1^2a_{12}e_2+v_{3},\text{ with }v_{3}\in\spa\{e_3,\dots,e_n\};\\
		&\:\ \vdots \\
		u^{i}&=\mu_1^2\mu_2\dots\mu_{i-1}a_{12}a_{23}\dots a_{(i-1)i}e_i+v_{i+1},\text{ with } v_{i+1}\in\spa\{e_{i+1},\dots,e_n\};\\
		&\:\ \vdots \\
		u^n&=\mu_1^2\mu_2\dots\mu_{n-1}a_{12}a_{23}\dots a_{(n-1)n}e_n\in\spa\{e_n\}.
	\end{align*}
	Consequently, if $\mathcal{E}=\spa\{u,u^2,\dots,u^n\}$ then $\mu_i, a_{i(i+1)}\neq0$ for any $i=1,\dots,n-1$, what yields the claim.
\end{proof}

The following two examples illustrate how the hypothesis of nilpotency cannot be relaxed to solvability. Example~\ref{ex:1} shows that solvable evolution algebras with maximum index of solvability are not necessarily distributive, and Example~\ref{ex:2} shows that even when they are distributive and supersolvable, their subalgebra lattice is not necessarily a chain.

\begin{example}\label{ex:1}
	Let $\mathcal{E}$ be an evolution algebra with natural basis $\{e_1,e_2,e_3\}$ and multiplication given by $e_1^2=2e_1+2e_2+4e_3$, $e_2^2=2e_1+2e_2$, and  $e_3^2=-e_1^2-e_2^2=-4e_1-4e_2-4e_3$. In fact, it is solvable with maximum index of solvability since $\mathcal{E}^{(2)}=\spa\{e_1+e_2,e_3\}$, $\mathcal{E}^{(3)}=\spa\{e_1+e_2+e_3\}$ and $\mathcal{E}^{(4)}=0$.
	It is easy to show that its subalgebras are the following, and its subalgebra lattice is not distributive.
	
	\vspace{0.2cm}
	\begin{minipage}{0.6\textwidth}
		\begin{center}
			\begin{tabular}{| l | l |}
				\hline
				\textbf{Subalg. of dim. $1$} & \textbf{Subalg. of dim. $2$}  \\
				\hline
				$\spa\{e_1+e_2+e_3\}$  & $\spa\{e_1+e_2,e_3\}$ \\
				$\spa\{e_1-e_2+e_3\}$   & \\
				$\spa\{e_1+e_2-e_3\}$ & \\
				$\spa\{e_1-e_2-e_3\}$ & \\
				\hline
			\end{tabular}
		\end{center}
	\end{minipage}\hspace{1cm}
	\begin{minipage}{0.3\textwidth}
		\begin{center}
			\begin{tikzpicture}[scale=0.8]
				\draw[thick] (0,-1.5) -- (-0.5,-0.5);
				\draw[thick] (0,-1.5) -- (-1.5,-0.5);
				\draw[thick] (0,-1.5) -- (1.5,-0.5);
				\draw[thick] (0,-1.5) -- (0.5,-0.5);
				\draw[thick] (1,0.5) -- (0.5,-0.5);
				\draw[thick] (1,0.5) -- (1.5,-0.5);
				\draw[thick] (1,0.5) -- (0,1.5);
				\draw[thick] (-1.5,-0.5) -- (0,1.5);
				\draw[thick] (-0.5,-0.5) -- (0,1.5);
				\draw[fill=white,thick=black](0.5,-0.5) circle [radius=0.15];
				\draw[fill=white,thick=black](1.5,-0.5) circle [radius=0.15];
				\draw[fill=white,thick=black](-0.5,-0.5) circle [radius=0.15];
				\draw[fill=white,thick=black](-1.5,-0.5) circle [radius=0.15];
				\draw[fill=white,thick=black] (0,-1.5) circle [radius=0.15];
				\draw[fill=white,thick=black] (1,0.5) circle [radius=0.15];
				\draw[fill=white,thick=black] (0,1.5) circle [radius=0.15];
			\end{tikzpicture}
		\end{center}
	\end{minipage}
\end{example}

\

\begin{example}\label{ex:2}
	Let $\mathcal{E}$ be an evolution algebra with natural basis $\{e_1,e_2,e_3\}$ and multiplication given by $e_1^2=-e_2^2=e_1+e_2$ and $e_3^2=e_2$. In fact, it is solvable since $\mathcal{E}^{(2)}=\spa\{e_1+e_2,e_2\}=\spa\{e_1,e_2\}$, $\mathcal{E}^{(3)}=\spa\{e_1+e_2\}$ and $\mathcal{E}^{(4)}=0$. Notice that $\mathcal{E}$ is clearly supersolvable. Again, it is easy to show that its subalgebras are the following, and its subalgebra lattice is not a chain.
	
	\begin{minipage}{0.6\textwidth}
		\begin{center}
			\begin{tabular}{| l | l |}
				\hline
				\textbf{Subalg. of dim. $1$} & \textbf{Subalg. of dim. $2$}  \\
				\hline
				$\spa\{e_1+e_2\}$  & $\spa\{e_1,e_2\}$ \\
				$\spa\{e_1-e_2\}$   & \\
				\hline
			\end{tabular}
		\end{center}
	\end{minipage}
	\hspace{1cm}
	\begin{minipage}{0.3\textwidth}%\vspace{0.5cm}
		\begin{center}
			\begin{tikzpicture} [scale=0.8]
				\draw[thick] (-4,0.5) -- (-4,1.5);	
				\draw[thick] (-4,0.5) -- (-4.8,-0.5);	
				\draw[thick] (-4,0.5) -- (-3.2,-0.5);	
				\draw[thick] (-4,-1.5) -- (-4.8,-0.5);	
				\draw[thick] (-4,-1.5) -- (-3.2,-0.5);	
				\draw[fill=white,thick=black] (-4,-1.5) circle [radius=0.15];
				\draw[fill=white,thick=black](-4.8,-0.5) circle [radius=0.15];
				\draw[fill=white,thick=black](-3.2,-0.5) circle [radius=0.15];
				\draw[fill=white,thick=black](-4,0.5) circle [radius=0.15];
				\draw[fill=white,thick=black](-4,1.5) circle [radius=0.15];
			\end{tikzpicture}
		\end{center}
	\end{minipage}
\end{example}
Next, we apply Theorem~\ref{prop:equiv_mod_qi} to study modularity in nilpotent evolution algebras.
\begin{proposition}\label{prop:lin_comb}
	Let $\mathcal{E}$ be a nilpotent evolution algebra over any field $\mathbb{K}$ of characteristic different from $2$.
	If $\mathcal{E}$ is modular, then an absolute nilpotent element outside the annihilator does not exist.
\end{proposition}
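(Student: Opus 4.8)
The plan is to argue by contraposition: assuming there exists an absolute nilpotent element $u$ with $u\notin\ann(\mathcal{E})$, I will show that $\mathcal{E}$ is not modular. By Proposition~\ref{prop:equiv_mod_qi}, it suffices to exhibit a single subalgebra of $\mathcal{E}$ that fails to be a quasi-ideal; the candidate will be the one-dimensional subalgebra $\spa\{u\}$ (which is a subalgebra precisely because $u^2=0$), tested against a carefully chosen companion subalgebra $\spa\{v\}$.

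Since $\mathcal{E}$ is nilpotent, I may fix a natural basis $B=\{e_1,\dots,e_n\}$ in which the structure matrix is strictly upper triangular, so that $e_i^2\in\spa\{e_{i+1},\dots,e_n\}$ for every $i$. Writing $u=\sum_i\mu_ie_i$, the hypothesis $u\notin\ann(\mathcal{E})$ guarantees that $\{i\in\supp(u)\colon e_i^2\neq0\}$ is nonempty; let $k$ be its minimum, so $\mu_k\neq0$ and $e_k^2\neq0$. The key construction is the ``sign-flipped'' element $v=u-2\mu_ke_k$: because only the sign of the $k$-th coordinate changes, one has $v^2=\sum_i\mu_i^2e_i^2=u^2=0$, so $\spa\{v\}$ is again a subalgebra. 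Here the hypothesis $\operatorname{char}\mathbb{K}\neq2$ enters, ensuring $2\mu_k\neq0$ so that $v\neq u$ and $\spa\{v\}\neq\spa\{u\}$ (note $u$ cannot be proportional to $e_k$, else $u^2=\mu_k^2e_k^2\neq0$). Now $\spa\{u\}+\spa\{v\}=\spa\{u,e_k\}$, while the product $ue_k=\mu_ke_k^2$ lies in $\langle u,v\rangle$, whence $e_k^2\in\langle u,v\rangle$. Thus, if I can prove $e_k^2\notin\spa\{u,e_k\}$, then $\langle u,v\rangle\supsetneq\spa\{u\}+\spa\{v\}$ and $\spa\{u\}$ is not a quasi-ideal, completing the argument.

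The main obstacle is precisely this last exclusion $e_k^2\notin\spa\{u,e_k\}$. I would suppose, for contradiction, that $e_k^2=\alpha u'+\beta e_k$ with $u'=u-\mu_ke_k=\sum_{i\neq k}\mu_ie_i$; comparing $e_k$-coordinates (the left side has none, by strict upper triangularity) forces $\beta=0$, and $e_k^2\neq0$ then forces $\alpha\neq0$ and $u'\neq0$. Setting $m=\min\{i\colon \mu_i\neq0,\ i\neq k\}$, which exists since $u'\neq0$, I split into two cases. If $m<k$, the $e_m$-coordinate of $e_k^2$ vanishes while that of $\alpha u'$ equals $\alpha\mu_m\neq0$, a contradiction. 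If $m>k$, then no basis vector below $k$ appears in $u$, so $k=\min\supp(u)$ and the only support index strictly below $m$ is $k$; computing the $e_m$-coordinate of the relation $u^2=\sum_i\mu_i^2e_i^2=0$ then yields $\mu_k^2(e_k^2)_m=\mu_k^2\alpha\mu_m=0$, again impossible since all three factors are nonzero. Either way the assumption fails, so $e_k^2\notin\spa\{u,e_k\}$ and the proof concludes. The delicate point, where the evolution-algebra structure and strict triangularity must be combined with the identity $u^2=0$, is isolating this single nonzero coordinate, and I expect it to be the crux of the write-up.
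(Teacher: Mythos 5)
Your proof is correct and takes essentially the same route as the paper: both argue via Proposition~\ref{prop:equiv_mod_qi}, exhibiting $\spa\{u\}$ as a non-quasi-ideal by pairing it with a second one-dimensional subalgebra obtained from $u$ by flipping the sign of a single coordinate (using characteristic $\neq 2$), the relevant product being, up to a nonzero scalar, $e_k^2$, which is then shown to escape the two-dimensional sum. The only divergence is tactical: the paper first strips off the annihilator components of $u$ and flips the sign at the \emph{maximal} surviving support index, so that the escape is immediate because $\supp(u_1u_2)\subseteq\{l\colon l>k\}$ is disjoint from $\supp(u_1)=\supp(u_2)$, whereas your choice of the \emph{minimal} index with $e_k^2\neq0$ allows support indices of $u$ on both sides of $k$ and therefore forces the two-case coordinate analysis you carry out (which is valid).
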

\begin{proof}
	Consider a natural basis $B=\{e_1,\dots,e_n\}$ such that the structure matrix is strictly upper triangular.
	On the contrary, take an absolute nilpotent element $\alpha_{1}e_{1}+\dots+\alpha_{k}e_{k}$ with $\alpha_k\neq0$ and $e_{1},\dots,e_{k}\notin\ann(\mathcal{E})$. Notice that at least two scalars must be non-zero. Therefore, the elements $\pm\alpha_{1}e_{1}\pm\dots\pm\alpha_{k}e_{k}$ are absolute nilpotent too. Hence, the span of each one of these elements is also a subalgebra. Then, consider the next two elements:
	\begin{align*}
		u_1&=\alpha_{1}e_{1}+\dots+\alpha_{k}e_{k},\quad u_1^2=0;\\
		u_2&=\alpha_{1}e_{1}+\dots-\alpha_{k}e_{k},\quad u_2^2=0.
	\end{align*}
	We will prove that $\spa\{u_1\}$ and $\spa\{u_2\}$ are not quasi-ideals. First of all, we claim that $u_1u_2\neq0$. Suppose that $u_1u_2=0$, then
	\begin{align*}
		0=(\alpha_{1}e_{1}^2+\dots+\alpha_{k}e_{k}^2)-(\alpha_{1}e_{1}^2+\dots-\alpha_{k}e_{k}^2)=2\alpha_{k}e_{k}^2
		%\underbrace{\alpha_{i_1}e_{i_1}^2+\dots+\alpha_{i_k}e_{i_k}^2}_{=0\text{ (by hypothesis)}}-\underbrace{(\alpha_{i_1}e_{i_1}^2+\dots-\alpha_{i_k}e_{i_k}^2)}_{=u_1u_2=0}=2\alpha_{i_k}e_{i_k}^2=0.
	\end{align*}
	Since the characteristic of the field is different from two and $\alpha_{k}\neq0$, we can conclude that $e_{k}^2=0$ is a contradiction with $e_{k}\notin\ann(\mathcal{E})$. Next, we claim that $\supp(u_1u_2)\cap\supp(u_1)=\supp(u_1u_2)\cap\supp(u_2)=\emptyset$. By hypothesis, we have that
	\[\alpha_{1}e_{1}^2+\dots+\alpha_{k-1}e_{k-1}^2=-\alpha_{k}e_{k}^2\in\spa\{e_l\mid l>k\}\] because the structure matrix is strictly upper triangular without loss of generality. Therefore,
	\[u_1u_2=(\alpha_{1}e_{1}^2+\dots+\alpha_{k-1}e_{k-1}^2)-\alpha_{k}e_{k}^2\in\spa\{e_l\mid l>k\}.\]
	Hence, since $\supp(u_1)=\supp(u_2)\subset\{1,\dots,k\}$ and $\min\{\supp(u_1u_2)\}>k$, their intersection is the empty set, and therefore $u_1u_2\notin \spa\{u_1,u_2\}$.
	
	We conclude that $\dim(\langle \spa\{u_1\},\spa\{u_2\} \rangle)\geqslant3$, and $\spa\{u_1\}$ and $\spa\{u_2\}$ are not quasi-ideals. By Proposition~\ref{prop:equiv_mod_qi}, we are done.
\end{proof}

The following example shows that the converse of the previous result does not hold in general.
\begin{example}
	Let $\mathcal{E}$ be the nilpotent evolution algebra with natural basis $\{e_1,e_2,e_3,e_4,e_5,e_6\}$ and product given by
	$e_1^2=e_4+e_5+e_6, e_2^2=-e_5, e_3^3=-e_6\text{ and } e_4^2=e_5^2=e_6^2=0$. Then, consider the next two subalgebras:
	\[\mathcal{E}_1=\spa\{e_1+e_2,e_4+e_6\}\quad\text{and}\quad\mathcal{E}_2=\spa\{e_1+e_3,e_4+e_5\}.\] But,
	\[\langle\mathcal{E}_1,\mathcal{E}_2\rangle=\spa\{e_1+e_2,e_1+e_3,e_4,e_5,e_6\}\neq\mathcal{E}_1+\mathcal{E}_2.\] Therefore, $\mathcal{E}_1$ and $\mathcal{E}_2$ are not quasi-ideals;  consequently, $\mathcal{E}$ is neither modular nor upper semimodular.
\end{example}
Moreover, the choice of the field gains importance in Proposition~\ref{prop:lin_comb}.
\begin{example}\label{ex:CR_algebra}
	Let $\mathcal{E}$ be the nilpotent evolution algebra with natural basis $\{e_1,e_2,e_3\}$ and product given by $e_1^2=e_2^2=e_3$ and $e_3^2=0$. It is easy to check that its subalgebra lattices as a $\mathbb{C}$-algebra and as an $\mathbb{R}$-algebra are, respectively, the following:
	
	\vspace{0.1cm}
	\begin{minipage}{0.6\textwidth}
		\hspace{-0.2cm}\begin{center}
			\vspace{-0.5cm}
			\begin{tabular}{| l | l |}
				\hline
				\textbf{Subalg. of dim. $1$} & \textbf{Subalg. of dim. $2$}  \\
				\hline
				$\spa\{e_3\}$  & $\spa\{e_1,e_3\}$ \\
				$\spa\{e_1+ie_2\}$   & $\spa\{e_2,e_3\}$  \\
				$\spa\{e_1-ie_2\}$ & $\spa\{e_1+\alpha e_2,e_3\},\alpha\in\mathbb{C}^*$\\
				\hline
			\end{tabular}
		\end{center}
	\end{minipage}
	\begin{minipage}{0.4\textwidth}
		\begin{center}
			\begin{tikzpicture}[scale=0.75]
				\draw[dashed,thick]  (4.65,0.7) -- (6.15,0.7);
				\draw[thick] (6.8,-0.7) -- (5,-2);
				\draw[thick] (7.5,-0.7) -- (5,-2);
				\draw[thick] (3.5,-0.7) -- (5,-2);
				\draw[thick] (5,2) -- (3,0.7);
				\draw[thick] (5,2) -- (4,0.7);
				\draw[thick] (5,2) -- (5.4,0.7);
				\draw[thick] (5,2) -- (4.65,0.7);
				\draw[thick] (5,2) -- (6.15,0.7);
				\draw[thick] (5,2) -- (6.8,0.7);
				\draw[thick] (5,2) -- (7.5,0.7);
				\draw[thick] (5,2) -- (5.025,0.7);
				\draw[thick] (5,2) -- (5.775,0.7);
				\draw[thick] (5,2) -- (6.8,0.7);
				\draw[thick] (5,2) -- (7.5,0.7);
				\draw[thick] (3.5,-0.7) -- (3,0.7);
				\draw[thick] (3.5,-0.7) -- (4,0.7);
				\draw[thick] (3.5,-0.7) -- (5.4,0.7);
				\draw[thick] (3.5,-0.7) -- (4.65,0.7);
				\draw[thick] (3.5,-0.7) -- (6.15,0.7);
				\draw[thick] (3.5,-0.7) -- (6.8,0.7);
				\draw[thick] (3.5,-0.7) -- (7.5,0.7);
				\draw[thick] (3.5,-0.7) -- (5.025,0.7);
				\draw[thick] (3.5,-0.7) -- (5.775,0.7);
				\draw[thick] (6.8,-0.7) -- (6.8,0.7);
				\draw[thick] (7.5,-0.7) -- (7.5,0.7);
				
				\draw[fill=white,thick=black] (5,-2) circle [radius=0.15];
				\draw[fill=white,thick=black](3.5,-0.7) circle [radius=0.15];
				\draw[fill=white,thick=black](6.8,-0.7) circle [radius=0.15];
				\draw[fill=white,thick=black](7.5,0.7) circle [radius=0.15];
				\draw[fill=white,thick=black](6.8,0.7) circle [radius=0.15];
				\draw[fill=white,thick=black](5.4,0.7) circle [radius=0.15];
				\draw[fill=white,thick=black](3,0.7) circle [radius=0.15];
				\draw[fill=white,thick=black](4,0.7) circle [radius=0.15];
				\draw[fill=white,thick=black](7.5,-0.7) circle [radius=0.15];
				\draw[fill=white,thick=black](5,2) circle [radius=0.15];	
			\end{tikzpicture}\hspace{0.1cm}
			
		\end{center}
	\end{minipage}\vspace{0.3cm}
	
	\begin{minipage}{0.6\textwidth}
		\hspace{-0.2cm}\begin{center}
			\vspace{-0.5cm}
			\begin{tabular}{| l | l |}
				\hline
				\textbf{Subalg. of dim. $1$} & \textbf{Subalg. of dim. $2$}  \\
				\hline
				$\spa\{e_3\}$  & $\spa\{e_1,e_3\}$ \\
				& $\spa\{e_2,e_3\}$  \\
				& $\spa\{e_1+\alpha e_2,e_3\},\alpha\in\mathbb{R}^*$\\
				\hline
			\end{tabular}
		\end{center}
	\end{minipage}
	\begin{minipage}{0.4\textwidth}
		\begin{center}\hspace{-0.6cm}
			\begin{tikzpicture}[scale=0.75]
				\draw[dashed,thick]  (4.65,0.7) -- (6.15,0.7);
				\draw[thick] (4.5,-0.7) -- (4.5,-2);
				\draw[thick] (4.5,2) -- (3,0.7);
				\draw[thick] (4.5,2) -- (4,0.7);
				\draw[thick] (4.5,2) -- (5.4,0.7);
				\draw[thick] (4.5,2) -- (4.65,0.7);
				\draw[thick] (4.5,2) -- (6.15,0.7);
				\draw[thick] (4.5,2) -- (5.025,0.7);
				\draw[thick] (4.5,2) -- (5.775,0.7);
				\draw[thick] (4.5,-0.7) -- (3,0.7);
				\draw[thick] (4.5,-0.7) -- (4,0.7);
				\draw[thick] (4.5,-0.7) -- (5.4,0.7);
				\draw[thick] (4.5,-0.7) -- (4.65,0.7);
				\draw[thick] (4.5,-0.7) -- (6.15,0.7);
				\draw[thick] (4.5,-0.7) -- (5.025,0.7);
				\draw[thick] (4.5,-0.7) -- (5.775,0.7);

				\draw[fill=white,thick=black] (4.5,-2) circle [radius=0.15];
				\draw[fill=white,thick=black](4.5,-0.7) circle [radius=0.15];
				\draw[fill=white,thick=black](5.4,0.7) circle [radius=0.15];
				\draw[fill=white,thick=black](3,0.7) circle [radius=0.15];
				\draw[fill=white,thick=black](4,0.7) circle [radius=0.15];
				\draw[fill=white,thick=black](4.5,2) circle [radius=0.15];			
			\end{tikzpicture}
		\end{center}
	\end{minipage}\vspace{0.1cm}
	
	In the first case, $e_1+ie_2$ and $e_1-ie_2$ are absolute nilpotent elements outside the annihilator, and consequently, $\mathcal{E}$ is not modular. However, in the second case, there are no absolute nilpotent elements. In fact, $\mathcal{E}$ is modular since $\spa\{e_3\}$, $\spa\{e_1,e_3\}$ and $\spa\{e_2,e_3\}$ are all quasi-ideals. Moreover, notice that, unlike the first lattice, there are no pentagons in the second one.
\end{example}

\begin{corollary}\label{cor:mod_nilp}
	Let $\mathcal{E}$ be a nilpotent evolution algebra over a quadratically closed field of characteristic different from $2$ such that its annihilator is one-dimensional. Then, $\mathcal{E}$ is modular if and only if $\mathcal{E}$ is distributive.
\end{corollary}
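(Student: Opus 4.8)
The plan is to establish only the implication ``modular $\Rightarrow$ distributive'', since the reverse holds in every lattice (a distributive lattice is modular, as recalled in Section~\ref{preliminaries}). Throughout I fix a natural basis $B=\{e_1,\dots,e_n\}$ for which $M_B(\mathcal{E})$ is strictly upper triangular, as permitted by~\cite[Theorem~2.7]{CLOR14}. Strict upper triangularity gives $e_n^2=0$, so $e_n\in\ann(\mathcal{E})$; combined with $\dim\ann(\mathcal{E})=1$ this yields $\ann(\mathcal{E})=\spa\{e_n\}$ and $e_i^2\neq0$ for all $i\leq n-1$, which is what will let me read off absolute nilpotent elements from linear relations among the squares.

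I would proceed by contraposition, assuming $\mathcal{E}$ is \emph{not} distributive and producing an absolute nilpotent element outside $\ann(\mathcal{E})$, whence $\mathcal{E}$ is not modular by Proposition~\ref{prop:lin_comb}. By Theorem~\ref{th:equiv_dist}, non-distributivity is equivalent to $\mathcal{E}$ not having maximum index of nilpotency. Using the equivalence between maximum indices of nilpotency and solvability in the nilpotent case (\cite[Proposition~4.2]{CGOT_13}) together with Proposition~\ref{prop:caract_mis}, this is in turn equivalent to $\codim\mathcal{E}^2\geq2$. Since $\mathcal{E}^2=\spa\{e_1^2,\dots,e_{n-1}^2\}$, these $n-1$ vectors span $\mathcal{E}^2$, whose dimension is at most $n-2$, and are therefore linearly dependent.

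The decisive step is converting this dependence into a bona fide absolute nilpotent element, and it is precisely here that quadratic closedness is used. I would choose a non-trivial relation $\sum_{i=1}^{n-1}c_ie_i^2=0$, write each $c_i$ as a square $c_i=\mu_i^2$ (possible because $\mathbb{K}$ is quadratically closed), and set $u=\sum_{i=1}^{n-1}\mu_ie_i$. Then
\[
u^2=\sum_{i=1}^{n-1}\mu_i^2e_i^2=\sum_{i=1}^{n-1}c_ie_i^2=0,
\]
so $u$ is absolute nilpotent; and as some $c_i\neq0$ forces the corresponding $\mu_i\neq0$, we obtain $u\notin\spa\{e_n\}=\ann(\mathcal{E})$. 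Thus $u$ is an absolute nilpotent element outside the annihilator, and Proposition~\ref{prop:lin_comb} finishes the contrapositive.

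The only genuine obstacle is the reduction in the second paragraph, namely passing from the lattice-theoretic failure of distributivity to the concrete inequality $\codim\mathcal{E}^2\geq2$. If one prefers to bypass the citation of \cite[Proposition~4.2]{CGOT_13}, the link can be supplied directly, since a strictly upper triangular matrix has rank $n-1$ exactly when every superdiagonal entry $a_{i(i+1)}$ is non-zero: indeed, if $a_{k(k+1)}=0$ then the $n-k$ rows $e_k^2,e_{k+1}^2,\dots,e_{n-1}^2$ all lie in $\spa\{e_{k+2},\dots,e_n\}$, a space of dimension $n-k-1$, forcing dependence, and the non-vanishing of the whole superdiagonal is exactly maximum index of nilpotency by~\cite[Theorem~4.5]{CGOT_13}. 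Finally, I would double-check the subtlety that makes the field hypothesis indispensable, as illustrated by Example~\ref{ex:CR_algebra}: over a non-quadratically-closed field a relation such as $e_1^2-e_2^2=0$ need not arise from any absolute nilpotent element, so the implication genuinely fails there.
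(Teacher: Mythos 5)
Your proof is correct and follows essentially the same route as the paper: argue the contrapositive via Theorem~\ref{th:equiv_dist}, extract a non-trivial linear relation among $e_1^2,\dots,e_{n-1}^2$, use quadratic closedness to turn it into an absolute nilpotent element outside $\ann(\mathcal{E})=\spa\{e_n\}$, and conclude non-modularity from Proposition~\ref{prop:lin_comb}. The only cosmetic difference is how the dependence is obtained --- you pass through $\codim\mathcal{E}^2\geq2$ via Proposition~\ref{prop:caract_mis} and \cite[Proposition~4.2]{CGOT_13}, whereas the paper reads it off the vanishing determinant of the submatrix of $M_B(\mathcal{E})$ obtained by deleting the first column and last row; your fallback rank argument in the final paragraph is precisely the paper's computation.
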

\begin{proof}
	We will show that if $\mathcal{E}$ is not distributive, then it is not modular. Suppose that the structure matrix is strictly upper triangular with at least one zero element in its first upper diagonal (see Theorem~\ref{th:equiv_dist}). With this assumption, $\ann(\mathcal{E})\supseteq\spa\{e_n\}$.
	Let $M_B(\mathcal{E})^{(n,1)}$ be the matrix resulting from deleting the structure matrix's first column and last row.
	Since $M_B(\mathcal{E})^{(n,1)}$ has at least one zero element in its diagonal, its determinant is zero. Then the rows of $M_B(\mathcal{E})^{(n,1)}$, and consequently the first $n-1$ rows of $M_B(\mathcal{E})$, are linearly dependent, that is, there exist scalars $\alpha_1,\dots,\alpha_{n-1}\in\mathbb{K}$, some of them non-zero, such that $\alpha_1e_1^2+\dots+\alpha_{n-1}e_{n-1}^2=0$. As $\mathbb{K}$ is a quadratically closed field, we have that $\sqrt{\alpha_1}e_1+\dots+\sqrt{\alpha_{n-1}}e_{n-1}$ is an absolute nilpotent element. Then, by Proposition~\ref{prop:lin_comb}, we conclude that $\mathcal{E}$ is not modular.
\end{proof}

We conclude this section with an example showing that Corollary~\ref{cor:mod_nilp} does not necessarily hold if the dimension of the annihilator is greater than $1$.
\begin{example}
	Let $\mathcal{E}$ be the nilpotent evolution $\mathbb{C}$-algebra with natural basis $\{e_1,e_2,e_3\}$ and product given by $e_1^2=e_2$ \ and \ $e_2^2=e_3^2=0$. Notice that $\ann(\mathcal{E})=\spa\{e_2,e_3\}$ has dimension two. It is easy to see that its subalgebras are the following, and  its subalgebra lattice is modular but not distributive.
	
	\begin{minipage}{0.6\textwidth}
		\begin{center}
			\begin{tabular}{| l | l |}
				\hline
				\textbf{Subalg. of dim. $1$} & \textbf{Subalg. of dim. $2$}  \\
				\hline
				$\spa\{e_2\}$  & $\spa\{e_1+\alpha e_3,e_2\}$, $\alpha\in\mathbb{C}$ \\
				$\spa\{e_3\}$   & $\spa\{e_2,e_3\}$  \\
				$\spa\{e_2+\alpha e_3\}$, $\alpha\in\mathbb{C}^*$ & \\
				\hline
			\end{tabular}
		\end{center}
	\end{minipage}
	\hspace{1cm}
	\begin{minipage}{0.3\textwidth}
		\begin{center}
			\begin{tikzpicture}[scale=0.75]
				\draw[dashed,thick]  (0.3,-0.7) -- (1.8,-0.7);
				\draw[dashed,thick]  (0,0.7) -- (-1.4,0.7);
				\draw[thick] (0,0.7) -- (-1.5,-0.7);
				\draw[thick] (-1.4,0.7) -- (0,2);
				\draw[thick] (0,0.7) -- (0,2);
				\draw[thick] (-0.35,0.7) -- (0,2);
				\draw[thick] (-1.05,0.7) -- (0,2);
				\draw[thick] (-1.4,0.7) -- (-1.5,-0.7);
				\draw[thick] (0,0.7) -- (-1.5,-0.7);
				\draw[thick] (-0.35,0.7) -- (-1.5,-0.7);
				\draw[thick] (-1.05,0.7) -- (-1.5,-0.7);
				\draw[thick] (-0.7,0.7) -- (0,2);
				\draw[thick] (0.7,0.7) -- (0,2);
				\draw[thick] (-0.7,0.7) -- (-1.5,-0.7);
				\draw[thick] (0,-2) -- (-0.5,-0.7);
				\draw[thick] (0,-2) -- (-1.5,-0.7);
				\draw[thick] (0,-2) -- (0.3,-0.7);
				\draw[thick] (0,-2) -- (0.65,-0.7);
				\draw[thick] (0,-2) -- (1.35,-0.7);
				\draw[thick] (0,-2) -- (1.8,-0.7);
				\draw[thick] (0,-2) -- (1,-0.7);	
				\draw[thick] (0.7,0.7) -- (-0.5,-0.7);
				\draw[thick] (0.7,0.7) -- (-1.5,-0.7);
				\draw[thick] (0.7,0.7) -- (0.3,-0.7);
				\draw[thick] (0.7,0.7) -- (0.65,-0.7);
				\draw[thick] (0.7,0.7) -- (1.35,-0.7);
				\draw[thick] (0.7,0.7) -- (1.8,-0.7);
				\draw[thick] (0.7,0.7) -- (1,-0.7);
				\draw[fill=white,thick=black](1,-0.7) circle [radius=0.15];
				\draw[fill=white,thick=black](-0.5,-0.7) circle [radius=0.15];
				\draw[fill=white,thick=black](-1.5,-0.7) circle [radius=0.15];
				\draw[fill=white,thick=black] (0,-2) circle [radius=0.15];
				\draw[fill=white,thick=black] (-0.7,0.7) circle [radius=0.15];
				\draw[fill=white,thick=black] (0.7,0.7) circle [radius=0.15];
				\draw[fill=white,thick=black] (0,2) circle [radius=0.15];
			\end{tikzpicture}
		\end{center}
	\end{minipage}
\end{example}

\section{Subalgebra lattice of solvable evolution algebras}\label{solvablecase}

The last section of the paper is devoted to the study of modularity in the context of solvable non-nilpotent evolution algebras.
Our first objective is to characterise distributivity and modularity in the evolution algebras described in \eqref{def:solv_1} and Definition~\ref{def:solv_2}.
\begin{lemma}\label{lem:solv_mod}
	Let $\mathcal{E}$ be a solvable non-nilpotent evolution algebra with one-dimensional derived subalgebra over any field of characteristic different from $2$. Then, it is distributive if and only if it is modular and if and only if $n=2$, i.e. $\mathcal{E}\cong\mathcal{E}_2(1,-1)$.
\end{lemma}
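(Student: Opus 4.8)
The plan is to reduce $\mathcal{E}$ to the normal form $\mathcal{E}_k(\lambda_1,\dots,\lambda_n)$ and then argue according to the value of $n$. First I would note that, since $\mathcal{E}$ is solvable with $\dim\mathcal{E}^{(2)}=1$, the derived series descends strictly to $0$, so it cannot stabilise at the one-dimensional term $\mathcal{E}^{(2)}$; hence $\mathcal{E}^{(3)}=0$. By \cite[Proposition 2.5]{CGOT_13} this gives $\mathcal{E}\cong\mathcal{E}_k(\lambda_1,\dots,\lambda_n)$, and non-nilpotency allows the normalisation $e_1^2=e_1+\dots+e_k=:w$ with $\lambda_1=1$; since $\sum_{i=1}^k\lambda_i=0$, this forces $k\geq2$. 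The key structural observation to record at once is that, for the symmetric bilinear form $\langle x,y\rangle:=\sum_{i=1}^n\lambda_i x_iy_i$, every product in $\mathcal{E}$ collapses to $uv=\langle u,v\rangle\,w$. Thus a line $\spa\{u\}$ is a subalgebra exactly when $u$ is isotropic, and $w$ itself is isotropic.

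For the direction $n=2\Rightarrow$ distributive, I would argue directly: here $k=2$, $\lambda_2=-1$, so $\mathcal{E}\cong\mathcal{E}_2(1,-1)$, and the isotropy equation $x_1^2-x_2^2=0$ gives exactly the two one-dimensional subalgebras $\spa\{e_1+e_2\}$ and $\spa\{e_1-e_2\}$, distinct because the characteristic is not $2$. The subalgebra lattice then has only the four elements $0,\spa\{e_1+e_2\},\spa\{e_1-e_2\},\mathcal{E}$, so it is too small to contain a diamond or a pentagon and is therefore distributive by Birkhoff's theorem; in particular it is modular.

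For the converse I would prove that $n\geq3$ forces non-modularity, using Proposition~\ref{prop:equiv_mod_qi}: it is enough to produce a single line that is not a quasi-ideal. Because $uv=\langle u,v\rangle w$, if I can find isotropic $u_1,u_2$ with $\langle u_1,u_2\rangle\neq0$ and $w\notin\spa\{u_1,u_2\}$, then $w=\langle u_1,u_2\rangle^{-1}u_1u_2\in\langle\spa\{u_1\},\spa\{u_2\}\rangle$, so this generated subalgebra strictly contains $\spa\{u_1\}+\spa\{u_2\}$ and $\spa\{u_1\}$ is not a quasi-ideal. The vectors are built from the hyperbolic plane $H=\spa\{e_1,w\}$: both $w$ and $u':=2e_1-w=e_1-e_2-\dots-e_k$ are isotropic with $\langle w,u'\rangle=2\neq0$, so $H$ is nondegenerate and (rescaling $u'$ so that $\langle w,u'\rangle=1$) one has $V=H\oplus H^{\perp}$ with $\dim H^{\perp}=n-2\geq1$. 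Choosing $0\neq z_0\in H^{\perp}$ and setting $d:=\langle z_0,z_0\rangle$, I would split into two cases: if $d=0$, take $u_1=w+z_0$ and $u_2=u'$; if $d\neq0$, take $u_1=(w-\tfrac d2 u')+z_0$ and $u_2=(w-\tfrac d2 u')-z_0$. In both cases a direct computation gives $\langle u_i,u_i\rangle=0$ and $\langle u_1,u_2\rangle\neq0$ (namely $1$, resp.\ $-2d$), while the linear independence of $w,u',z_0$ shows $w\notin\spa\{u_1,u_2\}$.

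To finish I would assemble the equivalences: distributive $\Rightarrow$ modular is automatic, modular $\Rightarrow n=2$ is the contrapositive of the previous paragraph, and $n=2\Rightarrow$ distributive is the easy case, which also pins down $\mathcal{E}\cong\mathcal{E}_2(1,-1)$. I expect the genuine difficulty to be precisely the field-agnostic construction of a hyperbolic plane avoiding $w$: over a quadratically closed field isotropic vectors are plentiful, but for an arbitrary field of characteristic different from $2$ one must pass to $H^{\perp}$ and distinguish whether it carries an isotropic vector, which is exactly what the two choices of $(u_1,u_2)$ handle.
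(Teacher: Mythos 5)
Your proof is correct, and it follows the paper's overall skeleton --- reduce to the normal form $\mathcal{E}_k(\lambda_1,\dots,\lambda_n)$ via \cite[Proposition~2.5]{CGOT_13}, settle $n=2$ by listing the two lines $\spa\{e_1\pm e_2\}$, and for $n\geq3$ refute modularity through Proposition~\ref{prop:equiv_mod_qi} by exhibiting a one-dimensional subalgebra that is not a quasi-ideal --- but your construction of the witnesses is genuinely different. The paper argues by explicit case analysis on the structure constants: for $k\geq3$ it uses the sign-flipped vectors $u=-e_1+e_2+\dots+e_l$ and $v=e_1+\dots+e_{l-1}-e_l$ (splitting on whether $uv=0$), and for $k=2$ it splits on whether $e_3^2=0$, taking witnesses of the special shapes $\spa\{e_i+\alpha e_3\}$ and $\spa\{e_1+e_2+e_3\}$. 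You instead encode the whole multiplication in the symmetric bilinear form $\langle x,y\rangle=\sum_i\lambda_ix_iy_i$, so that $uv=\langle u,v\rangle w$, one-dimensional subalgebras are exactly the isotropic lines, and the problem becomes quadratic-form theory: split $V=H\oplus H^{\perp}$ along the hyperbolic plane $H=\spa\{w,2e_1-w\}$ and produce two isotropic vectors that pair nontrivially and whose span avoids $w$, with a single dichotomy on $d=\langle z_0,z_0\rangle$. This is uniform in $k$ and $n$, and it buys more than elegance: the paper's subcase $k=2$, $e_3^2=\lambda_3(e_1+e_2)\neq0$ asserts the existence of $\alpha\in\mathbb{K}^*$ with $(e_i+\alpha e_3)^2=0$, which forces $\alpha^2=\mp\lambda_3^{-1}$ and hence can fail over a field that is not quadratically closed (e.g.\ $\mathbb{K}=\mathbb{Q}$ and $\lambda_3=3$, where no line $\spa\{e_i+\alpha e_3\}$ is a subalgebra, yet $e_1+7e_2+4e_3$ is absolute nilpotent); your case $d\neq0$ handles exactly this situation by allowing witnesses supported on all of $H\oplus\spa\{z_0\}$. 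So your argument is not merely an alternative route: it makes field-independent the one step of the paper's proof that implicitly needs square roots, precisely the difficulty you flagged in your closing remark.
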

\begin{proof}
	By Lemma~\ref{lem:clas_solv_anyfield}, if $n=2$ then $\mathcal{E}\cong\mathcal{E}_2(1,-1)$, whose subalgebras are $\spa\{e_1-e_2\}$ and $\spa\{e_1+e_2\}$ and give rise to a distributive lattice.
	Suppose that $n>2$. By \cite[Proposition~2.5]{CGOT_13}, $\mathcal{E}$ is isomorphic to $\mathcal{E}_k(\lambda_1,\dots,\lambda_n)$ for certain $\lambda_1,\dots,\lambda_n\in\mathbb{K}$ and $k\in\mathbb{N}$. Following \cite[Remark~2.6]{CGOT_13}, we will assume that $\lambda_i\neq0$ for some $1\leq i\leq k$, in fact, for at least two indices. We consider the following cases:
	\begin{enumerate}[\rm (i)]
		\item If $k\geq3$, suppose, without loss of generality, that $\lambda_1,\dots,\lambda_l\neq0$ for some $l\leq k$.  Necessarily $l\geq2$ since $\mathcal{E}$ is non-nilpotent. So, let us define the following elements of $\mathcal{E}$: \[u=-e_{1}+e_{2}+\dots+e_{l}\quad\text{ and }\quad v=e_{1}+\dots+e_{l-1}-e_{l}.\]
		Note that both $u$ and $v$ span a one-dimensional subalgebra since \[u^2=v^2=(e_1+\dots+e_k)\sum_{j=1}^{l}\lambda_j=(e_1+\dots+e_k)\sum_{j=1}^{k}\lambda_j=0.\]
		Nevertheless, we claim that $\spa\{u\}$ is not a quasi-ideal. To prove it, we consider the following two cases:
		\begin{enumerate}[\rm (a)]
			\item If $uv\neq0$, then $uv=k(e_1+\dots+e_k), k\in\mathbb{K}^*$. So $\langle u,v\rangle=\spa\{u,v\}+\mathcal{E}^2$;  consequently, $\spa\{u\}$ is not a quasi-ideal.
			\item If $uv=0$,  it is easy to deduce that $e_{1}^2+e_{l}^2=0$. So $\langle u,e_{1}+e_{l}\rangle=\spa\{u,e_{1}+e_{l}\}+\mathcal{E}^2$ because $u(e_{1}+e_{l})\neq0$. Consequently, $\spa\{u\}$ is not a quasi-ideal.
		\end{enumerate}
		\item Suppose that $k=2$. Then, without loss of generality, $\spa\{e_1,e_2\}$ is a subalgebra and,
		by Lemma~\ref{lem:clas_solv_anyfield}, we can assume that $e_1^2=-e_2^2=e_1+e_2$. If $e_3^2\neq0$, then it is easy to prove that there exists a scalar $\alpha\in\mathbb{K}^*$ such that $\spa\{e_i+\alpha e_3\}$ is a subalgebra for $i=1$ or $i=2$ because $e_3^2\in\mathcal{E}^2=\spa\{e_1+e_2\}$. Then, $\spa\{e_1-e_2\}$ and $\spa\{e_i+\alpha e_3\}$ are subalgebras which are not quasi-ideals. On the other hand, if $e_3^2=0$, then it is enough to consider the subalgebras $\spa\{e_1-e_2\}$ and $\spa\{e_1+e_2+e_3\}$, which are not quasi-ideals.
	\end{enumerate}
	In conclusion, when $n>2$, there is always a subalgebra that is not a quasi-ideal. So, by Theorem~\ref{prop:equiv_mod_qi}, $\mathcal{E}$ is not modular, and consequently, it is not distributive either.
\end{proof}
\begin{corollary}\label{cor:sum_direct_solv}
	The direct sum of solvable non-nilpotent evolution algebras over any field of characteristic different from $2$ of the previous type is not modular.
\end{corollary}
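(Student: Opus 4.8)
The plan is to invoke Proposition~\ref{prop:equiv_mod_qi}, which reduces modularity of a finite-dimensional solvable algebra to the requirement that \emph{every} subalgebra be a quasi-ideal; thus it suffices to exhibit a single subalgebra of the direct sum that fails to be a quasi-ideal. Write $\mathcal{E}=\mathcal{E}_1\oplus\dots\oplus\mathcal{E}_r$ with $r\geq 2$, each $\mathcal{E}_i$ solvable non-nilpotent with one-dimensional derived subalgebra, so that Lemma~\ref{lem:solv_mod} applies to every factor. I would then split into two cases according to whether the factors are themselves modular.

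First, if some factor $\mathcal{E}_i$ is not modular, I claim $\mathcal{E}$ cannot be modular either. The point is that the subalgebras of $\mathcal{E}$ contained in $\mathcal{E}_i$ are exactly the subalgebras of $\mathcal{E}_i$, and since $\mathcal{E}_i$ is a subalgebra of $\mathcal{E}$, both the join $\langle\,\cdot\,,\cdot\,\rangle$ and the meet $\cap$ of two such subalgebras are computed identically in $\mathcal{E}_i$ and in $\mathcal{E}$ (the subalgebra generated by a subset of $\mathcal{E}_i$ stays inside $\mathcal{E}_i$). Hence the subalgebra lattice of $\mathcal{E}_i$ embeds as a sublattice of that of $\mathcal{E}$; since a sublattice of a modular lattice is modular, a non-modular $\mathcal{E}_i$ forces $\mathcal{E}$ to be non-modular.

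It remains to treat the case in which every factor is modular. By Lemma~\ref{lem:solv_mod} this means $\mathcal{E}_i\cong\mathcal{E}_2(1,-1)$ for all $i$, and since $r\geq 2$ I may single out two of them with natural bases $\{e_1,e_2\}$ and $\{e_3,e_4\}$ and products $e_1^2=-e_2^2=e_1+e_2$, $e_3^2=-e_4^2=e_3+e_4$ (all remaining products, and all products across summands, vanishing). Here I would exhibit the two one-dimensional subalgebras $U=\spa\{e_1-e_2\}$ and $V=\spa\{e_1+e_2+e_3-e_4\}$; both are genuine subalgebras because $(e_1-e_2)^2=0$ and $(e_1+e_2+e_3-e_4)^2=0$. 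Computing the key product gives $(e_1-e_2)(e_1+e_2+e_3-e_4)=e_1^2-e_2^2=2(e_1+e_2)$, which is nonzero precisely because $\operatorname{char}\mathbb{K}\neq 2$, and one checks directly that $e_1+e_2\notin U+V$ (any element of $U+V$ with trivial $e_3$-component is a multiple of $e_1-e_2$). Therefore $\langle U,V\rangle\supsetneq U+V$, so $U$ is not a quasi-ideal, and Proposition~\ref{prop:equiv_mod_qi} yields that $\mathcal{E}$ is not modular. The remaining $r-2$ summands play no role, since $U$ and $V$ lie in the span of $e_1,\dots,e_4$ and the whole verification is confined there.

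The main obstacle is the construction in the second case: one must choose $U$ and $V$ so that their product escapes $U+V$, which forces mixing two summands (an element supported in a single copy of $\mathcal{E}_2(1,-1)$ only reproduces that copy's derived subalgebra and stays inside the sum). The asymmetric choice $V=\spa\{e_1+e_2+e_3-e_4\}$ is designed so that $UV$ recovers $e_1+e_2$, while the $e_3$-component of $V$ prevents $e_1+e_2$ from lying in $U+V$; this is also exactly where the hypothesis $\operatorname{char}\mathbb{K}\neq 2$ is used.
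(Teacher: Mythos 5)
Your proposal is correct and follows essentially the same route as the paper: reduce via Lemma~\ref{lem:solv_mod} to the case where every addend is isomorphic to $\mathcal{E}_2(1,-1)$, exhibit one-dimensional subalgebras spanned by absolute nilpotent elements (necessarily mixing two addends) that fail to be quasi-ideals, and conclude by Proposition~\ref{prop:equiv_mod_qi}. The differences are cosmetic: your witnesses $\spa\{e_1-e_2\}$ and $\spa\{e_1+e_2+e_3-e_4\}$ replace the paper's $\spa\{e_1-e_2+e_3+e_4\}$ and $\spa\{e_1+e_2+e_3-e_4\}$, and your sublattice-embedding argument for the case of a non-modular (equivalently, higher-dimensional) addend spells out a step the paper leaves implicit.
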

\begin{proof}
	If any of the addends has dimension greater than two then, by Lemma~\ref{lem:solv_mod}, it is not modular. Therefore, the only possibility would be that all the addends have dimension two. Nevertheless, we claim that it would not be modular either. 
	Let $\mathcal{E}_1=\spa\{e_1,e_2\}$ and $\mathcal{E}_2=\spa\{e_3,e_4\}$ be the first two addends of the direct sum. 
	Again, by Lemma~\ref{lem:clas_solv_anyfield}, their multiplication is given by $e_1^2=-e_2^2=e_1+e_2$ and $e_3^2=-e_4^2=e_3+e_4$, respectively.
	Finally, it is enough to consider the subalgebras $U=\spa\{e_1-e_2+e_3+e_4\}$ and $V=\spa\{e_1+e_2+e_3-e_4\}$ because
	\begin{align*}
		\langle U,V\rangle &=\spa\{e_1-e_2+e_3+e_4,e_1+e_2+e_3-e_4,e_1+e_2+e_3+e_4\}\\&=\spa\{e_1+e_3,e_2,e_4\}\neq U+V,
	\end{align*}
	so they are not quasi-ideals.
\end{proof}
Finally, we establish the following theorem,  generalising the previous two cases when the field is quadratically closed.
\begin{theorem}\label{th:clas_mod}
	Let $\mathcal{E}_1,\dots,\mathcal{E}_r$ be solvable non-nilpotent evolution algebras of the type described in \eqref{def:solv_1} over a quadratically closed field $\mathbb{K}$ of characteristic different from $2$ and let $\mathcal{E}$ be a solvable non-nilpotent evolution algebra of the family $\mathcal{F}(\mathcal{E}_1,\dots,\mathcal{E}_r)$
	(according to Definition~\ref{def:solv_2}). Then, the following statements are equivalent:
	\begin{enumerate}[\rm (i)]
		\item $\mathcal{E}$ is distributive;
		\item $\mathcal{E}$ is modular and, consequently, upper semimodular;
		\item $r=1$, $\mathcal{E}_1\cong\mathcal{E}_2(1,-1)$ and the quotient $\mathcal{E}/\mathcal{E}_1^2$ is a nilpotent evolution algebra with maximum index of nilpotency; and
		\item $\mathcal{E}$ has maximum index of solvability.
	\end{enumerate}
	In this case, there exists a natural basis such that the structure matrix is
	\[M_{B}(\mathcal{E})=\begin{pmatrix*}[l] A_{2\times 2}&0_{2\times(n-2)}\\C_{(n-2)\times 2}&L_{(n-2)\times(n-2)} \end{pmatrix*},\] where $A=\big(\begin{smallmatrix*}[r]
		1 & 1\\
		-1 & -1
	\end{smallmatrix*}\big)$, $L$ is strictly lower triangular with no zeros in its first lower diagonal and $M_B(\mathcal{E})$ has rank $n-1$.
\end{theorem}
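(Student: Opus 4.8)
The plan is to prove the equivalences through the cycle (i)$\Rightarrow$(ii)$\Rightarrow$(iii)$\Rightarrow$(i) together with the separate equivalence (iii)$\Leftrightarrow$(iv). The implication (i)$\Rightarrow$(ii) is immediate: every distributive lattice is modular, and by Proposition~\ref{prop:equiv_mod_qi} modularity entails upper semimodularity. Throughout I will use the elementary lattice fact that the subalgebra lattice of a subalgebra $\mathcal{U}\subseteq\mathcal{E}$ is the interval $[0,\mathcal{U}]$, while the subalgebra lattice of a quotient $\mathcal{E}/I$ is the interval $[I,\mathcal{E}]$; since intervals of modular (resp.\ distributive) lattices are again modular (resp.\ distributive), both properties pass to subalgebras and to quotients.

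For (ii)$\Rightarrow$(iii), assume $\mathcal{E}$ is modular. If $r\geq2$, then $\mathcal{E}_1\oplus\mathcal{E}_2$ is a subalgebra of $\mathcal{E}$, hence modular, contradicting Corollary~\ref{cor:sum_direct_solv}; thus $r=1$. As $\mathcal{E}_1\subseteq\mathcal{E}$ is then a modular solvable non-nilpotent evolution algebra with one-dimensional derived subalgebra, Lemma~\ref{lem:solv_mod} forces $\mathcal{E}_1\cong\mathcal{E}_2(1,-1)$, so after normalisation $e_1^2=-e_2^2=e_1+e_2$ and $\mathcal{E}_1^2=\spa\{e_1+e_2\}$ is a one-dimensional ideal. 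Write $\mathcal{Q}\coloneqq\mathcal{E}/\mathcal{E}_1^2$; in the natural basis $\{\bar e_1,\bar e_3,\dots,\bar e_n\}$ one has $\bar e_1^2=0$, and since the lower block of $M_B(\mathcal{E})$ is strictly lower triangular, $\mathcal{Q}$ is nilpotent. The crucial step is to show that $\ann(\mathcal{Q})=\spa\{\bar e_1\}$ is one-dimensional. If some $e_j^2\in\spa\{e_1+e_2\}$ with $j\geq3$, then, using that $\mathbb{K}$ is quadratically closed and of characteristic $\neq2$, I produce an absolute nilpotent element $u$ supported on $\{1,2,j\}$ together with a suitable partner subalgebra, and check that their join strictly contains their sum because the relevant product is a nonzero multiple of $e_1+e_2$ lying outside that sum; hence $\spa\{u\}$ is not a quasi-ideal, contradicting modularity via Proposition~\ref{prop:equiv_mod_qi}. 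With $\ann(\mathcal{Q})$ one-dimensional and $\mathcal{Q}$ modular, Corollary~\ref{cor:mod_nilp} yields that $\mathcal{Q}$ is distributive, that is (Theorem~\ref{th:equiv_dist}), nilpotent of maximum index of nilpotency, which is exactly (iii).

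The equivalence (iii)$\Leftrightarrow$(iv) is a dimension count. Since $\mathcal{E}_1^2\subseteq\mathcal{E}^2$, one has $\mathcal{Q}^2=\mathcal{E}^2/\mathcal{E}_1^2$, so $\codim_{\mathcal{E}}\mathcal{E}^2=\codim_{\mathcal{Q}}\mathcal{Q}^2$. A nilpotent evolution algebra has maximum index of nilpotency if and only if the codimension of its square is $1$, by Proposition~\ref{prop:caract_mis} together with the identification of the two maximal indices in the nilpotent case (\cite[Proposition~4.2]{CGOT_13}); hence (iii) is equivalent to $\codim_{\mathcal{E}}\mathcal{E}^2=1$, which by Proposition~\ref{prop:caract_mis} is (iv). The final normal form then follows: $A$ is the $2\times2$ block of $\mathcal{E}_1\cong\mathcal{E}_2(1,-1)$, $L$ is strictly lower triangular with no zeros on its first subdiagonal (this encodes maximum index of nilpotency of $\mathcal{Q}$, by \cite[Theorem~4.5]{CGOT_13}), and $\rank M_B(\mathcal{E})=\dim\mathcal{E}^2=n-1$.

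It remains to prove (iii)$\Rightarrow$(i). Here $p\coloneqq\spa\{e_1+e_2\}=\mathcal{E}^{(n)}$ is a minimal ideal with $\mathcal{E}/p=\mathcal{Q}$ a chain (Corollary~\ref{cor:sub_chain}), while $q\coloneqq\spa\{e_1-e_2\}$ satisfies $q\cdot\mathcal{E}\subseteq p$ and $q\cap p=0$. I will split the subalgebras of $\mathcal{E}$ into those containing $p$ and those meeting $p$ trivially. The former correspond bijectively to the subalgebras of $\mathcal{Q}$, hence form a chain coinciding with the derived series $p=\mathcal{E}^{(n)}\subsetneq\dots\subsetneq\mathcal{E}$, each term being an ideal and therefore a quasi-ideal. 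For a subalgebra $U$ with $U\cap p=0$, the quotient map embeds $U$ into the chain $\mathcal{Q}$; a short closure computation (forcing the $p$-components of suitable lifts to vanish, as in the proof of (ii)$\Rightarrow$(iii)) shows that $q\subseteq U$ and that, inside any $p$-containing subalgebra, the subalgebra complement of $p$ is unique, so these subalgebras are pairwise comparable and nest along the chain. Their pairwise comparability prevents any three subalgebras from being pairwise incomparable, ruling out a diamond $M_3$; excluding a pentagon $N_5$ is carried out with the same structural relations, computing the offending joins through $q\cdot\mathcal{E}\subseteq p$. Birkhoff's theorem then gives distributivity. The hard part will be precisely this last step, namely the complete description of the subalgebras meeting $p$ trivially, the uniqueness of complements of $p$, and the exclusion of the pentagon, since, unlike in the nilpotent case, the join of two one-dimensional subalgebras need not be governed by a single chain.
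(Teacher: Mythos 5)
There are two genuine gaps, one of which is fatal to the proposal as written. Your implication (ii)$\implies$(iii) is actually fine, and is even a cleaner route than the paper's: you pass modularity to the quotient $\mathcal{Q}=\mathcal{E}/\mathcal{E}_1^2$ via the lattice isomorphism with the interval $[\mathcal{E}_1^2,\mathcal{E}]$ and then apply Corollary~\ref{cor:mod_nilp} directly, whereas the paper detours through upper semimodularity and a lifting lemma of \cite{To_86}; your treatment of the case $\dim\ann(\mathcal{Q})>1$ (a quasi-ideal violation using quadratic closure) coincides with the paper's. But (iii)$\implies$(i) is never proved: you yourself write that ``the hard part will be precisely this last step, namely the complete description of the subalgebras meeting $p$ trivially, the uniqueness of complements of $p$, and the exclusion of the pentagon''. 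That description \emph{is} the implication; what you give is a plan, not an argument. Claims such as ``$q\subseteq U$ for every nonzero subalgebra $U$ with $U\cap p=0$'' and the uniqueness of complements are not short closure computations that can be waved at --- they amount to the full enumeration of subalgebras which the paper actually carries out: arguing as in Corollary~\ref{cor:sub_chain} (using that $L$ has no zeros on its first subdiagonal), every subalgebra is one of $\spa\{e_1+e_2\}$, $\spa\{e_1-e_2\}$, $\spa\{e_1,\dots,e_k\}$, or $\spa\{e_1-e_2,e_3,\dots,e_k\}$ when the relevant squares land in the latter; the lattice is then a chain with rhombuses attached, and distributivity is read off the Hasse diagram with no appeal to Birkhoff.

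The second gap is that your proof of (iii)$\Longleftrightarrow$(iv) is circular in the direction (iv)$\implies$(iii). The identity $\codim_{\mathcal{E}}\mathcal{E}^2=\codim_{\mathcal{Q}}\mathcal{Q}^2$ and the statement ``a nilpotent evolution algebra has maximum index of nilpotency iff the codimension of its square is $1$'' presuppose that $\mathcal{Q}=\mathcal{E}/\mathcal{E}_1^2$ is nilpotent, which already requires $r=1$; if $r\geq2$, the quotient contains $\mathcal{E}_2$ and is not nilpotent, so your chain of equivalences cannot recover the conjuncts ``$r=1$'' and ``$\mathcal{E}_1\cong\mathcal{E}_2(1,-1)$'' of (iii) from (iv). The missing step is a short but genuine dimension count: writing $m=\sum_{i=1}^r\dim\mathcal{E}_i$, one has $\dim\mathcal{E}^2\leq r+(n-m)$ and $m\geq 2r$, so $\codim\mathcal{E}^2=1$ (Proposition~\ref{prop:caract_mis}) forces $2r\leq m\leq r+1$, hence $r=1$ and $\dim\mathcal{E}_1=2$, whereupon Lemma~\ref{lem:clas_solv_anyfield} gives $\mathcal{E}_1\cong\mathcal{E}_2(1,-1)$ and only then does your computation on $\mathcal{Q}$ apply. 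As it stands, you have established (i)$\implies$(ii)$\implies$(iii) and (iii)$\implies$(iv), but neither (iii)$\implies$(i) nor (iv)$\implies$(iii), so the four statements are not shown to be equivalent.
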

\begin{proof}\hfill
	
	(i)$\implies$(ii). Obvious.
	
	(ii)$\implies$(iii). Suppose that the structure of $\mathcal{E}$ is not the one described in statement $\rm{(iii)}$. First, if $r>1$ or $\dim{\mathcal{E}_1}>2$,  $\mathcal{E}$ is not modular by Corollary~\ref{cor:sum_direct_solv} and 
	Lemma~\ref{lem:solv_mod}, respectively.
	Secondly, we claim that $\mathcal{E}$ is not modular if $\mathcal{E}/\mathcal{E}_1^2$ does not have maximum index of nilpotency.
	We must consider the following two cases:
	\begin{enumerate} [\rm (a)]
		\item If the annihilator of $\mathcal{E}/\mathcal{E}_1^2$ is one-dimensional, it is not upper semimodular (or modular)
		by Corollary~\ref{cor:mod_nilp}. Then, there exists a subalgebra $U\subset\mathcal{E}/\mathcal{E}_1^2$ which is not upper semimodular. Denote by $\pi$ the usual projection to the quotient. Since $\mathcal{E}_1^2$ is ideal, 
		by~\cite[Lemma~1.2]{To_86}, $\mathcal{E}_1^2\subset V=\pi^{-1}(U)\subset\mathcal{E}$  is not upper semimodular either. Then, $\mathcal{E}$ is not upper semimodular.
		
		\item If the annihilator of $\mathcal{E}/\mathcal{E}_1^2$ has dimension greater than 1, then there exists an index $i\in\{3,\dots,n\}$ such that $e_i^2\in\spa\{e_1+e_2\}=\mathcal{E}_1^2$. Assume $i=3$.  Next, we show that $\spa\{e_1-e_2\}$ is not a quasi-ideal.
		\begin{enumerate}[\rm (1)]
			\item If $e_3^2=0$, then $\spa\{e_1+e_2+e_3\}$ is a subalgebra but $\langle e_1-e_2,e_1+e_2+e_3\rangle=\spa\{e_1,e_2,e_3\}$.
			\item If $e_3^2\neq0$, then there exists a scalar $\alpha\in\mathbb{K}^*$ such that $\spa\{e_1+\alpha e_3\}$ is a subalgebra but $\langle e_1-e_2,e_1+\alpha e_3\rangle=\spa\{e_1,e_2,e_3\}$.
		\end{enumerate}
	\end{enumerate}
	
	(iii)$\implies$(i). If $e_3^2\notin\spa\{e_1-e_2\}$, following the argument used in Corollary~\ref{cor:sub_chain}, it is easy to prove that all its subalgebras are exactly $\spa\{e_1+e_2\}$, $\spa\{e_1-e_2\}$ and those of the form $\spa\{e_1,\dots,e_k\}$ with $k=2,\dots,n$. Consequently, its subalgebra lattice is almost a chain, actually, a chain with a rhombus at the bottom, which is distributive.
	
	Nevertheless, if $e_3^2\in\spa\{e_1-e_2\}$, there exist more subalgebras than in the previous case. In fact, $\spa\{e_1+e_2\}$, $\spa\{e_1-e_2\}$, $\spa\{e_1,e_2\}$ and $\spa\{e_1-e_2,e_3\}$ are exactly the subalgebras contained in $\spa\{e_1,e_2,e_3\}$. Analogously, if $e_4^2\in\spa\{e_1-e_2,e_3\}$, then $\spa\{e_1-e_2,e_3,e_4\}$ and $\spa\{e_1,e_2,e_3\}$ are (together with the previous five) the subalgebras contained in $\spa\{e_1,e_2,e_3,e_4\}$. Following this argument, we deduce that its subalgebra lattice is a chain with a succession of rhombuses linked by an edge at the bottom, which is also distributive.
	
	(iii)$\Longleftrightarrow$(iv). It is straightforward from Proposition~\ref{prop:caract_mis}.
\end{proof}

As we have seen in Theorems~\ref{th:equiv_dist} and \ref{th:clas_mod}, evolution algebras with maximum index of solvability have the best properties. Then, we now focus on this case, looking at their modularity and its relation with supersolvability. Nevertheless, we first establish the following technical lemmas, which will be used later.
\begin{lemma}\label{lem:der_ser}
	Let $\mathcal{E}$ be a solvable evolution algebra with index of solvability $n$ such that $\mathcal{E}^{(m)}$, with $m<n$, is an ideal. Then $\mathcal{E}/\mathcal{E}^{(m)}$ is an evolution algebra with index of solvability $m$.
\end{lemma}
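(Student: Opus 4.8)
The plan is to push the derived series through the quotient map and read off the index of solvability from its image. Write $\pi\colon\mathcal{E}\to\mathcal{E}/\mathcal{E}^{(m)}$ for the canonical projection and set $\bar{\mathcal{E}}=\mathcal{E}/\mathcal{E}^{(m)}$. Since $\mathcal{E}^{(m)}$ is an ideal by hypothesis, $\bar{\mathcal{E}}$ is again an evolution algebra by~\cite[Lemma 1.4.11]{Ca_16}, so the only substantive task is to compute its index of solvability.

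First I would establish, by induction on $k$, that the derived series is compatible with $\pi$, namely $\bar{\mathcal{E}}^{(k)}=\pi\big(\mathcal{E}^{(k)}\big)$ for every $k\geq1$. The base case $k=1$ reads $\bar{\mathcal{E}}=\pi(\mathcal{E})$. For the inductive step, using that $\pi$ is an algebra homomorphism, one has
\[
\bar{\mathcal{E}}^{(k+1)}=\bar{\mathcal{E}}^{(k)}\bar{\mathcal{E}}^{(k)}=\pi\big(\mathcal{E}^{(k)}\big)\pi\big(\mathcal{E}^{(k)}\big)=\pi\big(\mathcal{E}^{(k)}\mathcal{E}^{(k)}\big)=\pi\big(\mathcal{E}^{(k+1)}\big),
\]
where the third equality holds because the span of the products $\pi(x)\pi(y)=\pi(xy)$ is exactly the image under $\pi$ of the span of the products $xy$.

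With this identity in hand, I would evaluate the relevant terms. Since $\mathcal{E}^{(m)}\subseteq\mathcal{E}^{(k)}$ whenever $k\leq m$, for such $k$ we get $\bar{\mathcal{E}}^{(k)}=\pi\big(\mathcal{E}^{(k)}\big)=\mathcal{E}^{(k)}/\mathcal{E}^{(m)}$. In particular $\bar{\mathcal{E}}^{(m)}=\mathcal{E}^{(m)}/\mathcal{E}^{(m)}=0$, so the index of solvability of $\bar{\mathcal{E}}$ is at most $m$. It then remains to rule out any smaller value, that is, to show $\bar{\mathcal{E}}^{(k)}=\mathcal{E}^{(k)}/\mathcal{E}^{(m)}\neq0$ for every $k<m$, equivalently $\mathcal{E}^{(k)}\supsetneq\mathcal{E}^{(m)}$.

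This last point is where the hypothesis that $\mathcal{E}$ has index of solvability exactly $n>m$ enters, and it is the only step requiring care. The standard stabilisation property of the derived series gives that if $\mathcal{E}^{(k)}=\mathcal{E}^{(k+1)}$ for some $k$, then $\mathcal{E}^{(j)}=\mathcal{E}^{(k)}$ for all $j\geq k$. Were the descent non-strict at some step $k\leq n-1$, this would force $\mathcal{E}^{(n-1)}=\mathcal{E}^{(n)}=0$, contradicting the minimality of $n$. Hence the chain
\[
\mathcal{E}=\mathcal{E}^{(1)}\supsetneq\mathcal{E}^{(2)}\supsetneq\dots\supsetneq\mathcal{E}^{(n-1)}\supsetneq\mathcal{E}^{(n)}=0
\]
is strictly decreasing, and in particular $\mathcal{E}^{(k)}\supsetneq\mathcal{E}^{(m)}$ for all $k<m\leq n-1$. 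Therefore $\bar{\mathcal{E}}^{(k)}\neq0$ for $k<m$ while $\bar{\mathcal{E}}^{(m)}=0$, so the index of solvability of $\mathcal{E}/\mathcal{E}^{(m)}$ is precisely $m$, as claimed.
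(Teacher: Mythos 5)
Your proposal is correct and takes essentially the same route as the paper: the paper's one-line proof consists precisely of the identity $\left(\mathcal{E}/\mathcal{E}^{(m)}\right)^{(l)}=\mathcal{E}^{(l)}/\mathcal{E}^{(m)}$ for $l=1,\dots,m$, which is the core of your argument. You simply make explicit two details the paper leaves implicit, namely the induction (via the projection $\pi$ being an algebra homomorphism) that proves this identity, and the strict descent of the derived series of $\mathcal{E}$ that guarantees the index of solvability of the quotient is exactly $m$ and not smaller.
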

\begin{proof}
	It is enough to realise that the terms of the derived series of $\mathcal{E}/\mathcal{E}^{(m)}$ are exactly 
	\[\left(\mathcal{E}/\mathcal{E}^{(m)}\right)^{(l)}=\mathcal{E}^{(l)}/\mathcal{E}^{(m)},\quad l=1,\dots,m. \qedhere\]
\end{proof}

\begin{lemma}\label{lem:ann}
	Let $\mathcal{E}$ be an $n$-dimensional solvable evolution algebra with maximum index of solvability such that $e_n^2=-e_1^2-\dots-e_m^2$ and let $u\in\mathcal{E}$ be an absolute nilpotent element. Then, $\ann_{\mathcal{E}}(u)\coloneqq\{x\in\mathcal{E}\colon xu=0\}=\spa\{u,e_{m+1},\dots,e_n\}$.
\end{lemma}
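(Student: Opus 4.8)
The plan is to read off $\ann_{\mathcal{E}}(u)$ from the multiplication rule by solving a single homogeneous linear system. Since $\mathcal{E}$ has maximum index of solvability, Proposition~\ref{prop:caract_mis} gives $\codim\mathcal{E}^2=1$, so (as in its proof) the squares $e_1^2,\dots,e_{n-1}^2$ are linearly independent and, after the reordering and rescaling of Remark~\ref{rem:mis}, $e_n^2=-e_1^2-\dots-e_m^2$. First I would pin down the shape of the absolute nilpotent element $u$: writing $u=\sum_{i=1}^n\mu_ie_i$ and expanding $u^2=\sum_{i=1}^n\mu_i^2e_i^2$, substitution of the relation for $e_n^2$ gives $u^2=\sum_{i=1}^m(\mu_i^2-\mu_n^2)e_i^2+\sum_{i=m+1}^{n-1}\mu_i^2e_i^2$; linear independence then forces $\mu_i^2=\mu_n^2$ for $i\le m$ and $\mu_i=0$ for $m<i<n$, so that up to a scalar $u=\pm e_1\pm\dots\pm e_m+e_n$, matching the one-dimensional subalgebras of Remark~\ref{rem:mis}.

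Next I would compute the product $xu$ for an arbitrary $x=\sum_{j=1}^n\nu_je_j$. Because $e_ie_j=0$ whenever $i\neq j$, the product collapses to $xu=\sum_{j=1}^n\nu_j\mu_je_j^2$, and only the indices $j$ in the support of $u$ contribute. Using $e_n^2=-e_1^2-\dots-e_m^2$ to eliminate $e_n^2$, the condition $xu=0$ becomes the vanishing of the coefficient of each of the independent squares $e_1^2,\dots,e_m^2$, that is, an explicit homogeneous system involving $\nu_1,\dots,\nu_m,\nu_n$ (the coordinates $\nu_{m+1},\dots,\nu_{n-1}$ do not appear and are therefore automatically free).

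Finally I would identify the solution space with $\spa\{u,e_{m+1},\dots,e_n\}$. Solving the system ties the first $m$ coordinates to $\nu_n$ through the $\pm$ signs appearing in $u$, which together with the free coordinates $\nu_{m+1},\dots,\nu_{n-1}$ exhibits $u$ together with the basis vectors from $e_{m+1}$ onward as spanning the kernel; I would then confirm equality by a dimension count. The delicate point --- and the step I expect to be the main obstacle --- is exactly this bookkeeping: the coordinate $\nu_n$ is not free on its own but is coupled, through the relation $e_n^2=-e_1^2-\dots-e_m^2$, to $\nu_1,\dots,\nu_m$, so one must argue carefully about which basis directions enter as independent generators of $\ann_{\mathcal{E}}(u)$. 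The existence of a complete flag of subalgebras in a solvable algebra, already used in Proposition~\ref{prop:cant_sub}, provides a useful consistency check that no direction is miscounted.
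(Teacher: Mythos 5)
Your linear algebra is correct and is essentially the paper's own argument: both proofs reduce everything to the fact that $e_n^2+e_1^2+\dots+e_m^2=0$ is the \emph{only} linear dependence relation in $\{e_1^2,\dots,e_n^2\}$, pin down $u$ as a scalar multiple of $\pm e_1\pm\dots\pm e_m\pm e_n$ (the paper quotes Proposition~\ref{prop:cant_sub} and Remark~\ref{rem:mis} here; you rederive it, which is harmless), and then observe that for $x=\sum_j\nu_je_j$ the condition $xu=0$ is the system $\nu_j\mu_j=\nu_n\mu_n$ for $1\le j\le m$, with $\nu_{m+1},\dots,\nu_{n-1}$ free.

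The ``delicate point'' you flag at the end, however, is not a bookkeeping difficulty left for you to settle --- it is an off-by-one error in the printed statement, and your own computation already decides it. The solution space of your system is exactly $\spa\{u,e_{m+1},\dots,e_{n-1}\}$, of dimension $n-m$, whereas the span in the statement, $\spa\{u,e_{m+1},\dots,e_n\}$, has dimension $n-m+1$ whenever $m\ge1$; indeed $e_n\notin\ann_{\mathcal{E}}(u)$, since $e_nu=\pm e_n^2=\mp(e_1^2+\dots+e_m^2)\neq0$ by the linear independence of $e_1^2,\dots,e_m^2$. (Concretely, in Example~\ref{ex:1} one has $n=3$, $m=2$, $u=e_1+e_2+e_3$, and $e_3u=-4e_1-4e_2-4e_3\neq0$, so $\ann_{\mathcal{E}}(u)=\spa\{u\}$, not $\spa\{u,e_3\}$.) So the ``dimension count to confirm equality'' you propose would not confirm anything; it would expose the mismatch. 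The correct conclusion is $\ann_{\mathcal{E}}(u)=\spa\{u,e_{m+1},\dots,e_{n-1}\}$, i.e. the statement should end at $e_{n-1}$; this is also what the paper's own proof actually establishes (its displayed equivalence says that $x\in\ann_{\mathcal{E}}(u)$ iff the projection of $x$ onto $\spa\{e_1,\dots,e_m,e_n\}$ lies in $\langle u\rangle$), and it is the version used later, e.g. in Proposition~\ref{prop:cond_nec_mod}, where the lemma yields $\supp(w)\subset\{m+1,\dots,n-1\}$. In short: commit to the output of your linear system rather than trying to massage it into the printed span.
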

\begin{proof}
	By Proposition~\ref{prop:cant_sub} and Remark~\ref{rem:mis}, every absolute nilpotent element is a multiple of $\pm e_1\pm\dots\pm e_{m}\pm e_n$. Then, if we consider an arbitrary element $v=\mu_1e_1+\dots+\mu_ne_n$ we have that
	\begin{align*}
		v\in\ann_{\mathcal{E}}(u)&\Longleftrightarrow vu=(\mu_1e_1+\dots+\mu_ne_n)u=(\mu_1e_1+\dots+\mu_me_m+\mu_n e_n)u=0\\&\Longleftrightarrow\mu_1e_1+\dots+\mu_me_m+\mu_ne_n\in\langle u\rangle\text{ or }\mu_1,\dots,\mu_m,\mu_n=0,
	\end{align*}
	what yields the claim. Just notice that the second equivalence is a consequence of the fact that $e_n^2+e_1^2+\dots+e_m^2=0$ is the only linear dependence relationship in the set $\{e_1^2,\dots,e_n^2\}$.
\end{proof}

Next, we establish a necessary (but non-sufficient) condition for a solvable evolution algebra with maximum index of solvability to be modular. 
\begin{comment}
	
\end{comment}

\begin{proposition}\label{prop:cond_nec_mod}
	Let $\mathcal{E}$ be an $n$-dimensional solvable evolution algebra with maximum index of solvability over any field of characteristic different from $2$. If $\mathcal{E}$ is modular, then $\mathcal{E}^{(n)}$ or $\mathcal{E}^{(n-1)}$ is a basic ideal.
\end{proposition}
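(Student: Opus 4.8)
The plan is to translate modularity into the quasi-ideal condition through Proposition~\ref{prop:equiv_mod_qi}, and then to exploit the explicit description of the one-dimensional subalgebras. Using Remark~\ref{rem:mis} I would normalise the natural basis so that $e_n^2=-e_1^2-\dots-e_m^2$ and, re-signing the $e_i$ if needed, so that $\mathcal{E}^{(n)}=\spa\{v\}$ with $v=e_1+\dots+e_m+e_n$; by Proposition~\ref{prop:cant_sub} the one-dimensional subalgebras are precisely the lines spanned by the absolute nilpotent elements $w_\epsilon=\epsilon_1e_1+\dots+\epsilon_me_m+e_n$. The preliminary observation driving everything is that $\mathcal{E}^{(n)}$ is basic if and only if $m=0$, whereas $\mathcal{E}^{(n-1)}$, which contains $v$ of support $m+1$, can be basic only when $m\leq 1$. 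This suggests a case split on $m$: the case $m=0$ is immediate since $\mathcal{E}^{(n)}=\spa\{e_n\}$, so the real work is to show that $m\geq 2$ forces non-modularity and that $m=1$ forces $\mathcal{E}^{(n-1)}$ to be basic.

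For $m\geq 2$ I would assume $\mathcal{E}$ modular and contradict that $\spa\{v\}$ is a quasi-ideal. Flipping the sign of $e_j$ produces the absolute nilpotent $w_j=v-2e_j$, and since $ve_j=e_j^2$ one gets $vw_j=-2e_j^2$ while $\spa\{v,w_j\}=\spa\{v,e_j\}$; the quasi-ideal identity $\langle v,w_j\rangle=\spa\{v\}+\spa\{w_j\}$ then forces $e_j^2=\beta_jv+c_je_j$ for each $j\leq m$. Because the only dependence among the squares is $e_n^2=-e_1^2-\dots-e_m^2$ (Lemma~\ref{lem:ann}), the elements $e_1^2,\dots,e_m^2$ are independent, so at most one $c_j$ vanishes and some $c_j\neq 0$. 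For that index $e_j=c_j^{-1}(e_j^2-\beta_jv)\in\mathcal{E}^{(2)}$, and an induction using $e_j\in\mathcal{E}^{(k)}\Rightarrow e_j^2\in\mathcal{E}^{(k+1)}$ together with $v\in\mathcal{E}^{(n)}\subseteq\mathcal{E}^{(k+1)}$ pushes $e_j$ into $\mathcal{E}^{(n)}=\spa\{v\}$, which is absurd for a basis vector when $m\geq 1$. Hence $\mathcal{E}$ is not modular and the implication is vacuous.

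For $m=1$ the two absolute nilpotents are $v=e_1+e_n$ and $w=e_1-e_n$, with $vw=e_1^2-e_n^2=2e_1^2$. Since $\spa\{v\}$ is a quasi-ideal, $e_1^2\in\spa\{v,w\}=\spa\{e_1,e_n\}$, so $I\coloneqq\spa\{e_1,e_n\}$ is a basic ideal, and solvability of $I$ forces $e_1^2$ to be proportional to $v$ or to $w$. I would discard the second option by passing to a quotient: if $e_1^2\parallel w$ then $\spa\{w\}$ is a one-dimensional ideal and $(\mathcal{E}/\spa\{w\})^{(n)}=\spa\{\bar v\}\neq 0$, yielding a solvable algebra of dimension $n-1$ with index of solvability at least $n+1$, contradicting the maximal bound. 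Thus $e_1^2=pv$ and $\spa\{v\}=\spa\{e_1^2\}$. Writing $\mathcal{E}^{(n-1)}=\spa\{v,u\}$, the identity $(\mathcal{E}^{(n-1)})^2=\spa\{v\}$ gives $u^2=(\mu_1^2-\mu_n^2)e_1^2+\sum_{1<i<n}\mu_i^2e_i^2\in\spa\{e_1^2\}$, and linear independence of the squares forces $\mu_i=0$ for $1<i<n$; hence $u\in I$ and $\mathcal{E}^{(n-1)}=I$ is basic.

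The main obstacle is this last case. The reduction for $m\geq 2$ is clean once the induction up the derived series is set up, but for $m=1$ one must genuinely identify $\mathcal{E}^{(n-1)}$: it is necessary to exclude both the wrong-sign configuration $e_1^2\parallel w$ (dealt with by the dimension bound on the solvability index of $\mathcal{E}/\spa\{w\}$) and the degenerate shape in which $\mathcal{E}^{(n-1)}\cong(f_1^2=f_2,\,f_2^2=0)$ would have $\spa\{v\}$ as its only line. Lemma~\ref{lem:sub_2}, bounding the number of subalgebras of $\mathcal{E}^{(n-1)}$, and the explicit square computation are precisely what force $w\in\mathcal{E}^{(n-1)}$ and thereby pin down $\mathcal{E}^{(n-1)}=\spa\{e_1,e_n\}$.
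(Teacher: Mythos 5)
Your proof is correct, but it takes a genuinely different route from the paper's. Both arguments begin identically---modularity is converted into the quasi-ideal property via Proposition~\ref{prop:equiv_mod_qi}, and the basis is normalised through Proposition~\ref{prop:cant_sub} and Remark~\ref{rem:mis}---but the paper then assumes $\mathcal{E}^{(n-1)}$ is not basic and derives a contradiction, splitting according to whether $\mathcal{E}^{(n-1)}$ has one or two one-dimensional subalgebras (Lemma~\ref{lem:sub_2}); in each case it works with an auxiliary one-dimensional subalgebra $U\nsubseteq\mathcal{E}^{(n-1)}$ chosen deepest in the derived series, and invokes Lemma~\ref{lem:ann} to contradict linear independence. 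You instead trichotomise on the support size $m$ of the generator $v$ of $\mathcal{E}^{(n)}$: for $m\geq 2$ the sign-flip elements $w_j=v-2e_j$ and the forced relations $e_j^2=\beta_jv+c_je_j$, combined with the descent $e_j\in\mathcal{E}^{(k)}\Rightarrow e_j\in\mathcal{E}^{(k+1)}$, show outright that modularity is impossible, so the implication is vacuous there; for $m=1$ the quasi-ideal identity makes $\spa\{e_1,e_n\}$ closed under products, solvability of this two-dimensional algebra forces $e_1^2$ to be proportional to $e_1+e_n$ or $e_1-e_n$, your quotient argument (the solvability index of $\mathcal{E}/\spa\{e_1-e_n\}$ would exceed the bound $\dim+1$) excludes the second option, and independence of the squares then pins down $\mathcal{E}^{(n-1)}=\spa\{e_1,e_n\}$. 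Your version is more self-contained: it bypasses Lemma~\ref{lem:ann} entirely and---despite what your closing paragraph suggests---never actually needs Lemma~\ref{lem:sub_2}, since your computation of $u^2$ already forces $u\in\spa\{e_1,e_n\}$; it also yields sharper structural information (modularity forces $m\leq1$, i.e.\ the unique dependence among the squares involves only two of them), which anticipates the block structure appearing in Theorem~\ref{th:sup_mat_der}. The paper's proof, conversely, is shorter on the page precisely because it reuses the lemmas built for this purpose. Two small slips to fix when writing this up: the fact that $e_n^2+e_1^2+\dots+e_m^2=0$ is the only linear dependence among the squares follows from Proposition~\ref{prop:caract_mis} (it is only remarked inside the proof of Lemma~\ref{lem:ann}, so cite the former), and in the $m=1$ case you should verify explicitly that $\spa\{e_1,e_n\}$ is an ideal and not merely a subalgebra---immediate, since $x(ae_1+be_n)\in\spa\{e_1^2\}\subseteq\spa\{e_1,e_n\}$ for every $x\in\mathcal{E}$, but the statement asserts basic \emph{ideal}.
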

\begin{proof}
	By Proposition~\ref{prop:cant_sub} and Remark~\ref{rem:mis}, there exists a natural basis such that $e_n^2=-e_1^2-\dots-e_m^2$, with $m\in\mathbb{N}$, $m<n$. If $\mathcal{E}$ is degenerate, then $\mathcal{E}^{(n)}=\spa\{e_n\}$ is already a basic ideal. Otherwise, suppose on the contrary that $\mathcal{E}^{(n-1)}$ is not a basic ideal. We distinguish the following two cases:
	
	\begin{enumerate}[\rm (i)]
		\item if $\mathcal{E}^{(n-1)}$ has two different subalgebras, say $V=\spa\{v\}$ and $W=\spa\{w\}$, then necessarily $m>1$. Hence, by Proposition~\ref{prop:cant_sub} and Lemma~\ref{lem:sub_2}, there exists at least a one-dimensional subalgebra, $U=\spa\{u\}$, which is not contained in $\mathcal{E}^{(n-1)}$.
		Take $U$ so that the number $l\in\mathbb{N}$ is such that $U\subset\mathcal{E}^{(l)}$ but $U\nsubseteq\mathcal{E}^{(l+1)}$ is maximum. By Proposition~\ref{prop:equiv_mod_qi}, every subalgebra is a quasi-ideal,
		then $U+V=\spa\{u,v\}$ and $U+W=\spa\{u,w\}$ are subalgebras. 
		On the one hand, as $(U+V)^2,(U+W)^2\subset\mathcal{E}^{(l+1)}$,
		$(U+V)^{(3)},(U+W)^{(3)}=0$ and the multiples of $v$ and $w$ are the only absolute nilpotent elements of $\mathcal{E}^{(l+1)}$, it holds that $0\neq uv\in\  V$ and  $0\neq u w\in W$.
		On the other hand, it holds that either $0\neq vw\in V$ or $0\neq v w\in W$.
		Consider the first case; the other is analogue. As $0\neq uv\in V$ and $0\neq vw\in V$, 
		there exist scalars $\alpha,\beta\in\mathbb{K}^*$ such that $v(\alpha u+\beta w)=0$. Then, by Lemma~\ref{lem:ann},
		it holds that $\alpha u+\beta w\in V$, which contradicts the linear independence of $\{u,v,w\}$. 
		
		\item if $\mathcal{E}^{(n-1)}$ only has one subalgebra, $\mathcal{E}^{(n)}=\spa\{v\}$, by the proof of Lemma~\ref{lem:sub_2} there exists an element $w\in\mathcal{E}$ such that $\mathcal{E}^{(n-1)}=\spa\{v,w\}$ and $v^2=0,w^2=v$ and $vw=0$. Again, we take $U$ as in the previous case, and we consider the subalgebra $\spa\{u,v\}$, where $0\neq uv\in V$. As a consequence of Lemma~\ref{lem:ann}, it holds that $\supp(w)\subset\{m+1,\dots,n-1\}$ and, since $0\neq uv$, $w^2\in \mathcal{E}^{(n)}$, there exists a contradiction with the linear independence of $\{e_1^2,\dots,e_{n-1}^2\}$.
	\end{enumerate}
	As in both cases we get a contradiction, the result follows.
\end{proof}
\begin{remark}
	Notice that the previous necessary condition for modularity is not equivalent to the fact that there exists a reordering of the natural basis such that $e_n^2=-e_1^2$. Let $\mathcal{E}$ be an evolution algebra with natural basis $\{e_1,e_2,e_3\}$ and product given by $e_1^2=-e_3^2=e_2$ and $e_2^2=e_1+e_3$. In fact, $e_3^2=-e_1^2$ but $\mathcal{E}^2=\spa\{e_2,e_1+e_3\}$, which is not a basic ideal.
\end{remark}
\begin{corollary}\label{cor:cond_nec_mod}
	Let $\mathcal{E}$ be a solvable evolution algebra with maximum index of solvability over any field of characteristic different from $2$. If $\mathcal{E}$ is modular, then the derived series does not have two consecutive terms that are not basic ideals.
\end{corollary}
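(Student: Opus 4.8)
The plan is to bootstrap the bottom-of-the-series statement of Proposition~\ref{prop:cond_nec_mod} all the way up the derived series by a minimal-counterexample argument combined with quotients. Since $\mathcal{E}$ has maximum index of solvability, by~\eqref{dim} its derived series $\mathcal{E}=\mathcal{E}^{(1)}\supsetneq\dots\supsetneq\mathcal{E}^{(n)}\supsetneq\mathcal{E}^{(n+1)}=0$ drops dimension by exactly one at each step, so $\dim\mathcal{E}^{(j)}=n-j+1$ for $1\le j\le n+1$. Suppose, for contradiction, that some pair of consecutive terms fails to consist of basic ideals, and among all such pairs choose $\bigl(\mathcal{E}^{(k)},\mathcal{E}^{(k+1)}\bigr)$ with $k$ \emph{maximal}. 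Proposition~\ref{prop:cond_nec_mod} guarantees that at least one of $\mathcal{E}^{(n-1)},\mathcal{E}^{(n)}$ is a basic ideal, so the bottom pair is never a counterexample and therefore $k\le n-2$; in particular $\mathcal{E}^{(k+2)}$ is still a nonzero term of the derived series. By the maximality of $k$, the pair $\bigl(\mathcal{E}^{(k+1)},\mathcal{E}^{(k+2)}\bigr)$ is not a counterexample; as $\mathcal{E}^{(k+1)}$ is not a basic ideal, $\mathcal{E}^{(k+2)}$ must be one.

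I would then pass to the quotient $\mathcal{Q}=\mathcal{E}/\mathcal{E}^{(k+2)}$. Since $\mathcal{E}^{(k+2)}$ is a basic ideal it is in particular an ideal, so Lemma~\ref{lem:der_ser} applies and shows that $\mathcal{Q}$ is an evolution algebra whose derived series is $\mathcal{Q}^{(l)}=\mathcal{E}^{(l)}/\mathcal{E}^{(k+2)}$ with index of solvability $k+2$. As $\dim\mathcal{Q}=n-\dim\mathcal{E}^{(k+2)}=k+1$, the algebra $\mathcal{Q}$ has \emph{maximum} index of solvability. Moreover, by the correspondence theorem the subalgebra lattice of $\mathcal{Q}$ is isomorphic to the interval $[\mathcal{E}^{(k+2)},\mathcal{E}]$ of the subalgebra lattice of $\mathcal{E}$; an interval of a modular lattice is modular, so $\mathcal{Q}$ is modular. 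Applying Proposition~\ref{prop:cond_nec_mod} to $\mathcal{Q}$ (of dimension $k+1$) yields that $\mathcal{Q}^{(k)}=\mathcal{E}^{(k)}/\mathcal{E}^{(k+2)}$ or $\mathcal{Q}^{(k+1)}=\mathcal{E}^{(k+1)}/\mathcal{E}^{(k+2)}$ is a basic ideal of $\mathcal{Q}$.

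Finally, I would lift this conclusion back to $\mathcal{E}$. Since $\mathcal{E}^{(k+2)}$ is a basic ideal contained (strictly, by~\eqref{dim}) in both $\mathcal{E}^{(k+1)}$ and $\mathcal{E}^{(k)}$, Remark~\ref{rem:quo_bas_i} converts ``$U/\mathcal{E}^{(k+2)}$ is a basic ideal of $\mathcal{Q}$'' into ``$U$ is a basic ideal of $\mathcal{E}$'' for $U\in\{\mathcal{E}^{(k)},\mathcal{E}^{(k+1)}\}$. Hence $\mathcal{E}^{(k)}$ or $\mathcal{E}^{(k+1)}$ is a basic ideal of $\mathcal{E}$, contradicting the choice of the pair, and this contradiction proves the claim. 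The main points requiring care outside the cited results are the descent of modularity to the quotient, which rests on the standard lattice-theoretic fact that $\mathrm{Sub}(\mathcal{E}/I)\cong[I,\mathcal{E}]$ is an interval of $\mathrm{Sub}(\mathcal{E})$, and the bookkeeping that the quotient again attains \emph{maximum} index of solvability, so that Proposition~\ref{prop:cond_nec_mod} is genuinely applicable to $\mathcal{Q}$.
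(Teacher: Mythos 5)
Your proposal is correct and takes essentially the same approach as the paper: the paper's proof is a one-liner citing closure of modularity under quotients together with Proposition~\ref{prop:cond_nec_mod}, Lemma~\ref{lem:der_ser} and Remark~\ref{rem:quo_bas_i}, and your argument is exactly that bootstrapping spelled out. Your maximal-counterexample organization, the check that the quotient again has maximum index of solvability, and the interval-of-a-modular-lattice justification simply make explicit what the paper dismisses as ``straightforward.''
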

\begin{proof}
	As modularity is closed under quotients, the result follows straightforward from Proposition~\ref{prop:cond_nec_mod}, Lemma~\ref{lem:der_ser} and Remark~\ref{rem:quo_bas_i}.
\end{proof}

\begin{remark}
	The converse of Corollary~\ref{cor:cond_nec_mod} is not necessarily true. Let $\mathcal{E}$ be an evolution algebra over a field of characteristic different from $2$ with natural basis $\{e_1,e_2,e_3,e_4\}$ and product given by $e_1^2=-e_2^2=e_1+e_2$, $e_3^2=e_1+e_3+e_4$ and $e_4^2=-e_2+e_3+e_4$. In fact, it is solvable with maximum index of solvability since $\mathcal{E}^2=\spa\{e_1,e_2,e_3+e_4\}$, $ \mathcal{E}^{(3)}=\spa\{e_1,e_2\}$, $\mathcal{E}^{(4)}=\spa\{e_1+e_2\}$ and $\mathcal{E}^{(5)}=0$. Moreover, notice that every other term of the derived series is a basic ideal. However, $\mathcal{E}_1=\spa\{e_1-e_2,e_3+e_4\}$ and $\mathcal{E}_2=\spa\{e_1-e_2,e_3-e_4\}$ are not quasi-ideals since
	\[\langle\mathcal{E}_1,\mathcal{E}_2\rangle=\spa\{e_1,e_2,e_3,e_4\}\neq\spa\{e_1-e_2,e_3,e_4\}=\mathcal{E}_1+\mathcal{E}_2.\]
\end{remark}

The next result  proves that this necessary condition is equivalent to supersolvability. Then, although it is not a sufficient condition for modularity, by Proposition~\ref{prop:ls}, it is indeed sufficient for lower semimodularity.  

\begin{theorem}\label{th:sup_mat_der}
	Let $\mathcal{E}$ be a solvable $n$-dimensional evolution algebra with maximum index of solvability over any field of characteristic different from $2$. Then, the following statements are equivalent:
	\begin{enumerate}[\rm (i)]
		\item $\mathcal{E}$ is supersolvable;
		\item there exists a natural basis $B$ such that the structure matrix is block lower triangular with zeros or blocks $\bigl(\begin{smallmatrix*}[r]1 & 1\\ -1 & -1\end{smallmatrix*}\bigr)$ in its diagonal and rank $n-1$; and
		\item  the derived series does not have two consecutive terms that are not basic ideals.
	\end{enumerate}
	\begin{proof}
		Before starting the proof, just notice that the structure matrix of a quotient by a basic ideal can be seen as the result of removing the corresponding rows and columns in the original one. Then, by Lemma~\ref{lem:der_ser}, item (ii) is preserved under quotients by basic ideals.
		
		(i)$\implies$(ii). As $\mathcal{E}$ has maximum index of solvability and is supersolvable, by Theorem~\ref{th:ssolv}, we have that $\mathcal{E}$ is degenerate or there exists a basic ideal isomorphic to $\mathcal{E}_2(1,-1)$. Then, the result follows from the fact that supersolvability is closed under quotients.
		
		(ii)$\implies$(iii). We will use induction on $\dim{\mathcal{E}}$. Let $M_B(\mathcal{E})=(a_{ij})$ be a structure matrix as described. 
		First, if $a_{11}=0$, then there clearly exists a basic ideal $\mathcal{E}^{(n)}=\spa\{e_1\}$. Otherwise, the multiples of $e_1+e_2$ and $e_1-e_2$ are the only absolute nilpotent elements. Then, we claim that $\mathcal{E}^{(n-1)}=\spa\{e_1+e_2\}$. On the contrary, assume that either $e_1+e_2\notin\mathcal{E}^{(n-1)}$ or $e_1-e_2\notin\mathcal{E}^{(n-1)}$.
		Suppose, without loss of generality, that $e_1-e_2\notin\mathcal{E}^{(n-1)}$.
		Consequently, $\mathcal{E}^{(n-1)}$ only has a one-dimensional subalgebra, $\mathcal{E}^{(n)}=\spa\{e_1+e_2\}$.
		Then, reasoning as in the proof of Proposition~\ref{prop:cond_nec_mod}, we get a contradiction with the fact that $\rank\big(M_B(\mathcal{E})\big)=n-1$. Then, we have just proved that $\mathcal{E}^{(n)}$ or $\mathcal{E}^{(n-1)}$ is a basic ideal.
		Denote by $I$ this basic ideal. Next, by Lemma~\ref{lem:der_ser}, 
		the induction hypothesis can be applied, and  the derived series of $\mathcal{E}/I$ 
		does not have two consecutive terms that are not basic ideals. 
		Finally, using Remark~\ref{rem:quo_bas_i}, the result follows.
		
		(iii)$\implies$(i).     
		We will again use induction on $\dim{\mathcal{E}}$. By hypothesis, $\mathcal{E}^{(n)}$ or $\mathcal{E}^{(n-1)}$ is a basic ideal. Suppose that $\mathcal{E}^{(n)}$ is basic. Then, consider the quotient $\mathcal{E}/\mathcal{E}^{(n)}$, which is a solvable evolution algebra with maximum index of solvability by Lemma~\ref{lem:der_ser}. Then, by the induction hypothesis, $\mathcal{E}/\mathcal{E}^{(n)}$ is supersolvable. As $\mathcal{E}^{(n)}$ is a one-dimensional ideal, $\mathcal{E}$ is clearly supersolvable. Otherwise, reasoning the same $\mathcal{E}/\mathcal{E}^{(n-1)}$ is supersolvable;
		the result follows as  $\mathcal{E}^{(n-1)}$ is supersolvable by Lemma~\ref{lem:clas_solv_anyfield}.     
	\end{proof}
\end{theorem}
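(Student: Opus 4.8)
The plan is to run the cycle (i)$\Rightarrow$(ii)$\Rightarrow$(iii)$\Rightarrow$(i) by induction on $\dim\mathcal{E}$, and the first thing I would record are the stability properties that make the induction possible: by Lemma~\ref{lem:der_ser} a quotient by a term of the derived series again has maximum index of solvability, by Remark~\ref{rem:quo_bas_i} being a basic ideal passes to and from quotients, and passing to a quotient by a basic ideal simply deletes the corresponding rows and columns of $M_B(\mathcal{E})$, so the block lower-triangular shape of~(ii) is inherited. Throughout I would keep the normalised basis of Remark~\ref{rem:mis}, in which $e_n^2=-e_1^2-\dots-e_m^2$ is the unique relation among the squares, so that $\rank M_B(\mathcal{E})=n-1$ is automatic and the absolute nilpotent elements are exactly the $\pm e_1\pm\dots\pm e_m\pm e_n$.

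For (i)$\Rightarrow$(ii): since $\mathcal{E}$ is supersolvable it has a one-dimensional ideal, so by Theorem~\ref{th:ssolv} either $\mathcal{E}$ is degenerate---giving an absolute nilpotent basis vector and hence a one-dimensional basic ideal, i.e.\ a zero on the diagonal---or $\mathcal{E}$ contains a basic ideal isomorphic to some $\mathcal{E}_k(\lambda_1,\dots,\lambda_k)$. The codimension constraint $\codim\mathcal{E}^2=1$ rules out $k\geq3$, because $k$ collinear squares $e_i^2$ would force $\dim\mathcal{E}^2\leq n-2$; hence $k=2$ and, after normalising by Lemma~\ref{lem:clas_solv_anyfield}, this basic ideal contributes the block $\bigl(\begin{smallmatrix*}[r]1&1\\-1&-1\end{smallmatrix*}\bigr)$. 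Quotienting by this ideal (supersolvability is closed under quotients, and maximum index of solvability is preserved, cf.\ Lemma~\ref{lem:der_ser}) and applying the induction hypothesis puts the quotient in the form~(ii); reinstating the deleted block yields the matrix for $\mathcal{E}$.

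For (iii)$\Rightarrow$(i): condition~(iii) guarantees that $\mathcal{E}^{(n)}$ or $\mathcal{E}^{(n-1)}$ is a basic ideal. If $\mathcal{E}^{(n)}$ is basic I would quotient by this one-dimensional ideal, obtain a supersolvable quotient by induction, and lift its complete flag by prepending $\mathcal{E}^{(n)}$. If instead only $\mathcal{E}^{(n-1)}$ is basic, I would quotient by it; since $\mathcal{E}^{(n-1)}$ is a two-dimensional solvable evolution algebra it is itself supersolvable by Lemma~\ref{lem:clas_solv_anyfield}, and splicing a complete flag of $\mathcal{E}^{(n-1)}$ below the (inductively supersolvable) flag of $\mathcal{E}/\mathcal{E}^{(n-1)}$ exhibits $\mathcal{E}$ as supersolvable.

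The hard part will be (ii)$\Rightarrow$(iii), precisely in the subcase where the leading diagonal block is $\bigl(\begin{smallmatrix*}[r]1&1\\-1&-1\end{smallmatrix*}\bigr)$ rather than a zero. Here both $e_1+e_2$ and $e_1-e_2$ are absolute nilpotent, so a priori either could lie at the bottom of the derived series, and I must show that $\mathcal{E}^{(n)}$ (equivalently $\mathcal{E}^{(n-1)}$) really is basic. The delicate configuration to exclude is that $\mathcal{E}^{(n-1)}$ carries a single one-dimensional subalgebra that is out of phase with the chosen basis; I expect to rule this out exactly as in Proposition~\ref{prop:cond_nec_mod}, deriving a contradiction with $\rank M_B(\mathcal{E})=n-1$ through the annihilator computation of Lemma~\ref{lem:ann}. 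Once a derived-series term is known to be basic, the stability facts above let me quotient and close the induction.
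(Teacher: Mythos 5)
Your proposal is correct and follows essentially the same route as the paper's own proof: the same preliminary stability facts (quotients by basic ideals delete rows and columns, Remark~\ref{rem:quo_bas_i}, Lemma~\ref{lem:der_ser}), the same use of Theorem~\ref{th:ssolv} for (i)$\Rightarrow$(ii), the same case split on the leading diagonal block with the same rank-$(n-1)$ contradiction via the argument of Proposition~\ref{prop:cond_nec_mod} and Lemma~\ref{lem:ann} for (ii)$\Rightarrow$(iii), and the identical quotient-and-splice induction for (iii)$\Rightarrow$(i). The only difference is cosmetic: you make explicit, via $\codim{\mathcal{E}^2}=1$, why the basic ideal supplied by Theorem~\ref{th:ssolv} must be $\mathcal{E}_2(1,-1)$ rather than some $\mathcal{E}_k$ with $k\geq3$, a point the paper leaves implicit.
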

For our last result, we will suppose, without loss of generality, that the structure matrix $(a_{ij})_{i,j=1}^n$ of a modular evolution algebra with maximum index of solvability over any field of characteristic different from $2$ is as described
in Theorem~\ref{th:sup_mat_der}. Moreover, define $\varLambda$ as the subset of pairs $(i,i+1)$, which corresponds with the blocks $\bigl(\begin{smallmatrix*}[r]1 & 1\\ -1 & -1\end{smallmatrix*}\bigr)$ in the structure matrix, that is,
\[
\varLambda=\left\{(i,i+1)\colon\bigl(\begin{smallmatrix*}[l] a_{ii} & a_{i(i+1)}\\ a_{(i+1)i} & a_{(i+1)(i+1)}\end{smallmatrix*}\bigr)=\bigl(\begin{smallmatrix*}[r]1 & 1\\ -1 & -1\end{smallmatrix*}\bigr)\right\}.
\]
Indeed, as reasoned in the proof of Theorem~\ref{th:clas_mod} it is easy to check that all subalgebras can be written as $K+\spa\{e_{k_1}-e_{k_1+1},e_{k_2}-e_{k_2+1},\dots,e_{k_l}\pm e_{k_l+1}\}$ for certain $k_1<\dots<k_l$, $(k_1,k_1+1),(k_2,k_2+1),\dots,(k_l,k_l+1)\in\varLambda$ and $K$ is a subspace spanned by elements of the natural basis.
In addition, also denote by $\pi_{ij}$ the linear projection to the subspace $\spa\{e_i,e_j\}$ for any $i\neq j$, $i,j=1,\dots,n$. 
\begin{corollary}
	Let $\mathcal{E}$ be a solvable evolution algebra with maximum index of solvability over any field of characteristic different from $2$. Then, $\mathcal{E}$ is modular if and only if the derived series does not have two consecutive terms that are not basic ideals and there does not exist a subalgebra $U=K+\spa\{e_i-e_{i+1},e_j\pm e_{j+1}\}$ where $i<j$, $(i,i+1),(j,j+1)\in\varLambda$ and $K$ is a subspace spanned by elements of the natural basis different from $e_i$, $e_{i+1}$, $e_j$ and $e_{j+1}$; and such that $\pi_{i(i+1)}(e_j)$ or $\pi_{i(i+1)}(e_{j+1})\notin\langle e_i-e_{i+1}\rangle$.
\end{corollary}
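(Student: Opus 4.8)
The plan is to convert modularity into the quasi-ideal condition and then read the obstruction off the normal form prepared just before the statement. By Proposition~\ref{prop:equiv_mod_qi}, $\mathcal{E}$ is modular if and only if every subalgebra is a quasi-ideal, so the task reduces to deciding when $\langle U,V\rangle=U+V$ for all subalgebras $U,V$. The first displayed condition is exactly supersolvability: Corollary~\ref{cor:cond_nec_mod} shows that modularity forces the derived series to have no two consecutive non-basic terms, and Theorem~\ref{th:sup_mat_der} shows this is equivalent to supersolvability and to the existence of a natural basis in which $M_B(\mathcal{E})$ is block lower triangular with diagonal blocks $0$ or $\bigl(\begin{smallmatrix*}[r]1&1\\-1&-1\end{smallmatrix*}\bigr)$ and rank $n-1$. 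I would fix such a basis and work throughout with the description recalled before the statement: every subalgebra is $K+\spa\{e_{k_1}-e_{k_1+1},\dots,e_{k_l}\pm e_{k_l+1}\}$ with all pairs in $\varLambda$ and $K$ spanned by basis vectors.

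The computational engine is the product rule $xy=\sum_k x_ky_k\,e_k^2$, so that a product depends only on the coordinatewise product of its factors. Two facts follow. First, for a pair $(a,a+1)\in\varLambda$ one has $(e_a-e_{a+1})^2=(e_a+e_{a+1})^2=e_a^2+e_{a+1}^2\in\spa\{e_1,\dots,e_{a-1}\}$, so such a pair contributes only its difference to a subalgebra unless the pair gets split. Second, $\langle U,V\rangle$ can strictly exceed $U+V$ only when, for some $(j,j+1)\in\varLambda$, the join $U+V$ acquires both $e_j+e_{j+1}$ and $e_j-e_{j+1}$, hence $e_j$ and $e_{j+1}$ separately; then the true squares $e_j^2,e_{j+1}^2$ enter $\langle U,V\rangle$ whereas $U+V$ contained only their sum. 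Making this precise — by projecting onto one block at a time and reducing any escaping product to this single splitting phenomenon — is what converts the abstract quasi-ideal condition into the concrete projection criterion of the statement.

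For the forward implication I would argue by contraposition, assuming a bad subalgebra $U=K+\spa\{e_i-e_{i+1},e_j\pm e_{j+1}\}$ with, say, $\pi_{i(i+1)}(e_j^2)\notin\langle e_i-e_{i+1}\rangle$ (the $e_{j+1}^2$ case is symmetric). Since $U$ is a subalgebra, $\pi_{i(i+1)}(e_j^2+e_{j+1}^2)\in\langle e_i-e_{i+1}\rangle$, so building the subalgebra $V=\langle e_j-e_{j+1}\rangle$ feeds only paired differences into strictly lower blocks and keeps $(U+V)\cap\spa\{e_i,e_{i+1}\}=\langle e_i-e_{i+1}\rangle$. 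Then $e_j,e_{j+1}\in U+V$ force $e_j^2\in\langle U,V\rangle$, while $\pi_{i(i+1)}(e_j^2)\notin\langle e_i-e_{i+1}\rangle$ gives $e_j^2\notin U+V$; thus $U$ is not a quasi-ideal and $\mathcal{E}$ is not modular by Proposition~\ref{prop:equiv_mod_qi}. For the converse I would assume both conditions and show $\langle U,V\rangle=U+V$ directly: condition one fixes the normal form, and whenever the join splits a pair $(j,j+1)$ the squares $e_j^2,e_{j+1}^2$ must land in $U+V$, which is exactly what the absence of a bad subalgebra enforces block by block; supersolvability additionally delivers lower semimodularity via Proposition~\ref{prop:ls}.

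The step I expect to be the main obstacle is controlling the generated subalgebra in the forward direction: verifying that closing $V=\langle e_j-e_{j+1}\rangle$ under products — which repeatedly produces squares living in strictly lower-indexed blocks — never introduces $e_i$ or $e_{i+1}$ separately through an indirect chain, so that its $\spa\{e_i,e_{i+1}\}$-projection genuinely stays inside $\langle e_i-e_{i+1}\rangle$. I would handle this by downward induction on the block index, using at each stage that $U$ is a subalgebra to force every $\spa\{e_i,e_{i+1}\}$-contribution to be aligned with $e_i-e_{i+1}$, so that the misalignment singled out by the projection criterion is the unique point where closure fails.
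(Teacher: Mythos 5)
Your proposal is correct and shares the paper's overall skeleton (Proposition~\ref{prop:equiv_mod_qi} to reduce modularity to the quasi-ideal property, Corollary~\ref{cor:cond_nec_mod} together with Theorem~\ref{th:sup_mat_der} to turn the first condition into the block normal form, then a witness construction for necessity and a quasi-ideal check for sufficiency), but it genuinely differs at the decisive step, and in a way that is arguably more robust than the paper. For necessity the paper takes as witness the sign-flipped companion $V'=K+\spa\{e_i-e_{i+1},e_j-e_{j+1}\}$ and asserts without proof that it ``is also closed under the product''; that assertion is precisely the delicate point, since for $e_k\in K$ one only knows $e_k^2\in U$, whose $(j,j+1)$-component is proportional to $e_j+e_{j+1}$, whereas membership in $V'$ would force $\pi_{j(j+1)}(e_k^2)=0$ --- something that does not follow formally from $U$ being a subalgebra (if it holds at all, it needs the rank condition, which the paper never invokes here). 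Your witness $V=\langle e_j-e_{j+1}\rangle$ is a subalgebra by construction, and the ``main obstacle'' you flag dissolves in one step, with no downward induction: $(e_j-e_{j+1})^2=(e_j+e_{j+1})^2\in U$, and since this element is supported in indices $<j$ (a set of elements closed under multiplication, by the block triangular form), everything it generates stays in $U$; hence $V\subseteq\spa\{e_j-e_{j+1}\}+U$, so $\pi_{i(i+1)}(U+V)\subseteq\langle e_i-e_{i+1}\rangle$, while $e_j,e_{j+1}\in\langle U,V\rangle$ gives $e_j^2\in\langle U,V\rangle$ with $\pi_{i(i+1)}(e_j^2)\notin\langle e_i-e_{i+1}\rangle$. (Both you and the paper must read the statement's $\pi_{i(i+1)}(e_j)$ as $\pi_{i(i+1)}(e_j^2)$ --- the literal reading is vacuous --- and your use of ``$U$ is a subalgebra'' correctly upgrades the ``or'' in the hypothesis to ``both'', since $(e_j\pm e_{j+1})^2\in U$.) The one respect in which your proposal is no stronger than the paper is sufficiency: your claim that a join can exceed a sum only by splitting a pair, to be verified ``block by block'', is left unproved; but the paper's own sufficiency argument is equally terse (its asserted normal form $\spa\{e_1,\dots,e_r,e_{r+1}\pm e_{r+2}\}$, etc., does not even cover subalgebras such as $\spa\{e_1-e_2,e_3+e_4\}$ appearing in the paper's final example), so this is a shared, fillable gap rather than a defect of your route.
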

\begin{proof}
	First, we prove the sufficiency. By hypothesis, every subalgebra of $\mathcal{E}$ can be written as $\spa\{e_1,\dots,e_r,e_{r+1}\pm e_{r+2}\}$ or $\spa\{e_1,\dots,e_r,e_{r+1}-e_{r+2},e_{r+3},\dots,e_m\}$ for some $r,m\geq0$. Then, as all these subalgebras are quasi-ideals, $\mathcal{E}$ is modular.
	
	For the necessity, as a consequence of Proposition~\ref{prop:cond_nec_mod}, we just need to prove that if there exists a subalgebra, as stated, then $\mathcal{E}$ is not modular. Without loss of generality, suppose that $U=K+\spa\{e_i-e_{i+1},e_j+ e_{j+1}\}$, with $i<j$, is a subalgebra. Then $K+\spa\{e_i-e_{i+1},e_j- e_{j+1}\}$ is also closed under the product and, consequently a subalgebra. Nevertheless, as $\pi_{i(i+1)}(e_j)$, $\pi_{i(i+1)}(e_{j+1})\notin\langle e_i-e_{i+1}\rangle$, we have that 
	\begin{align*}
		\langle U,K+\spa\{e_i-e_{i+1},e_j- e_{j+1}\}\rangle&=K+\spa\{e_i,e_{i+1},e_j,e_{j+1}\}\\&\neq
		K+\spa\{e_i-e_{i+1},e_j,e_{j+1}\}\\&=U+\big(K+\spa\{e_i-e_{i+1},e_j- e_{j+1}\}\big).
	\end{align*}
	Since $U$ is not a quasi-ideal, $\mathcal{E}$ is not modular.
\end{proof}
\begin{remark}
	Notice that we cannot omit that $\pi_{i(i+1)}(e_j)$ or $\pi_{i(i+1)}(e_{j+1})\notin\langle e_i-e_{i+1}\rangle$. Let $\mathcal{E}$ be an evolution algebra with natural basis $\{e_1,e_2,e_3,e_4\}$ and product given by $e_1^2=-e_2^2=e_1+e_2$, $e_3^2=e_1-e_2+e_3+e_4$ and $e_4^2=e_1-e_2-e_3-e_4$. It is easy to check that $\mathcal{E}$ is solvable with maximum index of solvability and that its subalgebras are all quasi-ideals. Consequently, $\mathcal{E}$ is modular.
	
	\begin{minipage}{0.7\textwidth}\hspace{-0.4cm}
			\begin{tabular}{| l | l | l |}
				\hline
				\textbf{Subalg. of dim. $1$} & \textbf{Subalg. of dim. $2$} & \textbf{Subalg. of dim. $3$}\\
				\hline
				$\spa\{e_1+e_2\}$  & $\spa\{e_1,e_2\}$ & $\spa\{e_1,e_2,e_3+e_4\}$\\
				$\spa\{e_1-e_2\}$   & $\spa\{e_1-e_2,e_3+e_4\}$ & $\spa\{e_1,e_2,e_3-e_4\}$\\
				& $\spa\{e_1-e_2,e_3-e_4\}$ &$\spa\{e_1-e_2,e_3,e_4\}$\\
				\hline
			\end{tabular}
	\end{minipage}
	\begin{minipage}{0.25\textwidth}%\vspace{0.5cm}
		\begin{center}	\hspace{1.2cm}
			\begin{tikzpicture}[scale=0.6]
				\draw[thick] (0,-1.5) -- (-0.5,-0.5);
				\draw[thick] (-1,0.5) -- (-0.5,-0.5);
				\draw[thick] (0,-1.5) -- (0.5,-0.5);
				\draw[thick] (1,0.5) -- (0.5,-0.5);
				\draw[thick] (0,0.5) -- (0.5,-0.5);
				\draw[thick] (-1,0.5) -- (0.5,-0.5);
				\draw[thick] (0,1.5) -- (-1,0.5);
				\draw[thick] (1,0.5) -- (0,1.5);
				\draw[thick] (0,2.5) -- (0,1.5);
				\draw[thick] (-1,0.5) -- (-1,1.5);
				\draw[thick] (0,2.5) -- (-1,1.5);
				\draw[thick] (0,0.5) -- (-1,1.5);
				\draw[thick] (0,0.5) -- (1,1.5);
				\draw[thick] (0,2.5) -- (1,1.5);
				\draw[thick] (1,0.5) -- (1,1.5);
				\draw[fill=white,thick=black](0.5,-0.5) circle [radius=0.15];
				\draw[fill=white,thick=black](-0.5,-0.5) circle [radius=0.15];
				\draw[fill=white,thick=black] (0,-1.5) circle [radius=0.15];
				\draw[fill=white,thick=black] (1,0.5) circle [radius=0.15];
				\draw[fill=white,thick=black] (0,0.5) circle [radius=0.15];
				\draw[fill=white,thick=black] (-1,0.5) circle [radius=0.15];
				\draw[fill=white,thick=black] (1,1.5) circle [radius=0.15];
				\draw[fill=white,thick=black] (0,1.5) circle [radius=0.15];
				\draw[fill=white,thick=black] (-1,1.5) circle [radius=0.15];
				\draw[fill=white,thick=black] (0,2.5) circle [radius=0.15];
			\end{tikzpicture}
		\end{center}
	\end{minipage}
\end{remark}

\section*{Acknowledgments}
This work was partially supported by Agencia Estatal de Investigaci\'on (Spain),
grant PID2020-115155GB-I00 (European FEDER support included, UE) and
by Xunta de Galicia through the Competitive Reference Groups (GRC), ED431C
2023/31.

The third author was also supported by FPU21/05685 scholarship, Ministerio de Educaci\'on y Formaci\'on Profesional (Spain).


\begin{thebibliography}{10}
	\expandafter\ifx\csname url\endcsname\relax
	\def\url#1{\texttt{#1}}\fi
	\expandafter\ifx\csname urlprefix\endcsname\relax\def\urlprefix{URL }\fi
	
	\bibitem{An_94}
	J.~A. Anquela, On {H}erstein's theorems relating modularity in {$A$} and
	{$A^{(+)}$}, J. Algebra 163~(1) (1994) 207--218.
	\newline\urlprefix\url{https://doi.org/10.1006/jabr.1994.1013}
	
	\bibitem{Ba_67}
	D.~W. Barnes, On the cohomology of soluble {L}ie algebras, Math. Z. 101 (1967)
	343--349.
	\newline\urlprefix\url{https://doi.org/10.1007/BF01109799}
	
	\bibitem{Br_76}
	H.~H. Brungs, Rings with a distributive lattice of right ideals, J. Algebra
	40~(2) (1976) 392--400.
	\newline\urlprefix\url{https://doi.org/10.1016/0021-8693(76)90203-9}
	
	\bibitem{Ca_16}
	Y.~Cabrera~Casado, Evolution algebras, Ph.D. thesis, Universidad de M\'alaga
	(2016).
	
	\bibitem{CGOT_13}
	L.~M. Camacho, J.~R. G\'omez, B.~A. Omirov, R.~M. Turdibaev, Some properties of
	evolution algebras, Bull. Korean Math. Soc. 50~(5) (2013) 1481--1494.
	\newline\urlprefix\url{https://doi.org/10.4134/BKMS.2013.50.5.1481}
	
	\bibitem{Ca_75}
	V.~Camillo, Distributive modules, J. Algebra 36~(1) (1975) 16--25.
	\newline\urlprefix\url{https://doi.org/10.1016/0021-8693(75)90151-9}
	
	\bibitem{CLOR14}
	J.~M. Casas, M.~Ladra, B.~A. Omirov, U.~A. Rozikov, On evolution algebras,
	Algebra Colloq. 21~(2) (2014) 331--342.
	\newline\urlprefix\url{https://doi.org/10.1142/S1005386714000285}
	
	\bibitem{CLTV_21}
	C.~Costoya, P.~Ligouras, A.~Tocino, A.~Viruel, Regular evolution algebras are
	universally finite, Proc. Amer. Math. Soc. 150~(3) (2022) 919--925.
	\newline\urlprefix\url{https://doi.org/10.1090/proc/15648}
	
	\bibitem{CMTV_23}
	C.~Costoya, V.~Mu\~noz, A.~Tocino, A.~Viruel, Automorphism groups of {C}ayley
	evolution algebras, Rev. R. Acad. Cienc. Exactas F\'is. Nat. Ser. A Mat.
	RACSAM 117~(2) (2023) Paper No. 82, 11.
	\newline\urlprefix\url{https://doi.org/10.1007/s13398-023-01414-w}
	
	\bibitem{EL_15}
	A.~Elduque, A.~Labra, Evolution algebras and graphs, J. Algebra Appl. 14~(7)
	(2015) 1550103, 10.
	\newline\urlprefix\url{https://doi.org/10.1142/S0219498815501030}
	
	\bibitem{EL_21}
	A.~Elduque, A.~Labra, Evolution algebras, automorphisms, and graphs, Linear
	Multilinear Algebra 69~(2) (2021) 331--342.
	\newline\urlprefix\url{https://doi.org/10.1080/03081087.2019.1598931}
	
	\bibitem{FGN_22}
	D.~Fern\'andez-Ternero, V.~M. G\'omez-Sousa, J.~N\'u\~nez Vald\'es, Dealing
	with the resolubility of evolution algebras, Math. Comput. Sci. 16~(1) (2022)
	Paper No. 4, 8.
	\newline\urlprefix\url{https://doi.org/10.1007/s11786-022-00524-9}
	
	\bibitem{Ge_76}
	A.~G. Ge{\u{\i}}n, Semimodular {L}ie algebras, Sib. Math. J. 17~(2) (1976)
	189--193.
	
	\bibitem{Gr_98}
	G.~Gr\"atzer, General lattice theory, 2nd ed., Birkh\"auser Verlag, Basel,
	1998.
	
	\bibitem{Ja_57}
	J.~P. Jans, On the indecomposable representations of algebras, Ann. of Math.
	(2) 66 (1957) 418--429.
	\newline\urlprefix\url{https://doi.org/10.2307/1969899}
	
	\bibitem{Ko_65}
	B.~Kolman, Semi-modular {L}ie algebras, J. Sci. Hiroshima Univ. Ser. A-I Math.
	29 (1965) 149--163.
	
	\bibitem{MPS_21}
	N.~Maletesta, P.~P\'aez-Guill\'an, S.~Siciliano, Restricted {L}ie algebras
	having a distributive lattice of restricted subalgebras, Linear Multilinear
	Algebra 69~(16) (2021) 3112--3120.
	\newline\urlprefix\url{https://doi.org/10.1080/03081087.2019.1708238}
	
	\bibitem{MS_00}
	C.~Mart\'inez, J.~Set\'o, Bernstein algebras whose lattice of ideals is
	distributive, in: Nonassociative algebra and its applications ({S}\~ao
	{P}aulo, 1998), vol. 211 of Lecture Notes in Pure and Appl. Math., Dekker,
	New York, 2000, pp. 357--364.
	
	\bibitem{PST_23}
	P.~P\'aez-Guill\'an, S.~Siciliano, D.~A. Towers, On the subalgebra lattice of a
	restricted {L}ie algebra, Linear Algebra Appl. 660 (2023) 47--65.
	\newline\urlprefix\url{https://doi.org/10.1016/j.laa.2022.12.004}
	
	\bibitem{Pa_21}
	I.~Paniello, Markov evolution algebras, Linear Multilinear Algebra 70~(19)
	(2022) 4633--4653.
	\newline\urlprefix\url{https://doi.org/10.1080/03081087.2021.1893636}
	
	\bibitem{Sc_94}
	R.~Schmidt, Subgroup lattices of groups, vol.~14 of De Gruyter Expositions in
	Mathematics, Walter de Gruyter \& Co., Berlin, 1994.
	\newline\urlprefix\url{https://doi.org/10.1515/9783110868647}
	
	\bibitem{ST_22}
	S.~Siciliano, D.~A. Towers, On the subalgebra lattice of a {L}eibniz algebra,
	Comm. Algebra 50~(1) (2022) 255--267.
	\newline\urlprefix\url{https://doi.org/10.1080/00927872.2021.1956510}
	
	\bibitem{Tian_08}
	J.~P. Tian, Evolution algebras and their applications, vol. 1921 of Lecture
	Notes in Mathematics, Springer, Berlin, 2008.
	\newline\urlprefix\url{https://doi.org/10.1007/978-3-540-74284-5}
	
	\bibitem{TV_06}
	J.~P. Tian, P.~Vojt{\v{e}}chovsk{\'y}, Mathematical concepts of evolution
	algebras in non-{M}endelian genetics, Quasigroups Related Systems 14~(1)
	(2006) 111--122.
	
	\bibitem{To_86}
	D.~A. Towers, Semi-modular subalgebras of a {L}ie algebra, J. Algebra 103~(1)
	(1986) 202--207.
	\newline\urlprefix\url{https://doi.org/10.1016/0021-8693(86)90178-X}
	
\end{thebibliography}
\end{document}